\documentclass[12pt]{amsart} 

\usepackage[margin=3cm]{geometry}
\usepackage[english]{babel}
\usepackage{hyperref}
\usepackage{amsmath}
\usepackage{amsthm}
\usepackage{amssymb}
\usepackage{ucs}
\usepackage{enumerate}
\usepackage[utf8]{inputenc}
\usepackage[T1]{fontenc}
\usepackage{color}
\usepackage{tikz}
\usepackage{diagbox}
\usepackage{tikz-cd}
\usepackage{comment}

\usepackage[draft,inline,nomargin,index]{fixme} 
\fxsetup{theme=color,mode=multiuser}
\FXRegisterAuthor{pf}{apf}{\color{blue}PF}
\FXRegisterAuthor{ip}{aip}{\color{orange}IP}
\FXRegisterAuthor{km}{akm}{\color{red}KM}
\FXRegisterAuthor{flm}{aflm}{\color{purple}FLM}

\DeclareMathOperator{\onelip}{1-\mathrm{Lip}}

  \newcommand{\E}{\mathbb E}
  \newcommand{\R}{\mathbb R}
  \newcommand{\N}{\mathbb N}
  
  \newcommand{\Z}{\mathbb Z}
  \newcommand{\C}{\mathbb C}
  \newcommand{\LL}{\mathrm L}

  \newcommand{\ot}{\otimes}

  \newcommand{\B}{\mathcal B}
  \renewcommand{\H}{\mathcal H}
  \newcommand{\BH}{\B(\H)}

\newcommand{\Sub}{\mathrm{Sub}}

  \renewcommand{\leq}{\leqslant}
  \renewcommand{\geq}{\geqslant}
  \newcommand{\abs}[1]{\left\lvert #1\right\rvert}
  \newcommand{\norm}[1]{\left\lVert #1\right\rVert}

  \newcommand{\impl}{\Rightarrow}

  \newcommand{\la}{\left\langle}
  \newcommand{\ra}{\right\rangle}

\newtheorem{thm}{Theorem}[section]

\newtheorem{lem}[thm]{Lemma}
\newtheorem{prop}[thm]{Proposition}

\theoremstyle{definition}

\newtheorem{remark}[thm]{Remark}

\newtheorem{df}[thm]{Definition}
\newtheorem*{ack}{Acknowledgments}
\newtheorem{example}[thm]{Example}

\newcommand{\bimod}[3]{{\vphantom{#2}}_{#1}{#2\;\!}_{#3}}
\newcommand{\ucp}[4]{\mathrm{UCP}_{#1,#2}(#3,#4)}

\title[Spaces of UCP maps and subalgebras of von Neumann algebras]{Spaces of UCP maps and subalgebras of von Neumann algebras}

\author{Pierre Fima}
\address{Pierre Fima
\newline
Universit\'e Paris Cit\'e, Sorbonne Universit\'e, CNRS, IMJ-PRG, F-75013 Paris, France.}
\email{pierre.fima@imj-prg.fr}

\author{François Le Maître}

\address{François Le Maître
\newline
Institut de Math\'ematiques de Bourgogne, UMR 5584 CNRS, Universit\'e de Bourgogne, 21000 Dijon, France}
\email{flemaitre@math.cnrs.fr }

\author{Kunal Mukherjee}

\address{Kunal Mukherjee
\newline
Indian Institute of Technology Madras, Chennai 600 036, India}
\email{kunal@iitm.ac.in}

\author{Issan Patri}

\address{Issan Patri
\newline
Theoretical Statistics and Mathematics Unit, Indian Statistical Institute, Delhi Centre, 7 S. J. S. Sansanwal Marg, New
Delhi 110016, India}
\email{issanp@isid.ac.in}

\begin{document}

\begin{abstract}
In this paper, we establish that several natural topologies on the space of state-preserving unital completely positive maps coincide and that make it a Polish space. We then focus on the subspace of state-preserving conditional expectations and analyse its topology in detail, recovering the Haagerup-Winslow result that it aligns with the Effros--Mar\'echal topology on the space of von Neumann subalgebras. 

This correspondence is then applied to structural classes of subalgebras, including amenable, Haagerup and weakly amenable subalgebras. Among other consequences, we demonstrate the closedness of amenable subalgebras admitting state-preserving conditional expectations and analyze the semicontinuity and failure of continuity of the Cowling--Haagerup constant as a function on subalgebras. Finally, we investigate the space of von Neumann subalgebras that are not the image of state preserving conditional expectations for a fixed faithful normal state. For several important classes of von Neumann algebras---such as type ${\rm III}_\lambda$ factors with $0 \le \lambda < 1$ and type ${\rm III}_1$ factors with a state whose centraliser is infinite dimensional---we show that the subalgebras lacking state-preserving conditional expectations form an open and dense subset. Thus, in these settings, the generic subalgebra is not the range of state preserving conditional expectation.
\end{abstract}

\keywords{von Neumann Algebras, UCP maps, Effros Marechal Topology}
\subjclass[2020]{46L10, 54H05}
\date{}


\maketitle

\section{Introduction}

This paper is a continuation of our study of the Maréchal (or Effros-Maréchal) topology on the space of 
von Neumann subalgebras of a given von Neumann algebra \cite{FLMMP24}.
Its starting point is a characterization of convergence in this space in terms 
of pointwise convergence of conditional expectations due to Haagerup and Winsløw
(building on work of Tsukada \cite{tsukadaStrongLimitNeumann1985})
which we first state in a special case. 

\begin{thm}[{\cite[Corollary~2.12]{haagerupEffrosMarechalTopologySpace1998}}]\label{thm: HW pointwise cv}
Given a finite von Neumann algebra $(M,\tau)$ with separable predual, 
a sequence of subalgebras $N_n$ converges to $N$ in the Maréchal topology 
if and only if for all $x\in M$, one has
$\mathbb E_{N_n}(x)\to \mathbb E_{N}(x)$ strongly, where $\mathbb E_N$ denotes the 
($\tau$-preserving) conditional expectation onto $N$. 
\end{thm}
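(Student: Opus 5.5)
We need a characterization of Maréchal convergence. Recall that $N_n\to N$ in the Effros--Maréchal topology iff $\liminf N_n = \limsup N_n = N$, where $\liminf N_n$ is the set of strong$^*$ limits of bounded sequences $x_n\in N_n$, and $\limsup N_n$ is the von Neumann algebra generated by weak limit points of bounded nets $x_{n_k}\in N_{n_k}$. We work in the standard form $L^2(M,\tau)$. Since the sequence $\mathbb E_{N_n}(x)$ is bounded by $\|x\|$, strong convergence there is the same as $\|\mathbb E_{N_n}(x)-\mathbb E_N(x)\|_2\to 0$. The key identification is that $\mathbb E_{N_n}$ restricts on $L^2$ to the orthogonal projection $e_{N_n}$ onto $L^2(N_n)$. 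So the condition becomes strong convergence $e_{N_n}\to e_N$ on the dense subspace $M\hat 1$, and hence on all of $L^2(M)$ because the projections are uniformly bounded.

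For ($\Leftarrow$), assume $e_{N_n}\to e_N$ strongly. First, $N\subseteq\liminf N_n$: for $x\in N$ put $x_n=\mathbb E_{N_n}(x)\in N_n$. Then $\|x_n\|\le\|x\|$ and $\|x_n-x\|_2\to0$, and likewise for adjoints since $\mathbb E$ is $*$-preserving. In a finite algebra, bounded $\|\cdot\|_2$-convergence is the same as strong$^*$ convergence on $L^2$ (use $\|(x_n-x)y\hat 1\|\le\|y\|\,\|x_n-x\|_2$ via the commutant $JMJ$), so $x\in\liminf N_n$. Second, $\limsup N_n\subseteq N$: if $x_{n_k}\in N_{n_k}$ is bounded and converges weakly to $y$, then for any $\xi\in L^2$ we have $\langle y\hat1,\xi\rangle=\lim\langle x_{n_k}\hat1,e_{N_{n_k}}\xi\rangle$. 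Because $e_{N_{n_k}}\xi\to e_N\xi$ in norm and the vectors $x_{n_k}\hat 1$ are bounded, this equals $\langle y\hat1,e_N\xi\rangle$. Hence $y\hat1\in L^2(N)$, so $y\in N$. It follows that $N\subseteq\liminf\subseteq\limsup\subseteq N$.

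For ($\Rightarrow$), assume $N_n\to N$. It is enough to show that every subsequence has a further subsequence along which $e_{N_n}\to e_N$ strongly.
\begin{enumerate}[(i)]
\item Pass to a subsequence along which $e_{N_n}\to P$ in the weak operator topology, using compactness of the unit ball.
\item Show $P\ge e_N$. For $x\in N$, the inclusion $N=\liminf N_n$ gives bounded $x_n\in N_n$ with $x_n\to x$ strongly$^*$. Then $\|e_{N_n}x\hat1-x\hat1\|\le\|x_n\hat1-x\hat1\|\to0$, so $e_{N_n}\to 1$ strongly on $L^2(N)$.
\item Show $e_N^\perp P e_N^\perp=0$. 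For $\xi\in M\hat1$ take $y_n=\mathbb E_{N_n}(y)$, which is bounded. Any weak cluster point $z$ of $y_n$ lies in $\limsup N_n=N$. Therefore $\langle e_{N_n}y\hat1,\eta\rangle=\langle y_n\hat1,\eta\rangle$ has all its cluster points of the form $\langle z\hat1,\eta\rangle$ with $z\hat 1\in L^2(N)$. This gives that every weak cluster point of $e_{N_n}\xi$ lies in $L^2(N)$, so $P=e_NP$.
\item Conclude. Combining (ii) and (iii) gives $P=e_N$. Weak convergence of projections to a projection implies strong convergence, since $\|e_{N_n}\xi\|^2=\langle e_{N_n}\xi,\xi\rangle\to\|e_N\xi\|^2$. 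Every subsequence therefore has a sub-subsequence converging to $e_N$, which gives full convergence.
\end{enumerate}

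The main obstacle is step (iii): one has to make sure that limits of $\mathbb E_{N_n}(y)$ genuinely land in $N$. Here one needs the Haagerup--Winsløw description of $\limsup$ as the algebra generated by weak cluster points, so that each cluster point $z$ lies in $N$ itself and not merely in some larger closure. The appeal to boundedness of $y_n$ is what makes the weak-cluster-point formulation apply. Separability of the predual ensures that $L^2(M)$ is separable, so the sequential arguments above suffice.
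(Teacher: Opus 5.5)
Your proof is correct. It follows a route related to, but distinct from, the paper's, which proves the statement as Theorem~\ref{thm: marechal same as one lip}.

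For $(\Rightarrow)$ the paper argues elementwise. It takes a weak cluster point $y$ of $\mathbb E_{N_n}(x)$ and shows $y\in N$ directly from the definition: a normal $\omega$ vanishing on $N$ with $\omega(y)\neq0$ would contradict $\|\omega|_{N_n}\|\to0$. It then identifies $y$ as the $\|\cdot\|_2$-closest point to $x$ in $N$. For this it uses weak lower semicontinuity of $\|\cdot\|_2$ and the Mar\'echal-to-strong$^*$ continuous dense selections $x_l$ of \cite[Theorem 3.9]{haagerupEffrosMarechalTopologySpace1998}.

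You work one level up, with a weak cluster point $P$ of the Jones projections, and the two ingredients appear symmetrically. Inner approximation ($N\subseteq\liminf N_n$, which plays the role of the selections $x_l$) gives $Pe_N=e_N$. Outer control (cluster points lie in $N$) gives $e_NP=P$.

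For $(\Leftarrow)$ the paper bounds $\|\omega\circ\mathbb E_N-\omega\circ\mathbb E_{N_n}\|$, while you check the inner and outer conditions directly. Your version is the more robust one here. The paper's chain of equalities uses $\langle\mathbb E_N(x)\xi,\xi\rangle=\langle xe_N\xi,e_N\xi\rangle$, which holds only for $\xi\in\LL^2(N)$. That estimate is better obtained by testing on $\omega=\tau(\cdot\,c)$, for which $\omega\circ\mathbb E_A=\tau(\cdot\,\mathbb E_A(c))$. Also, the trace enters your argument only through the cyclic and separating vector. So your argument carries over verbatim to subalgebras in $\mathcal E_\varphi$ with their $\varphi$-preserving expectations.

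One point needs tidying. In Haagerup--Winsl\o{}w, $\limsup N_n$ is defined as $(\liminf N_n')'$. That weak cluster points of bounded sequences $x_{n_k}\in N_{n_k}$ lie in it is an easy commutation argument, and this is all your step (iii) needs.

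Your $(\Leftarrow)$, however, uses the converse: that $\limsup N_n$ is generated by such cluster points. That is not something you can simply recall, but you can bypass it.
\begin{itemize}
\item Given $\omega\in M_*$, pick $x\in N_1$ with $|\omega(x)|=\|\omega|_N\|$. Then $\mathbb E_{N_n}(x)\in(N_n)_1$ converges strongly to $x$, so $\liminf_n\|\omega|_{N_n}\|\geq\|\omega|_N\|$.
\item Conversely, choose norming elements $x_n\in(N_n)_1$ and extract a weakly convergent subsequence realizing the $\limsup$. Its limit lies in $N_1$ by your outer argument, so $\limsup_n\|\omega|_{N_n}\|\leq\|\omega|_N\|$.
\end{itemize}
This is exactly the pattern of Lemma~\ref{lem: convergence increasing}.
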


It is a result of Maréchal that the Maréchal topology is Polish, although a
full proof only appeared recently, in our first paper on this topic  \cite{FLMMP24}.
Combining this with Theorem~\ref{thm: HW pointwise cv}, it is immediate that the space of 
conditional expectations is a Polish space when endowed with the topology of pointwise strong convergence. 
Our first aim in this paper is to place this result in the more global framework of ($\tau$-preserving) ucp maps. 
We work at the level of possibly non finite von Neumann algebras, which is the general setup 
for Haagerup and Winsløw's Theorem~\ref{thm: HW pointwise cv}, as follows- Let $M$ be a von Neumann algebra with separable predual, fix a faithful normal state $\varphi$, 
and denote by
$\mathcal E_\varphi$ of all von Neumann subalgebras $N$ of $M$ which admit 
a $\varphi$-preserving conditional expectation. 
Haagerup and Winsløw's Theorem \ref{thm: HW pointwise cv} becomes a characterization of Maréchal convergence of elements of $\mathcal E_{\varphi}$ as pointwise convergence in the strong topology of their $\varphi$-preserving conditional expectations. 

If we further fix another von Neumann algebra $N$ with separable predual and a faithful normal state $\psi$, we arrive 
at the most general notion of maps considered in this paper, which are state-preserving ucp maps $(M,\varphi)\to (N,\psi)$.
Such maps are automatically normal (see Remark \ref{RmkNormal}), and our first theorem reads as follows.

\begin{thm}[see Proposition~\ref{ConservativeucpisPolish}]
    The space of state-preserving ucp maps $(M,\varphi)\to(N,\psi)$ is a Polish space 
    for the topology of pointwise strong convergence.
\end{thm}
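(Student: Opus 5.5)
The strategy is to realise each state-preserving ucp map $\Phi\colon(M,\varphi)\to(N,\psi)$ as a contraction between the GNS Hilbert spaces, and to check that pointwise strong convergence corresponds to weak operator convergence of these contractions.

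\textbf{Step 1 (the contraction).} By Kadison--Schwarz,
\[
\psi(\Phi(x)^*\Phi(x))\leq\psi(\Phi(x^*x))=\varphi(x^*x).
\]
Hence $x\xi_\varphi\mapsto\Phi(x)\xi_\psi$ extends to a contraction $T_\Phi\colon L^2(M,\varphi)\to L^2(N,\psi)$. The assignment $\Phi\mapsto T_\Phi$ is injective, because $\xi_\psi$ is separating.

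\textbf{Step 2 (closed image).} The closed unit ball of $\B(L^2(M,\varphi),L^2(N,\psi))$ is compact metrizable in the weak operator topology, since both Hilbert spaces are separable. I will show that the image of $\Phi\mapsto T_\Phi$ is WOT-closed, so that it is compact metrizable and in particular Polish.

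Suppose $T_{\Phi_i}\to T$ in WOT along a net. By weak$^*$ compactness of the unit ball of $N$, pass to a subnet with $\Phi_i(x)\to\Phi(x)$ weak$^*$ for every $x$. The limit $\Phi$ is ucp, being a pointwise weak$^*$ limit of ucp maps. It is state-preserving because $\psi$ is normal, and $T=T_\Phi$ because vector functionals are normal. So WOT on the image coincides with pointwise weak$^*$ (equivalently pointwise weak) convergence on bounded sets. Normality of the limit is automatic by Remark~\ref{RmkNormal}.

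\textbf{Step 3 (strong versus weak).} The target topology is pointwise strong convergence, which is finer than the compact topology of Step 2. I would therefore aim for a different but still Polish structure: pointwise strong convergence of $\Phi_i(x)$ should be the same as strong operator convergence of $T_{\Phi_i}$, together with strong$^*$ control obtained by also using the adjoint maps $\Phi^*$ (conditional expectations are self-adjoint in this sense). Concretely:

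\begin{itemize}
\item On bounded sets, $\Phi_i(x)\to\Phi(x)$ strongly if and only if $\Phi_i(x)\xi_\psi\to\Phi(x)\xi_\psi$, because $\xi_\psi$ is cyclic for $N'$ and is separating for $N$. In other words, this holds if and only if $T_{\Phi_i}(x\xi_\varphi)\to T_\Phi(x\xi_\varphi)$.
\item Since the $T_\Phi$ are uniformly bounded and $M\xi_\varphi$ is dense, this is exactly SOT convergence of $T_{\Phi_i}$ to $T_\Phi$.
\end{itemize}

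Hence the space is homeomorphic to its image in the unit ball with the SOT. The unit ball with the SOT is Polish (it is also Polish with the strong$^*$ topology). It then suffices to show the image is a $G_\delta$ in the SOT ball.

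\textbf{Step 4 ($G_\delta$ property).} The image is WOT-closed by Step 2, hence SOT-closed, since the SOT is finer. A closed subset of a Polish space is Polish, which finishes the proof.

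\textbf{Main obstacle.} The delicate point is the first bullet of Step 3: justifying that strong convergence of $\Phi_i(x)$ in $\B(L^2(N,\psi))$ (or in any normal representation) reduces to convergence on the single vector $\xi_\psi$. I would handle this with the standard fact that on bounded sets the strong topology is determined by a cyclic vector for the commutant: for $y'\in N'$,
\[
\|(\Phi_i(x)-\Phi(x))y'\xi_\psi\|\leq\|y'\|\,\|(\Phi_i(x)-\Phi(x))\xi_\psi\|,
\]
and the uniform bound $\|\Phi_i(x)\|\leq\|x\|$ handles density.
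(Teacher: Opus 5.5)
Your proof is correct. It is close in spirit to the paper's but organised differently, mainly in how closedness of the image is proved.

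\textbf{The paper's route.} The paper identifies a state-preserving ucp map with its restriction to $M_1$, viewed as a $1$-Lipschitz map $(M_1,\|\cdot\|_{2,\varphi})\to(N_1,\|\cdot\|_{2,\rho})$. The ambient space $\onelip_1(M_\varphi,N_\rho)$ is Polish by the general Theorem~\ref{thm: one lip is polish}. Closedness is then checked condition by condition: linearity, unitality, state preservation, and complete positivity (the last via $\|\cdot\|_2$-continuity of multiplication on $N_1$). Pointwise $2$-norm limits land in $N_1$ by completeness. Finally, Lemma~\ref{lem: topologies on ball are the same} matches pointwise $2$-norm convergence with pointwise strong convergence.

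\textbf{Your route.} You linearise to the contraction $T_\Phi$. The ambient Polish space is then the SOT unit ball of $\mathcal B(L^2(M,\varphi),L^2(N,\psi))$, which is itself an instance of the $1$-Lipschitz theorem. You prove closedness of the image by a compactness argument in the coarser WOT, using Tychonoff, weak$^*$-closedness of the positive cones of $M_n(N)$, and normality of $\psi$ and of vector functionals. You then use that WOT-closed sets are SOT-closed. Your identification of pointwise strong convergence with SOT convergence of $T_\Phi$ is exactly the content of Lemma~\ref{lem: topologies on ball are the same} plus density of $M\xi_\varphi$.

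\textbf{What each buys.} Your approach needs subnets and the automatic normality of Remark~\ref{RmkNormal} for the limit map. In return it gives more:
\begin{enumerate}[(i)]
\item the space is compact metrizable for pointwise weak convergence, in the spirit of the remark following Lemma~\ref{lem: pointwise in 2 norm equiv pointwise weak};
\item the Polish topology is the restriction of the SOT under $\Phi\mapsto T_\Phi$, in line with Remark~\ref{RmkGrassmanian}.
\end{enumerate}
The paper's approach is purely metric and sequential, and avoids compactness altogether.

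\textbf{Clean-up.} The aside in Step~3 about strong$^*$ control via adjoint maps $\Phi^*$ is never used and should be dropped. For a $\Phi$ that does not intertwine the modular groups, $T_\Phi^*$ is in general not of the form $T_\Psi$ for a ucp map $\Psi\colon N\to M$. Also, Step~4 actually proves closedness, not merely a $G_\delta$ property, so its heading should say so.
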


We then observe that when $(N,\psi)=(M,\varphi)$,
by identifying elements of $\mathcal E_{\varphi}$ to their conditional expectation,
the space $\mathcal E_{\varphi}$ becomes closed in the space of ucp maps $(M,\varphi)\to(M,\varphi)$ (see Theorem~\ref{Subalgpolish}), 
while the general form of Theorem~\ref{thm: HW pointwise cv} only yields that it is $G_\delta$ therein.
We additionally provide a more direct proof of Theorem~\ref{thm: HW pointwise cv} (see Theorem~\ref{thm: marechal same as one lip}).\\

Next, we make use of the nicer topology provided by pointwise strong convergence so as to study 
various approximation properties in the space $\mathcal E_{\varphi}$, where their behavior is much tamer
than in the whole space of subalgebras. 

\begin{thm}
Given a von Neumann algebra $M$ endowed with a faithful normal state $\varphi$, the following spaces
defined closed subspaces of $\mathcal E_\varphi$:
\begin{enumerate}[(i)]
    \item \label{item: amenable closed} the space of all $M\in\mathcal E_{\varphi}$ which are amenable; 
    \item \label{item: haagerup closed}the space of all $M\in\mathcal E_\varphi$ satisfying the Haagerup property;
    \item \label{item: Lambdacb lsc}the space of all $M\in\mathcal E_\varphi$ whose Cowling-Haagerup constant is at most $C\geq 0$.
\end{enumerate}
\end{thm}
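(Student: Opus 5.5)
Throughout, I identify each $N\in\mathcal E_\varphi$ with its $\varphi$-preserving conditional expectation $E_N$. By Theorem~\ref{thm: HW pointwise cv} and its general form, convergence $N_n\to N$ in $\mathcal E_\varphi$ then means $E_{N_n}(x)\to E_N(x)$ strongly for every $x\in M$. All three properties can be characterised by state-preserving normal cp/cb maps on $N$ that approximate the identity in the point-strong (or point-$L^2(\varphi)$) topology. The plan is to transport such approximating maps from the $N_n$ to the limit $N$ by composing with conditional expectations.

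\textbf{Setting up a uniform characterisation.}
\begin{enumerate}[(i)]
\item For amenability I will use the characterisation by the existence of a net of normal ucp maps $\Phi_i: N\to N$ of finite rank (factoring through matrix algebras) converging to the identity pointwise $\sigma$-weakly. Equivalently, I can use injectivity: the existence of a conditional expectation $B(H)\to N$.
\item For the Haagerup property I will use the Caspers--Skalski / Okayasu--Tomatsu characterisation. This asks for $\varphi$-preserving ucp (or contractive cp) maps $\Phi_i$ on $N$ whose $L^2$-implementations are compact and converge strongly to $1$.
\item For $\Lambda_{cb}(N)\le C$ I will use finite-rank normal maps $\Phi_i$ with $\|\Phi_i\|_{cb}\le C$ converging pointwise $\sigma$-weakly to the identity.
\end{enumerate}
In each case the defining condition only needs to be tested on finite sets of elements and finitely many vectors. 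This reduces the problem to a diagonal argument over a separable setting, since $M_*$ is separable.

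\textbf{Main step.} Let $N_n\to N$ with each $N_n$ having the property, and let $\Phi^{(n)}_i$ be approximating maps for $N_n$. Define
\[
\Psi = E_N\circ \Phi^{(n)}_i\circ E_{N_n}|_N : N\to N.
\]
This map is normal and cp (or cb with $\|\Psi\|_{cb}\le C$, since conditional expectations are completely contractive). It has finite rank when $\Phi^{(n)}_i$ does, and it is $\varphi$-preserving when $\Phi^{(n)}_i$ is. For the Haagerup case, compactness of the $L^2$-implementation is preserved, because $E_N$ and $E_{N_n}$ are implemented by the Jones projections $e_N, e_{N_n}$ on $L^2(M,\varphi)$.

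For $x\in N$, I write
\[
\Psi(x)-x = E_N\bigl(\Phi^{(n)}_i(E_{N_n}x)-E_{N_n}x\bigr) + E_N(E_{N_n}x - x).
\]
Since $E_{N_n}x\to E_Nx = x$ strongly, the second term tends to $0$. For the first term, I first fix $n$ large so that $\|E_{N_n}x-x\|_\varphi$ is small for $x$ in a finite set $F$. I then choose $i$ so that $\Phi^{(n)}_i(E_{N_n}x)$ is close to $E_{N_n}x$ in $\|\cdot\|_\varphi$. Since $E_N$ is $\|\cdot\|_\varphi$-contractive, this gives $\|\Psi(x)-x\|_\varphi<\varepsilon$ on $F$. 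For bounded nets, $\|\cdot\|_\varphi$-convergence (plus $\|\cdot\|_\varphi^\sharp$ if needed) gives strong, hence $\sigma$-weak, convergence. This produces an approximating net for $N$, so $N$ has the property and each subspace is closed.

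\textbf{Expected obstacle.} The subtle point is matching topologies in the non-tracial case. The definitions give $\sigma$-weak pointwise convergence $\Phi^{(n)}_i\to\mathrm{id}$, but I need $L^2(\varphi)$-convergence at the points $E_{N_n}x$, which move with $n$. Moreover, $\Psi$ must remain $\varphi$-preserving, or at least satisfy $\varphi\circ\Psi\le\varphi$, so that it is $L^2$-bounded.

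To handle this, I would first upgrade the approximating maps to be $\varphi$-preserving (or $\varphi\circ\Phi\le C\varphi$). This uses the known standard-form versions of the characterisations (Haagerup's, and Okayasu--Tomatsu's). Then, by convexity (Mazur), $\sigma$-weak convergence can be replaced by strong convergence. The same convexity must respect the cb-norm bound, which is fine since the cb-ball is convex. For amenability, an alternative cleaner route is to use semidiscreteness and compose with $E_{N_n}$ and $E_N$ as above. Finally, the Haagerup case needs the compact-operator condition to survive these convex combinations, which it does since finite convex combinations of compact operators are compact.
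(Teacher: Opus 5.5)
Your proposal is correct and is essentially the paper's proof: in all three cases the paper sets $\Psi=\mathbb{E}^\varphi_N\circ\Phi\circ\mathbb{E}^\varphi_{N_i}|_N$ and uses exactly your two-term splitting, with compactness in the Haagerup case coming from $\widehat\Psi=e_N\widehat\Phi e_{N_i}|_{\LL^2(N)}$. The obstacle you anticipate does not arise, because only the $\|\cdot\|_{2,\varphi}$-contractivity of $\mathbb{E}^\varphi_N$ is used and no $\varphi$-preservation of $\Phi$ is needed: the paper builds $\|\cdot\|_{2,\varphi}$-approximation into its definitions for (ii) and (iii), and in (i) it applies semidiscreteness of $N_i$ against the functional $\eta\circ\mathbb{E}^\varphi_N|_{N_i}$, so $\sigma$-weak approximation suffices and no Mazur-type upgrade is required.
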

A few remarks are in order: Item \eqref{item: amenable closed} was obtained in \cite{TYH25} in the finite case, with a more complicated proof
based on hypertraces. Here we directly use semidiscreteness in a more general setup (see Proposition~\ref{prop: amenable is closed v1}). Further, we are able to generalize this further to relative amenability, but we have to follow yet another approach which also makes use of the topology on the space of UCP maps (see Theorem~\ref{thm:rel-amenable-closed-core-proof}).
Concerning Item~\eqref{item: haagerup closed}, we also obtain a more general relative version, see Theorem~\ref{thm: rel haagerup is closed}. Finally, Item~\eqref{item: Lambdacb lsc} can be restated as the fact that the Cowling-Haagerup constant is lower semi-continuous, and we further show that it is not continuous, even at the level of the space of subgroups, and use this to describe the 
space of weakly amenable subalgebras, which is $F_\sigma$ but neither open nor closed (see Section~\ref{sec: weakly amenable}).
Applications to Kechris' spaces of subequivalence relations are also presented.
\\

Finally, we study von Neumann subalgebras that lack state preserving conditional expectations, showing that for a von Neumann algebra with a state $\varphi$ having infinite dimensional centraliser, the space of subalgebras without a $\varphi$-preserving conditional expectation is open and dense (see Theorem~\ref{Nocondexpecdense}). 
For instance, the theorem applies to type ${\rm III}_\lambda$ factors with $\lambda\in [0,1)$ (see Remark~\ref{remarkoncentralizerhypothesis}).
We also characterize non finite von Neumann algebras as those for which the set of subalgebras which do not carry 
a conditional expectation is dense (see Theorem~\ref{thm: no condex is dense in finite}).\\

\paragraph{\textbf{Overview of the paper.}} 
In Section~\ref{sec:prelim}, we collect the necessary preliminaries on Polish spaces, von Neumann algebras, and the Effros--Mar\'{e}chal topology on the space of von Neumann subalgebras. In
Section~\ref{sec:ucp}, we study the space of state-preserving unital completely
positive maps between von Neumann algebras equipped with faithful normal states,
establishing that several natural topologies on this space coincide and that this space equipped with any of these topologies is a Polish space. In Section~\ref{sec:cond_exp}, we specialise to the space $\mathcal{E}_\varphi$
of von Neumann subalgebras admitting a $\varphi$-preserving conditional expectation,
showing that it is a closed Polish subspace and that its topology coincides with the
restriction of the Effros--Mar\'{e}chal topology. In Section~\ref{sec:subalgebras}, we
apply this topological framework to study the properties of structural classes of subalgebras. Finally,  in
Section~\ref{sec:no_cond_exp}, we analyse the complement of $\mathcal{E}_\varphi$ in
the space of subalgebras of $M$.

\begin{ack}
The present paper is an offspring of a long term project on the 
Maréchal topology. 
We have had the chance to carry out this 
project in several places dedicated to international collaboration, and 
we thus gratefully acknowledge funding from ISI Delhi, SERB (SRG/2022/001717),  IRL ReLaX, IEA GAOA, IIT Madras (IoE Phase II, SB22231267MAETWO008573), ICISE, CEFIPRA (6101-1) and ANR-25-CE40-5010 (Project CroCQG).
\end{ack}

\section{Preliminaries}\label{sec:prelim}

\subsection{Polish spaces} A topological space which is separable and whose topology admits a compatible metric is called a \textbf{Polish space}.
Since metrizable spaces are separable iff they are second-countable (in other words, their topology admits a countable basis), any closed subset of a Polish space is Polish for the induced topology.
Also note that a compact Hausdorff space is Polish iff it is second-countable, because Urysohn's metrization theorem provides  a compatible metric which has to be complete by compactness.

We note the following general example of Polish space.

\begin{thm}	\label{thm: one lip is polish}
Let $(X,d_X)$ and $(Y,d_y)$ be separable complete metric spaces. Consider the space of $1$-Lipschitz maps from $X$ to $Y$ 
$$
\onelip(X,Y)=
\{f: X\to Y:\forall x_1,x_2\in X, d_Y(f(x_1),f(x_2))\leq d_X(x_1,x_2)\}.
$$
Then $\onelip(X,Y)$ is a Polish space for the topology of pointwise convergence.

 Moreover, if $D$ is a dense subset of $X$, then for a sequence $(f_n)$ of $1$-Lipschitz functions we have $f_n\to f$ if and only if $f_n(x)\to f(x)$ for all $x\in D$. 
\end{thm}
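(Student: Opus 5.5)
The plan is to embed $\onelip(X,Y)$ into a countable product of copies of $Y$ and show that the image is closed. Fix a countable dense subset $D=\{x_k\}_{k\in\N}$ of $X$, and consider the restriction map
$$
R\colon \onelip(X,Y)\to Y^{D},\qquad f\mapsto (f(x_k))_k .
$$
The product $Y^D$ is Polish, being a countable product of Polish spaces. I will prove the ``moreover'' part first, since it drives everything else.

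\textbf{Step 1: pointwise convergence on $D$ suffices.} Suppose $f_n(x)\to f(x)$ for all $x\in D$, and fix $x\in X$ and $\varepsilon>0$. Choose $x'\in D$ with $d_X(x,x')<\varepsilon$. Since $f_n$ and $f$ are $1$-Lipschitz,
$$
d_Y(f_n(x),f(x))\leq d_Y(f_n(x),f_n(x'))+d_Y(f_n(x'),f(x'))+d_Y(f(x'),f(x)) < 2\varepsilon+d_Y(f_n(x'),f(x')).
$$
The last term tends to $0$, which gives the claim. The converse direction is trivial. The same $3\varepsilon$ argument, run with nets instead of sequences, shows that $R$ is a homeomorphism onto its image when $\onelip(X,Y)$ carries the pointwise topology. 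Injectivity of $R$ holds because continuous maps agreeing on a dense set coincide.

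\textbf{Step 2: the image of $R$ is closed.} The image is exactly the set
$$
\{(y_k)\in Y^D : d_Y(y_k,y_l)\leq d_X(x_k,x_l)\ \text{for all } k,l\}.
$$
This set is an intersection of closed sets, because each coordinate projection is continuous. The inclusion of the image in this set is clear. For the reverse inclusion, take such a family $(y_k)$. Then $x_k\mapsto y_k$ is $1$-Lipschitz on $D$, hence uniformly continuous. Since $Y$ is complete, this map extends uniquely to a map $f\colon X\to Y$: for $x_{k_j}\to x$ the sequence $(y_{k_j})$ is Cauchy, so it converges. The extension $f$ is still $1$-Lipschitz by passing to the limit in the inequality. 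Thus the image of $R$ is a closed subset of a Polish space, hence Polish. Transporting this through the homeomorphism $R$, $\onelip(X,Y)$ is Polish.

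The only delicate point is the extension in Step 2, which is where completeness of $Y$ is used. Completeness of $X$ plays no role; only separability of $X$ is needed, to provide the countable set $D$. Everything else is routine.
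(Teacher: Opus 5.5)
Your proof is correct and follows the paper's argument: restrict to a countable dense $D$, identify the image with the closed set $\onelip(D,Y)\subseteq Y^D$ via unique $1$-Lipschitz extension (using completeness of $Y$), and use the $3\varepsilon$ estimate to show that restriction is a homeomorphism. Where you run that estimate with nets, the paper checks continuity of the extension map only on sequences, which suffices because its domain $\onelip(D,Y)$ is metrizable; your remark that completeness of $X$ is never used applies equally to the paper's proof.
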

\begin{proof}
Let $D\subseteq X$ be a countable dense subset of $X$. Consider the space $\onelip(D,Y)$, then for all $f\in Y^D$, we have $f\in \onelip(D,Y)$ if and only if for every $x_1,x_2\in D$, 
$$d_Y(f(x_1),f(x_2))\leq d_X(x_1,x_2).$$
We thus have that $\onelip(D,Y)$ is a closed subset of the Polish space $Y^D$. Now recall that since $d_Y$ is complete and $D$ is dense in $X$, every $1$-Lipschitz map from $D$ to $X$ uniquely extends to a $1$-Lipschitz map from $X$ to $Y$. 

Denote by $\Phi:\onelip(D,Y)\to\onelip(X,Y)$ the injective map which associates to $f\in\onelip(D,Y)$ its extension in $\onelip(X,Y)$. Then $\Phi$ is a bijection whose inverse is continuous by definition of the product topology. 
We will show that $\Phi$ is continuous as well. Indeed, suppose $(f_n)$ is a sequence of elements of $\onelip(D,Y)$ such that $f_n\to f$, and let $x\in X$. For all $d\in D$,  we have 
\begin{align*}
d_Y(\Phi(f_n)(x),\Phi(f)(x))&\leq d_Y(\Phi(f_n)(x),f_n(d))\\&+d_Y(f_n(d),f(d))+d_Y(f(d),\Phi(f)(x))\\
&\leq 2d_X(x,d)+d_Y(f_n(d),f(d)).
\end{align*}
So for a given $\epsilon>0$, if we pick $d\in D$ such that $d_X(x,d)<\frac \epsilon 4$, then since $f_n(d)\to f(d)$ we will have $d_Y(\Phi(f_n)(x),\Phi(f)(x))<\epsilon$ for large enough $n$. We conclude that $\Phi(f_n)\to \Phi(f)$, so $\Phi$ is indeed continuous. 

Thus, $\onelip(X,Y)$ is homeomorphic to the Polish space $\onelip(D,Y)$, so it is a Polish space itself. Further, the moreover part of the theorem is simply a reformulation of the fact that the map $\Phi$ is a homeomorphism. 
\end{proof}

\subsection{von Neumann algebras}

All von Neumann algebras considered in the present paper are supposed to have a separable predual. Given a Banach space $X$, we will denote by $X_1$ its unit ball. We will need the following two well-known Lemmas. We include proofs for the sake of completeness.

\begin{lem}\label{lem: topologies on ball are the same}
	Let $(M,\omega)$ be von Neumann algebra equipped with a faithful normal state, then on the unit ball of $M$, the strong and the $\norm\cdot_2$ topology coincide. Furthermore, the weak and the $\LL^2$-weak topology coincide. 
\end{lem}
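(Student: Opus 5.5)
Both claims come down to two facts: the cyclic representation of $\omega$ at $\xi_\omega$, and the density of $M\xi_\omega$ and $M'\xi_\omega$ in $\LL^2(M,\omega)$. Since the topologies do not depend on the faithful normal representation (on bounded sets), I will work in the GNS representation $M\subseteq B(\LL^2(M,\omega))$ with cyclic separating vector $\xi_\omega$. Here $\norm{x}_2=\norm{x\xi_\omega}$ and the $\LL^2$-weak topology is generated by the functionals $x\mapsto\la x\xi_\omega,\eta\ra$ for $\eta\in\LL^2(M,\omega)$. One direction of each claim is immediate: strong convergence gives $x_i\xi_\omega\to x\xi_\omega$, and weak convergence gives convergence of $\la x_i\xi_\omega,\eta\ra$. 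So only the converse inclusions on $M_1$ need an argument.

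\emph{Strong.} Let $(x_i)$ be a net in $M_1$ with $\norm{x_i-x}_2\to 0$, where $x\in M_1$. Put $y_i=x_i-x$, so that $\norm{y_i}\leq 2$.
\begin{enumerate}[(1)]
\item For $a'\in M'$ the vectors $a'\xi_\omega$ are controlled directly: $\norm{y_i a'\xi_\omega}=\norm{a' y_i\xi_\omega}\leq\norm{a'}\,\norm{y_i}_2\to 0$.
\item Since $\xi_\omega$ is separating for $M$, it is cyclic for $M'$, so $M'\xi_\omega$ is dense in $\LL^2(M,\omega)$.
\item For an arbitrary $\eta$, choose $a'$ with $\norm{\eta-a'\xi_\omega}<\epsilon$. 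Then
$$\norm{y_i\eta}\leq 2\epsilon+\norm{y_ia'\xi_\omega},$$
and the last term tends to $0$ by step (1).
\end{enumerate}
Hence $y_i\to 0$ strongly. This approximation step, using the uniform bound $\norm{y_i}\leq 2$, is the one place where restricting to the unit ball is essential.

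\emph{Weak.} Suppose $\la x_i\xi_\omega,\eta\ra\to\la x\xi_\omega,\eta\ra$ for all $\eta$, with $x_i,x\in M_1$. To get weak convergence it suffices to show $\la y_i\zeta,\eta\ra\to 0$ for $\zeta$ in the dense set $M'\xi_\omega$, by the same $2\epsilon$ approximation as before. For $\zeta=a'\xi_\omega$ we move $a'$ across:
$$\la y_i a'\xi_\omega,\eta\ra=\la y_i\xi_\omega,a'^*\eta\ra\to 0,$$
which holds by hypothesis.

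The main potential subtlety is the identification of the strong and weak topologies computed in an arbitrary faithful normal representation with those computed in the GNS one. I would handle this by recalling that on bounded sets both topologies coincide with the $\sigma$-strong and $\sigma$-weak topologies, which are intrinsic to $M$. Alternatively, I would simply declare that the strong and weak topologies are taken in standard form.
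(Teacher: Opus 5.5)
Your proof is correct and takes essentially the same route as the paper: you work in the GNS representation, use the density of $M'\xi_\omega$ (which the paper derives directly from faithfulness, since the projection $p$ onto $\overline{M'\xi_\omega}$ lies in $M$ and satisfies $\omega(1-p)=0$), and approximate an arbitrary vector by some $a'\xi_\omega$ using the uniform bound on $M_1$. For the weak case you move $a'$ across the inner product, which supplies exactly the argument the paper leaves to the reader.
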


\begin{proof}
By GNS construction we may and will assume that $M\subset\BH$, $\xi\in\mathcal{H}$ is a norm $1$ vector such that $\overline{M\xi}=\mathcal{H}$ and $\omega(x)=\langle x\xi,\xi\rangle$ for all $x\in M$. Note that the orthogonal projection $p$ onto $\overline{M'\xi}\subseteq\mathcal{H}$ is in $M''=M$ and, since $p\xi=\xi$, $\omega(1-p)=0$. By faithfulness of $\omega$ we get $p=1$ so $\overline{M'\xi}=\mathcal{H}$. Since $\Vert x\Vert_2=\Vert x\xi\Vert$, it is clear that any subbasic $\Vert\cdot\Vert_2$-open set is strongly open. Let $O:=\{x\in M_1\,:\,\Vert (x-x_0)\eta\Vert<\epsilon\}$ by any subbasic strong open neighborhood of $x_0\in M_1$. Let $a\in M'$ be such that $\Vert \eta-a\xi\Vert<\epsilon/4$ and note that $\Vert (x-x_0)\eta\Vert\leq\Vert a\Vert\,\Vert x-x_0\Vert_2+\epsilon/2$. Hence, 
$$\{x\in M_1\,:\,\Vert x-x_0\Vert_2<\frac{\epsilon}{2\Vert a\Vert}\}\subset O.$$
This shows that the strong and the $\Vert\cdot\Vert_2$ topology coincide on $M_1$. The last statement is even easier to prove and is left to the reader.
\end{proof}

\begin{lem}\label{lem: equiv cv for proj}
	Let $\mathcal H$ be a Hilbert space, $(p_n)_n$ a net of orthogonal projections and $p$ be another orthogonal projection. The following are equivalent: 
	\begin{enumerate}[(i)]
		\item $p_n\to p$ strongly;
		\item For all $\xi\in\mathcal H$, $\norm{p_n\xi}\to\norm{p\xi}$;
		\item $p_n\to p$ weakly.
	\end{enumerate}
\end{lem}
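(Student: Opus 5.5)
The argument rests on the identity, valid for any orthogonal projections $p_n,p$ and any vector $\xi$,
\[
\norm{(p_n-p)\xi}^2=\norm{p_n\xi}^2+\norm{p\xi}^2-2\,\mathrm{Re}\la p_n\xi,p\xi\ra ,
\]
together with $\norm{q\xi}^2=\la q\xi,\xi\ra$ for a projection $q$. I will prove the cycle (i)$\impl$(ii)$\impl$(iii)$\impl$(i). The first two implications are immediate; the third uses the identity above.

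\textbf{(i)$\impl$(ii).} By the reverse triangle inequality, $\abs{\norm{p_n\xi}-\norm{p\xi}}\leq\norm{(p_n-p)\xi}\to0$.

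\textbf{(ii)$\impl$(iii).} For each $\xi$ we have $\la p_n\xi,\xi\ra=\norm{p_n\xi}^2\to\norm{p\xi}^2=\la p\xi,\xi\ra$. The polarization identity expresses $\la p_n\xi,\eta\ra$ as a linear combination of the four quantities $\la p_n(\xi+i^k\eta),\xi+i^k\eta\ra$. Hence $\la p_n\xi,\eta\ra\to\la p\xi,\eta\ra$ for all $\xi,\eta$, which is weak convergence.

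\textbf{(iii)$\impl$(i).} Fix $\xi$ and expand $\norm{(p_n-p)\xi}^2$ with the identity above. By weak convergence:
\begin{itemize}
\item $\norm{p_n\xi}^2=\la p_n\xi,\xi\ra\to\la p\xi,\xi\ra=\norm{p\xi}^2$;
\item $\la p_n\xi,p\xi\ra\to\la p\xi,p\xi\ra=\norm{p\xi}^2$.
\end{itemize}
Therefore the right-hand side tends to $\norm{p\xi}^2+\norm{p\xi}^2-2\norm{p\xi}^2=0$. This gives $p_n\xi\to p\xi$ in norm for every $\xi$, which is strong convergence.

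None of the steps is a real obstacle. The only point needing care is in (iii)$\impl$(i): the cross term must be handled with the vector $p\xi$ as the second argument, so that weak convergence of the net applies directly to it. Everything works for nets, since no boundedness or subsequence argument is used.
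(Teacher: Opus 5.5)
Your proof is correct. The step (iii)$\Rightarrow$(i) is the same computation as in the paper: you expand $\norm{(p_n-p)\xi}^2$, use $\norm{p_n\xi}^2=\la p_n\xi,\xi\ra$, and apply weak convergence against the vector $p\xi$.

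Where you differ is in handling (ii). The paper proves (ii)$\Rightarrow$(i) directly. It splits $\xi=p\xi+(\xi-p\xi)$, uses $\norm{p_n(\xi-p\xi)}\to 0$ for the kernel component, and uses $\norm{p_np\xi-p\xi}^2=\norm{p\xi}^2-\norm{p_np\xi}^2$ for the range component. You instead prove (ii)$\Rightarrow$(iii) by polarization and then reuse (iii)$\Rightarrow$(i).

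Your cycle is more economical. It needs only one nontrivial computation, it makes a separate check of (i)$\Rightarrow$(iii) unnecessary, and it uses the projection structure only through $\norm{q\xi}^2=\la q\xi,\xi\ra$. The paper's direct argument avoids polarization and shows slightly more: convergence of $\norm{p_n\eta}$ for $\eta$ in the range and kernel of $p$ alone already forces strong convergence.
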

\begin{proof}
	The implications (i)$\impl$(ii) and (i)$\impl$(iii) are clear. We will now show that (iii)$\impl$ (i) and then that (ii)$\impl$(i), which will finish the proof.
	
	(iii)$\impl$(i): suppose $p_n\to p$ weakly. Let $\xi\in\mathcal H$, we have 
	$$\norm{p\xi-p_n\xi}^2=\norm{p\xi^2}+\norm{p_n\xi}^2-2\Re\la p_n\xi,p\xi\ra=\norm{p\xi}^2+\la p_n\xi,\xi\ra-2\Re\la p_n\xi,p\xi\ra$$
	so by weak convergence $\norm{p\xi-p_n\xi}^2\to \norm{p\xi}^2+\la p\xi,p\xi\ra-2\Re\la p\xi,p\xi\ra=0$.
	
	 (ii)$\impl$(i): we suppose that for all $\xi\in\mathcal H$, $\norm{p_n\xi}\to\norm{p\xi}$. For $\xi\in\mathcal H$ we have:
	\begin{equation}
	\label{ineq: show that cv ptwise of norm is same as strong cv for proj}
	\norm{p_n\xi-p\xi}\leq\norm{p_n\xi-p_np\xi}+\norm{p_np\xi)-p\xi}.
	\end{equation}
	Let $\xi'=\xi-p\xi$, then $\norm{p\xi'}=0$ so by our asumption $\norm{p_n\xi'}\to 0$. We thus have $\norm{p_n\xi-p_np\xi}\to 0$, which takes care of the first term in the above inequality. 
	
	For the second term, we have
	\begin{align*}
	\norm{p_np\xi-p\xi}^2&=\norm{p_np\xi}^2+\norm{p\xi}^2-2\Re\la p_np\xi,p\xi\ra
	=\norm{p_np\xi}^2+\norm{p\xi}^2-2\norm{p_np\xi}^2\\
	&=\norm{p\xi}^2-\norm{p_np\xi}^2.
	\end{align*}
	But since $pp\xi=p\xi$, we have $\norm{p_np\xi}\to \norm{p\xi}$ so last member of this equation tends to zero. We conclude that our second term from \eqref{ineq: show that cv ptwise of norm is same as strong cv for proj} also tends to zero as wanted. 
\end{proof}

\subsection{The space of von Neumann subalgebras}

Let $M$ be a von Neumann algebra (with separable predual $M_*$) and $\mathcal{S}(M)$ be the set of von Neumann sublagebras of $M$ with the smallest topology making the maps $\mathcal{S}(M)\rightarrow [0,+\infty)$, $N\mapsto\Vert\omega\vert_N\Vert$ continuous, for all $\omega\in M_*$. This topology has been introduced by Maréchal \cite{marechalTopologieStructureBorelienne1973} who also has shown that $\mathcal{S}(M)$ is Polish. The proof of the Polishness of the topology has been later fixed in \cite{FLMMP24}. We record the following simple Lemmas for later use. The first one is a direct consequence of \cite[Lemma 3.4]{FLMMP24}.

\begin{lem}\label{LemInclusion}
The set $\{(A,B)\in\mathcal{S}(M)\times\mathcal{S}(M)\,:\,A\subseteq B\}$ is closed in $\mathcal{S}(M)\times\mathcal{S}(M)$. In particular, for any $N\in\mathcal{S}(M)$ the subset $\mathcal{S}(N)\subseteq\mathcal{S}(M)$ is closed.
\end{lem}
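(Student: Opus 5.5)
The plan is to show directly that the complement of the set $\mathcal I:=\{(A,B):A\subseteq B\}$ is open in $\mathcal{S}(M)\times\mathcal{S}(M)$; this gives the first assertion without relying on \cite[Lemma 3.4]{FLMMP24}. The second assertion then follows at once: $\mathcal{S}(N)=\{A\in\mathcal{S}(M): (A,N)\in\mathcal I\}$ is the preimage of the closed set $\mathcal I$ under the continuous map $A\mapsto (A,N)$.

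For the first assertion, fix $(A,B)$ with $A\not\subseteq B$. The key step is to turn this non-inclusion into a strict inequality between the two continuous functionals $C\mapsto\Vert\omega\vert_C\Vert$. For a von Neumann subalgebra $C$, the Hahn–Banach theorem for the $\sigma$-weak topology gives
\[
\Vert\omega\vert_C\Vert=\operatorname{dist}(\omega,C_\perp),
\qquad C_\perp=\{\theta\in M_*:\theta\vert_C=0\},
\]
where $C_\perp$ is the preannihilator. Since $A$ and $B$ are $\sigma$-weakly closed, $A\not\subseteq B$ means $B_\perp\not\subseteq A_\perp$.

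So I pick $\theta\in B_\perp\setminus A_\perp$. Then $\Vert\theta\vert_B\Vert=0$ while $\Vert\theta\vert_A\Vert>0$. Choosing $0<\epsilon<\Vert\theta\vert_A\Vert/2$, I consider
\[
U=\{(A',B'): \Vert\theta\vert_{A'}\Vert>\Vert\theta\vert_A\Vert-\epsilon,\ \Vert\theta\vert_{B'}\Vert<\epsilon\},
\]
which is an open neighbourhood of $(A,B)$ by definition of the Maréchal topology. If some $(A',B')\in U$ had $A'\subseteq B'$, then restricting to the smaller algebra could only decrease the norm, so $\Vert\theta\vert_{A'}\Vert\le\Vert\theta\vert_{B'}\Vert<\epsilon$. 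Combined with $\Vert\theta\vert_{A'}\Vert>\Vert\theta\vert_A\Vert-\epsilon>\epsilon$, this is a contradiction. Hence $U$ lies in the complement of $\mathcal I$, and $\mathcal I$ is closed.

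The main point requiring care is the duality step producing $\theta$. It uses only that von Neumann subalgebras are $\sigma$-weakly closed subspaces, so that $C=(C_\perp)^\perp$ by the bipolar theorem. The remaining steps are routine monotonicity of restriction norms.
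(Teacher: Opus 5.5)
Your argument is correct, and it differs from the paper, which gives no proof of its own and simply deduces the lemma from \cite[Lemma~3.4]{FLMMP24}.

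You work directly from the defining functionals $C\mapsto\Vert\omega\vert_C\Vert$. Because $B$ is $\sigma$-weakly closed, the bipolar theorem gives a normal functional $\theta\in B_\perp\setminus A_\perp$. Its values $\Vert\theta\vert_{A}\Vert>0=\Vert\theta\vert_B\Vert$ then give an explicit basic open neighbourhood of $(A,B)$ that misses the inclusion set, and monotonicity of restriction norms closes the argument. This is the same Hahn--Banach separation the paper uses in the proof of Theorem~\ref{thm: marechal same as one lip} to show that weak accumulation points of $\mathbb E^\varphi_{N_{\phi(n)}}(x)$ lie in $N$.

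Your route is self-contained. It needs no external convergence lemma, and it shows directly that the complement is open, so no sequential or metrizability considerations enter and it works equally well with nets. The paper's citation buys brevity and places the lemma inside the general framework for Maréchal convergence developed in \cite{FLMMP24}.

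One minor remark: the formula $\Vert\omega\vert_C\Vert=\mathrm{dist}(\omega,C_\perp)$ is true but not actually needed. Only the existence of $\theta$ is used.
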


The following Lemma has been known to specialists for a long time \cite{tsukadaStrongLimitNeumann1985,haagerupEffrosMarechalTopologySpace1998}. We provide below an easy and modern proof for reader's convenience.

\begin{lem}\label{lem: convergence increasing}
Let $M$ be a von Neumann algebra. The following holds.
\begin{enumerate}
\item If $N_n$ is an increasing sequence in $\mathcal{S}(M)$ then $N_n\to_n (\bigcup_k N_k)''$.
\item If $N_n$ is a decreasing sequence  in $\mathcal{S}(M)$ then $N_n\to_n \bigcap_k N_k$.
\end{enumerate}
\end{lem}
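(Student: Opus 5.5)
We work directly with the defining functionals of the Maréchal topology: $N_n\to N$ means exactly that $\Vert\omega|_{N_n}\Vert\to\Vert\omega|_N\Vert$ for every $\omega\in M_*$. In each case we prove a $\liminf$ and a $\limsup$ inequality.

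\textbf{Increasing case.} Put $N=(\bigcup_k N_k)''$.

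The upper bound is immediate. Since $N_n\subseteq N_{n+1}\subseteq N$, the sequence $\Vert\omega|_{N_n}\Vert$ is nondecreasing and bounded by $\Vert\omega|_N\Vert$.

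For the reverse inequality, fix $\epsilon>0$ and pick $x\in N_1$ with $|\omega(x)|>\Vert\omega|_N\Vert-\epsilon$. The set $A=\bigcup_k N_k$ is a unital $*$-algebra whose weak closure is $N$. By Kaplansky's density theorem, the unit ball of $A$ is $\sigma$-strongly dense in $N_1$. Since $\omega$ is normal, there is therefore $y\in A$ with $\Vert y\Vert\le 1$ and $|\omega(y)|>\Vert\omega|_N\Vert-\epsilon$. This $y$ lies in some $N_k$, so for $n\ge k$ we get $\Vert\omega|_{N_n}\Vert\ge\Vert\omega|_N\Vert-\epsilon$. Together with the upper bound, this gives convergence.

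\textbf{Decreasing case.} Put $N=\bigcap_k N_k$.

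Here monotonicity gives the lower bound for free: $\Vert\omega|_{N_n}\Vert$ is nonincreasing and bounded below by $\Vert\omega|_N\Vert$. So it suffices to show that $L:=\lim_n\Vert\omega|_{N_n}\Vert\le\Vert\omega|_N\Vert$.

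For each $n$, choose $x_n\in (N_n)_1$ with $|\omega(x_n)|\ge\Vert\omega|_{N_n}\Vert-1/n$. The unit ball of $M$ is weak$^*$ compact and metrizable, because $M_*$ is separable. Hence a subsequence $x_{n_j}$ converges $\sigma$-weakly to some $x\in M_1$.

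For each fixed $k$, the terms $x_{n_j}$ with $n_j\ge k$ all lie in $(N_k)_1$, which is $\sigma$-weakly closed. So $x\in N_k$ for every $k$, i.e.\ $x\in N_1$. By normality, $|\omega(x)|=\lim_j|\omega(x_{n_j})|=L$. Therefore $\Vert\omega|_N\Vert\ge L$, as required.

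\textbf{Main obstacle.} The only delicate point is the lower bound in the increasing case. The union $\bigcup_k N_k$ is not weakly closed, so the norm of $\omega$ on $N$ could a priori be attained only by elements outside the union. Kaplansky's density theorem is exactly what resolves this: it supplies approximants in the union that still lie in the unit ball. Normality of $\omega$ then transfers $\sigma$-strong approximation into approximation of the values $\omega(y)$.

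In the decreasing case, the corresponding step is handled by compactness of the unit ball together with weak closedness of each $N_k$. No analogue of Kaplansky is needed there.
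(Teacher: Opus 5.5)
Your proof is correct and follows essentially the same route as the paper: monotonicity gives one inequality in each case. For the other inequality, you use Kaplansky's density theorem together with normality of $\omega$ in the increasing case, and weak$^*$ compactness of $M_1$ together with $\sigma$-weak closedness of each $N_k$ in the decreasing case. The only cosmetic difference is that you work with $\epsilon$-approximate (resp.\ $1/n$-approximate) norming elements, whereas the paper picks elements attaining $\Vert\omega\vert_N\Vert$ and $\Vert\omega\vert_{N_n}\Vert$ and extracts a strongly convergent sequence from Kaplansky.
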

\begin{proof}
$(1)$. Let $\omega\in M_*$ and note that, since $N_n\subseteq N_{n+1}\subseteq N:=(\bigcup_k N_k)''$ one has $\Vert\omega\vert_{N_n}\Vert\leq\Vert\omega\vert_{N_{n+1}}\Vert\leq\Vert\omega\vert_{N}\Vert$ for all $n\in N$ hence, the sequence $(\Vert\omega\vert_{N_n}\Vert)_n$ is convergent and $\lim\Vert\omega\vert_{N_n}\Vert\leq\Vert\omega\vert_N\Vert$. Let $x\in N_1$ such that $\Vert \omega\vert_N\Vert=\vert\omega(x)\vert$. By Kaplansky's theorem there is a sequence $(x_k)_k$ in $\bigcup N_n$, $\Vert x_k\Vert\leq 1$, converging strongly to $x$. Since $(N_n)_n$ is increasing, there is an increasing map $\phi\,:\,\N\rightarrow\N$ such that $x_k\in N_{\phi(k)}$ for all $k$. Hence,
$$\Vert\omega\vert_N\Vert=\vert\omega(x)\vert=\lim\vert\omega(x_n)\vert\leq\lim\Vert\omega_{N_{\phi(n)}}\Vert=\lim\Vert\omega_{N_{n}}\Vert\leq \Vert\omega\vert_{N}\Vert.$$
So the sequence $(\Vert\omega\vert_{N_n}\Vert)_n$ converges to  $\Vert\omega\vert_{N}\Vert$. This shows that $N_n\rightarrow N$.

\vspace{0.2cm}

\noindent$(2)$. Let $\omega\in M_*$ and note that, since $N:=\bigcap_k N_k\subseteq N_{n+1}\subseteq N_n$ one has $\Vert\omega\vert_{N}\Vert\leq\Vert\omega\vert_{N_{n+1}}\Vert \leq\Vert\omega\vert_{N_n}\Vert$ for all $n\in N$ hence, the sequence $(\Vert\omega\vert_{N_n}\Vert)_n$ is convergent and $\Vert\omega\vert_N\Vert\leq\lim\Vert\omega\vert_{N_n}\Vert$. For each $n\in\N$ let $x_n\in(N_n)_1$ be such that $\Vert\omega\vert_{N_n}\Vert=\omega(x_n)$. Since $(x_n)_n$ is a sequence in $M_1$ which is $\sigma$-weakly compact, there exists $\phi\,:\,\N\rightarrow\N$ increasing and $x\in M_1$ such that $x_{\phi(n)}\rightarrow x$ $\sigma$-weakly. Fix $n\in\N$ and note that, for all $k\geq n$ one has $x_{\phi(k)}\in N_{\phi(k)}\subset N_{\phi(n)}\subset N_n$. Hence, $x\in N_n$ for all $n\in\N$ therefore $x\in N$ and:
$$\Vert\omega\vert_N\Vert\geq\vert\omega(x)\vert=\lim_n\vert\omega(x_{\phi(n)})\vert=\lim_n\Vert\omega\vert_{N_{\phi(n)}}\Vert=\lim\Vert\omega\vert_{N_n}\Vert\geq\Vert\omega\vert_N\Vert.$$
It concludes the proof.
\end{proof}

\section{Spaces of ucp maps}\label{sec:ucp}

In this section, we fix once and for all two von Neumann algebras $M$ and $N$ equipped with normal faithful states (nfs) $\varphi$ and $\rho$ respectively, and 
denote by 
$\ucp\varphi\rho MN$ the space of all state-preserving (also called conservative) unital completely positive map (ucp) maps $M\rightarrow N$.

\begin{df}	
A state-preserving ucp map  $\Phi\,:\, M\rightarrow N$ is called a $(\varphi,\rho)$\textbf{-Markov map} if $\Phi\circ \sigma_t^\varphi = \sigma_t^\rho\circ\Phi$ for all $t\in\mathbb{R}$, $\sigma_t^\varphi$ and $\sigma_t^\rho$ denote the modular groups of $\varphi$ and $\rho$ respectively.
\end{df}

Note that every ucp map is a contraction with respect to the operator norm and, by the Kadison-Schwarz inequality \cite[Prop. 3.3]{paulsenCompletelyBoundedMaps2003}, $$\Phi(x)^*\Phi(x)\leq \Phi(x^*x)\text{ for any ucp map }\Phi\,:\, M\rightarrow N\text{ and any }x\in M.$$ 
Thus, if $\Phi$ is state preserving, then $$\norm{\Phi(x)}_{2,\rho}^2=\rho(\Phi(x)^*\Phi(x))\leq\rho(\Phi(x^*x))=\varphi(x^*x)=\norm x_{2,\varphi}^2, \text{ }\forall\text{ }x\in M,$$
so that state-preserving ucp maps are contractions with the respective $2$-norms on $M$ and $N$ induced by the states $\varphi$ and $\rho$ respectively.

\begin{remark}\label{RmkNormal}
It is well known that any ucp map $\Phi\,:\,M\rightarrow N$ preserving faithful states $\varphi\in M_*$ and $\rho\in N_*$ is automatically faithful and normal. Indeed, the faithfulness being a direct consequence of the faithfulness of the states, it suffices to show that, for all $\omega\in N_*$, one has $\omega\circ\Phi\in M_*$. Suppose first that there exists $a,b\in N$ such that $b$ is entire with respect to $\sigma_t^\rho$ (see Definition 2.2  in \cite[Chapter~VIII]{Tak2}) and $\omega=b.\rho.a$ i.e. $\omega(x)=\rho(axb)$ for all $x\in N$. Then,
\begin{eqnarray*}
\vert\omega\circ\Phi(x)\vert&=&\rho(a\Phi(x)b)=\vert\rho(\sigma_i^\rho(b)a\Phi(x))\vert\leq\Vert\Phi(x)\Vert_{2,\rho}\Vert a^*\sigma_i^\rho(b)^*\Vert_{2,\rho}\\
&\leq&\Vert x\Vert_{2,\varphi}\Vert a^*\sigma_i^\rho(b)^*\Vert_{2,\rho}\quad\text{for all }x\in M.
\end{eqnarray*}
Hence, $\omega\circ\Phi$ is strongly continuous so $\omega\circ\Phi\in M_*$. Consider now the vector space $X:=\{\omega\in N_*\,:\,\omega\circ\Phi\in M_*\}$ and note that $X$ is norm closed in $M_*$. Moreover, it contains $\{b.\rho.a\,:\,a,b\in N\text{ and }b\text{ is analytic}\}$ which is norm dense in $M_*$ since the set of analytic elements in strongly dense in $N$.
\end{remark}

The preceding discussion allows us to view the set of state-preserving ucp maps from $M$ to $N$ as a subspace of $\onelip((M,\norm\cdot_{2,\varphi}),(N,\norm\cdot_{2,\rho}))$, and since linear maps are determined by their restriction to the unit ball, this can further be identified to a subspace of $\onelip\left(\big(M_1,\norm\cdot_{2,\varphi}\big),\big(N_1,\norm\cdot_{2,\rho}\big)\right)$, which we denote $\onelip_1(M_\varphi,N_\rho)$.

\begin{prop}\label{ConservativeucpisPolish}
The collection of state-preserving ucp maps from $(M,\varphi)$ to $(N,\rho)$ is a closed subspace of $\onelip_1(M_\varphi,N_\rho)$ and therefore forms a Polish space. Furthermore, the set of
$(\varphi,\rho)$-Markov maps is closed therein, hence Polish as well.
\end{prop}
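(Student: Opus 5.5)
Since $\onelip_1(M_\varphi,N_\rho)$ is Polish by Theorem~\ref{thm: one lip is polish}, being metrizable we only need sequential closedness. Take a sequence $(\Phi_n)$ of state-preserving ucp maps, viewed as restrictions to $M_1$, converging pointwise in $\norm\cdot_{2,\rho}$ to some $1$-Lipschitz $f\colon M_1\to N_1$. The plan is to extend $f$ to a linear state-preserving ucp map $\Phi$ with $\Phi_n\to\Phi$ pointwise. By Lemma~\ref{lem: topologies on ball are the same}, $\Phi_n(x)\to f(x)$ strongly for every $x\in M_1$, and hence also weakly.

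\textbf{Step 1 (linearity).} For $x,y\in M$ and scalars $\lambda,\mu$, rescale so that all of $x,y$ and $\lambda x+\mu y$ lie in $M_1$. The identity $\Phi_n(\lambda x+\mu y)=\lambda\Phi_n(x)+\mu\Phi_n(y)$ passes to the limit in $\norm\cdot_{2,\rho}$. Hence $f$ is additive and homogeneous on $M_1$ wherever this makes sense, and it extends uniquely to a linear map $\Phi\colon M\to N$ by $\Phi(x)=\norm x f(x/\norm x)$. This $\Phi$ is contractive in operator norm, since $f$ takes values in $N_1$.

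\textbf{Step 2 (unital, state-preserving, completely positive).} Unitality is immediate: $\Phi_n(1)=1$. State preservation follows from $\rho(\Phi_n(x))=\varphi(x)$ and the fact that $\rho$ is $\norm\cdot_{2,\rho}$-continuous, as $|\rho(y)|\leq\norm y_{2,\rho}$.

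For complete positivity, fix $k$ and a positive $X=(x_{ij})\in M_k(M)$, and a vector $\eta=(\eta_i)\in\H^k$. Then $\la \Phi_n^{(k)}(X)\eta,\eta\ra=\sum_{ij}\la\Phi_n(x_{ij})\eta_j,\eta_i\ra\geq 0$. Each term converges, since $\Phi_n(x_{ij})\to\Phi(x_{ij})$ weakly (after scaling into the unit ball). So $\Phi^{(k)}(X)\geq0$.

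Thus $\Phi$ is a state-preserving ucp map restricting to $f$, and the set of such maps is closed. This set is then Polish, as a closed subset of a Polish space.

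\textbf{Step 3 (Markov maps).} Suppose now that each $\Phi_n$ commutes with the modular groups, and fix $t\in\R$ and $x\in M_1$. The automorphism $\sigma_t^\varphi$ preserves $\varphi$, so it maps $M_1$ to $M_1$. Pointwise convergence then gives $\Phi_n(\sigma_t^\varphi(x))\to\Phi(\sigma_t^\varphi(x))$ in $\norm\cdot_{2,\rho}$.

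On the other side, $\sigma_t^\rho$ is $\norm\cdot_{2,\rho}$-isometric because it preserves $\rho$. Hence $\sigma_t^\rho(\Phi_n(x))\to\sigma_t^\rho(\Phi(x))$. The two limits must coincide, so $\Phi$ is Markov and the Markov maps form a closed subset, hence a Polish space.

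\textbf{Main obstacle.} The only point needing care is the justification in Step 2 that norm-$2$ convergence on the ball yields weak operator convergence in the representation used to test positivity. Lemma~\ref{lem: topologies on ball are the same} handles this precisely because all the elements involved lie in bounded sets. The rest is bookkeeping with rescaling.
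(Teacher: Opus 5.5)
Your proof is correct and follows essentially the same route as the paper: each defining condition (linearity on the ball, unitality, state preservation, complete positivity tested against vectors, and commutation with the modular groups) passes to pointwise $\norm\cdot_{2,\rho}$-limits. The positivity test goes through via Lemma~\ref{lem: topologies on ball are the same} in your version, and via continuity of multiplication on $N_1$ in the paper's. One cosmetic slip: $\sigma_t^\varphi$ maps $M_1$ into $M_1$ because it is a $*$-automorphism, hence operator-norm isometric, not because it preserves $\varphi$; state preservation is instead what makes $\sigma_t^\rho$ a $\norm\cdot_{2,\rho}$-isometry, and you use that part correctly.
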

\begin{proof}	
We first note that the space of restrictions of linear maps from $M$ to $N$ to the unit ball of $M$ is precisely the space of maps $\phi: M_1\to N$ such that
$\phi(\lambda x)=\lambda \phi(x)$ for every $x\in M_1$ and $\lambda\in\C$ such that $\lambda x\in M_1$ and $\phi(x+y)=\phi(x)+\phi(y)$ for every $x,y\in M_1$ such that $x+y\in M_1$. 
In particular, we thus have that the space of $1$-Lipschitz linear maps from $(M,\norm\cdot_{2,\varphi})$ to $(N,\norm\cdot_{2,\rho})$ can be identified with a closed subspace of $\onelip_1(M_\varphi,N_\rho)$. 
Recall now that the unit balls of $M$ and $N$ are separable complete metric spaces with respect to the two-norms induced by $\varphi$ and $\rho$ respectively, therefore $\onelip_1(M_\varphi,N_\rho)$ is a Polish space by Theorem \ref{thm: one lip is polish}. 
Further, since states are continuous with respect to associated two-norms, the space of such maps preserving states is also closed. It is also easily seen that maps for which $\phi(1_M)=1_N$ form a closed subspace of $\onelip_1(M_\varphi,N_\rho)$. 
Finally, complete positivity means that for each $n\in\N$ and for every positive $X=(x_{i,j})\in M_n(M)$, and every $(y_1,...,y_n)$ with $y_i\in (N)_1$, we have $\sum_{i,j}\la \phi(X_{ij})y_i,y_j\ra_{\LL^2(N,\rho)}\geq 0$ and by using the continuity of multiplication on $N_1$ with respect to $\norm{\cdot}_{2,\rho}$, it follows that maps which satisfy these conditions form a closed subspace of $\onelip_1(M_\varphi,N_\rho)$.

For the last statement, observe that, since $\sigma_t^\rho$ is strongly continuous for all $t\in\R$, the set of $(\varphi,\rho)$-Markov maps is a closed subset of the space of state-preserving ucp maps.
\end{proof}

\begin{prop}\label{proposition: equivalent topologies on ucp}
Consider the space of state-preserving ucp maps from $(M,\varphi)$ to $(N,\rho)$. Then the following topologies on this space coincide
\begin{enumerate}[(1)]
\item \label{item: pointwise strong star cv} pointwise convergence in strong-$\ast$ topology.
\item \label{item: pointwise strong cv} pointwise convergence in strong topology. 
\item \label{item: one lip topology} topology induced from the space $\onelip_1(M_\varphi,N_\rho)$.
\end{enumerate}
\end{prop}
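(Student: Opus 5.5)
We will prove the chain of comparisons (1) $\supseteq$ (2) $\supseteq$ (3) $\supseteq$ (1) between the three topologies, where ``$\supseteq$'' means ``is finer than''. Since all three topologies are first countable on this space (the third is Polish by Proposition~\ref{ConservativeucpisPolish}; the first two are defined pointwise), we argue with nets throughout. That way we never need to know in advance that (1) and (2) are metrizable.

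The first two comparisons are straightforward. Pointwise strong-$\ast$ convergence trivially implies pointwise strong convergence, so (1) is finer than (2). Next, suppose $\Phi_i(x)\to\Phi(x)$ strongly for every $x\in M$. The elements $\Phi_i(x)$ lie in the ball of radius $\norm x$, and $\norm{y}_{2,\rho}=\norm{y\xi_\rho}$ is a strongly continuous function of $y$. Hence $\norm{\Phi_i(x)-\Phi(x)}_{2,\rho}\to0$ for every $x\in M_1$. This is exactly convergence in $\onelip_1(M_\varphi,N_\rho)$, so (2) is finer than (3).

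The main step is showing that (3) is finer than (1). Assume $\norm{\Phi_i(x)-\Phi(x)}_{2,\rho}\to 0$ for all $x\in M_1$. Replacing $x$ by $x^*$, which is still in $M_1$, and using $\Phi_i(x^*)=\Phi_i(x)^*$, we also get $\norm{\Phi_i(x)^*-\Phi(x)^*}_{2,\rho}\to0$. All the elements involved lie in $N_1$. By Lemma~\ref{lem: topologies on ball are the same}, on $N_1$ the $\norm\cdot_{2,\rho}$-topology coincides with the strong topology. Therefore $\Phi_i(x)\to\Phi(x)$ and $\Phi_i(x)^*\to\Phi(x)^*$ strongly, which is strong-$\ast$ convergence. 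Linearity and scaling extend this from $M_1$ to all of $x\in M$.

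One subtlety is that Lemma~\ref{lem: topologies on ball are the same} is stated as an equality of topologies on $N_1$, so applying it to nets is legitimate. The strong-$\ast$ part needs no separate treatment thanks to the adjoint trick. The only genuine input is therefore the lemma, together with the contractivity of ucp maps in operator norm, which keeps all the images inside $N_1$. I expect no real obstacle beyond keeping track of this bookkeeping.
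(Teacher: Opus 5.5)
Your argument is correct and is essentially the paper's. The identity $\Phi(x^*)=\Phi(x)^*$ for ucp maps handles the strong-$\ast$ versus strong comparison. Lemma~\ref{lem: topologies on ball are the same}, applied on $N_1$ (with contractivity keeping the images there), identifies pointwise strong convergence with the $\onelip_1(M_\varphi,N_\rho)$ topology.

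The only blemish is your aside that topologies (1) and (2) are first countable a priori. Being "defined pointwise" over the uncountable set $M$ does not justify this. It is harmless, though, since you argue with nets throughout.
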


\begin{proof}
The equivalence of $(1)$ and $(2)$ follows immediately from the fact that ucp maps commute with the adjoint operation while the equivalence of $(2)$ and $(3)$ follows from Lemma \ref{lem: topologies on ball are the same}.\end{proof}

\subsection{Weak containment in the space of ucp maps}

Let ${}_M\mathcal H_N$ and ${}_M\mathcal K_N$ be Hilbert $M$-$N$ bimodules, where $M,N$ are von Neumann algebras, with induced $*$-representations on $M\odot N^{op}$ denoted $\pi_{\mathcal H},\pi_{\mathcal K}: M\odot N^{op}\to \mathbb B(\mathcal H),\mathbb B(\mathcal K)$.

We say that ${}_M\mathcal H_N$ is \textbf{weakly contained} in ${}_M\mathcal K_N$, and write
$$
{}_M\mathcal H_N \prec {}_M\mathcal K_N,
$$
if for every $T\in M\odot N^{op}$, we have
$$
\|\pi_{\mathcal H}(T)\|\leq \|\pi_{\mathcal K}(T)\|.
$$
As is easy to see, equivalently, the $C^*$-seminorm on $M\odot N^{op}$ induced by $\mathcal H$ is dominated by the $C^*$-seminorm induced by $\mathcal K$.

It is well known that state-preserving ucp maps $(M,\varphi)\to (N,\rho)$ are in one-to-one correspondence
with cyclic $M-N$ bimodules $(\mathcal H,\xi)$ such that $\varphi=\la \cdot \xi,\xi\ra$
and $\rho=\la \xi\cdot,\xi\ra$.
Let us denote by $(\mathcal H_\Phi,\xi_\Phi)$ be the cyclic bimodule as above 
obtained from a ucp trace-preserving map $\Phi$, obtained by separation and completion of 
the tensor product $M\odot \LL^2(N,\rho)$ endowed with the pre-Hilbert space structure given by 
$$
\la x_1\odot \xi_1, x_2\odot \xi_2 \ra_\Phi= \la\Phi(x_2^*x_1)\xi_1,\xi_2 \ra_{\LL^2(N,\rho)}.
$$
(with corresponding norm denoted $\|\cdot\|_\Phi$). Then, as above, we have weak containment of cyclic bimodules, $\bimod M {\mathcal H_{\Psi}} N \prec  \bimod M {\mathcal H_{\Phi}} N$, when for every $T\in M\odot N^{op}$, we have $\|\pi_{{\mathcal H}_\Psi}(T)\|\leq \|\pi_{{\mathcal H}_\Phi}(T)\|$.

The following general statement will be useful when considering (co) amenability 
in the space of von Neumann subalgebras admitting a $\varphi$-preserving conditional
expectation. 

\begin{prop}\label{prop: weak-containment-closed}
Given $\Phi\in \ucp\varphi\rho MN$, the set of all $\Psi\in\ucp\varphi\rho MN$
such that $\bimod M {\mathcal H_{\Psi}} N \prec  \bimod M {\mathcal H_{\Phi}} N$
is closed in the pointwise ultraweak  topology,
in particular it is closed in the Polish topology of $\ucp\varphi\rho MN$.
\end{prop}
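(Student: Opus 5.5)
The plan is to express, for each fixed $T\in M\odot N^{op}$, the norm $\|\pi_{\mathcal H_\Psi}(T)\|$ as a supremum of quantities that depend continuously on $\Psi$ in the pointwise ultraweak topology. This makes $\Psi\mapsto \|\pi_{\mathcal H_\Psi}(T)\|$ lower semicontinuous, and then
$$\{\Psi:\ \|\pi_{\mathcal H_\Psi}(T)\|\leq \|\pi_{\mathcal H_\Phi}(T)\|\}$$
is closed for each $T$. Intersecting over all $T$ gives the claimed closedness.

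\textbf{Expressing the norm.} The vectors $\sum_k x_k\odot \xi_k$ with $x_k\in M$ and $\xi_k$ in a dense subset of $\LL^2(N,\rho)$ (say $\xi_k=y_k\rho^{1/2}$, $y_k\in N$) span a dense subspace of $\mathcal H_\Psi$ for every $\Psi$. Write $T=\sum_i a_i\otimes b_i^{op}$. Then for such a vector $\eta$,
$$\|\pi_\Psi(T)\eta\|_\Psi^2=\sum_{i,j,k,l}\la \Psi(x_l^*a_j^*a_ix_k)\, \xi_k b_i b_j^*,\ \xi_l\ra_{\LL^2(N,\rho)},$$
up to the correct placement of the right action, and
$$\|\eta\|_\Psi^2=\sum_{k,l}\la\Psi(x_l^*x_k)\xi_k,\xi_l\ra.$$
Both expressions are finite sums of terms $\la \Psi(z)\zeta,\zeta'\ra$ with $z\in M$ and $\zeta,\zeta'\in\LL^2(N,\rho)$. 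These are continuous in $\Psi$ for the pointwise weak topology. Since $\Psi(z)$ stays in the ball of radius $\|z\|$, weak and ultraweak agree there, so they are also continuous for the pointwise ultraweak topology.

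We then have
$$\|\pi_\Psi(T)\|^2=\sup_\eta \frac{\|\pi_\Psi(T)\eta\|_\Psi^2}{\|\eta\|_\Psi^2},$$
where the supremum runs over the fixed family of formal sums $\eta$ with $\|\eta\|_\Psi>0$. To avoid the denominator issue, I would rewrite the inequality $\|\pi_\Psi(T)\|^2\le C$ with $C=\|\pi_{\mathcal H_\Phi}(T)\|^2$ as the family of inequalities
$$\|\pi_\Psi(T)\eta\|_\Psi^2\le C\,\|\eta\|_\Psi^2\qquad\text{for every formal sum }\eta.$$
Each such inequality is a closed condition in $\Psi$. The inequalities on the dense subspace imply the bound on all of $\mathcal H_\Psi$ because $\pi_\Psi(T)$ is bounded.

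\textbf{Technical points.} The main care is that $\pi_\Psi(T)$ is indeed bounded with the right-action formula valid on these vectors. The right action of $N$ on $\mathcal H_\Psi$ is $x\odot\xi\mapsto x\odot \xi b$ via $\LL^2(N,\rho)$, and it is bounded since it commutes with the left action of $\Psi(\cdot)$ on $\LL^2$. The left action of $M$ is bounded by complete positivity (Stinespring). With boundedness in hand, density gives the norm.

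Finally, the pointwise ultraweak topology is coarser than the Polish topology of Proposition~\ref{proposition: equivalent topologies on ucp}, since strong convergence implies weak convergence. A set closed in the coarser topology is therefore closed in the Polish one, which gives the last assertion.
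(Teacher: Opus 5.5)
Your proposal is correct and is essentially the paper's proof. For fixed $T\in M\odot N^{op}$ and a fixed formal vector $\zeta\in M\odot\LL^2(N,\rho)$, both $\|\zeta\|_\Psi^2$ and $\|\pi_\Psi(T)\zeta\|_\Psi^2$ are finite sums of coefficients $\langle\Psi(z)\eta,\eta'\rangle$, so the inequality $\|\pi_\Psi(T)\zeta\|_\Psi\leq\|\pi_{\mathcal H_\Phi}(T)\|\,\|\zeta\|_\Psi$ passes to pointwise ultraweak limits, and density of $M\odot\LL^2(N,\rho)$ in $\mathcal H_\Psi$ finishes; the only difference is that you write the set as an intersection of closed sets indexed by $(T,\zeta)$, while the paper passes to the limit along a convergent net.
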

\begin{proof}

Let $(\Psi_i)_i$ be a net in the set
$$
U:=\{\Lambda\in \ucp\varphi\rho MN\mid {}_M(\mathcal H_\Lambda)_N \prec {}_M(\mathcal H_\Phi)_N\}
$$
which converges pointwise ultraweakly to $\Psi\in \ucp\varphi\rho MN$. We want to show that $\Psi \in U$. Fix $T=\sum_{p=1}^m a_p\otimes b_p^{op}\in M\odot N^{op}$ and a vector 
$\zeta=\sum_{j=1}^n z_j\odot \eta_j\in M\odot L^2(N,\rho)$. Then, we have  $\pi_\Theta(T)\zeta=\sum_{p=1}^m\sum_{j=1}^n a_p z_j\odot \eta_j b_p$
and thus,
$$
\|\zeta\|_\Theta^2
=
\sum_{i,j=1}^n
\langle \Theta(z_j^*z_i)\eta_i,\eta_j\rangle
\text{ and }
\|\pi_\Theta(T)\zeta\|_\Theta^2
=
\sum_{p,q=1}^m\sum_{i,j=1}^n
\left\langle
\Theta(z_j^*a_q^*a_p z_i)\,\eta_i b_p,\,
\eta_j b_q
\right\rangle.
$$
for any $\Theta\in \ucp\varphi\rho MN$. It is then immediate that if $\Psi_i\rightarrow \Psi$ in pointwise ultraweak topology, then we have $\|\zeta\|_{\Psi_i}^2\rightarrow \|\zeta\|_\Psi^2$ and $\|\pi_{\Psi_i}(T)\zeta\|_{\Psi_i}^2\rightarrow \|\pi_{\Psi}(T)\zeta\|_\Psi^2$.

Now, for each $i$, the assumption ${}_M\mathcal H_{\Psi_i}{}_N \prec {}_M\mathcal H_\Phi{}_N$ is equivalent to
$$\|\pi_{\mathcal H_{\Psi_i}}(T)\|\le \|\pi_{\mathcal H_\Phi}(T)\|.$$
Hence, for every vector $\zeta \in M\odot L^2(N,\rho)$, we have $\|\pi_{\Psi_i}(T)\zeta\|_{\Psi_i}\le \|\pi_{\mathcal H_\Phi}(T)\|\,\|\zeta\|_{\Psi_i}$. Passing to the limit, we have
$\|\pi_\Psi(T)\zeta\|_\Psi\le \|\pi_{\mathcal H_\Phi}(T)\|\,\|\zeta\|_\Psi
\,\forall\zeta\in M\odot L^2(N,\rho)$. Since the image of $M\odot L^2(N,\rho)$ is dense in $\mathcal H_\Psi$, it now follows that $$\|\pi_{\mathcal H_\Psi}(T)\|\le \|\pi_{\mathcal H_\Phi}(T)\|.$$
As $T\in M\odot N^{op}$ was arbitrary, this shows that $\Psi\in U$.\end{proof}

\section{The space of von Neumann algebras with state preserving conditional expectation}\label{sec:cond_exp}

In this section, we study the space of subalgebras of a given von-Neumann algebra $M$ equipped with a faithful normal state $\varphi$, which possess a $\varphi$-preserving conditional expectation. Our aim is to exhibit that this space has a natural Polish topology, which coincides with the restriction of the Effros-Maréchal topology on the space of all subalgebras of $M$.

Recall that, by Takesaki's Theorem, a von Neumann subalgebra $N\subset M$ has a $\varphi$-preserving conditional expectation $\mathbb{E}_N^\varphi:M\rightarrow N$ if and only if $\sigma_t^\varphi(N)=N$ for all $t\in\R$. Moreover, $\E^\varphi_N$ is the unique ucp map $M\rightarrow N$ such that $\varphi\circ \E_N^\varphi=\varphi$ and $\E_N^\varphi(x)=x$ for all $x\in N$. Such a map is automatically normal and is a $(\varphi,\varphi)$-Markov map. Note that $\E_N^\varphi$ is a projection onto $N$ and $N$ is the set of fixed points of $\E^\varphi_N$. The following Lemma is well known.

\begin{lem}\cite[Lemma 6.4]{bannonNoncommutativeJoinings2018} Let $M$ be a von Neumann algebra with a nfs $\varphi$. For any $\varphi$-preserving ucp map $\Phi\,:\, M\rightarrow M$, the set of fixed points
$$M^\Phi:=\{x\in M\,:\, \Phi(x)=x\}$$
is a von Neumann subalgebra.
\end{lem}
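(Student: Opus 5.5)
The set $M^\Phi$ is the kernel of the linear map $\Phi-\mathrm{id}$, so it is a linear subspace. It contains $1$ because $\Phi$ is unital, and it is closed under adjoints because ucp maps are self-adjoint, i.e. $\Phi(x^*)=\Phi(x)^*$. It is $\sigma$-weakly closed because $\Phi$ is normal (Remark~\ref{RmkNormal}), so $\Phi-\mathrm{id}$ is $\sigma$-weakly continuous. By the bicommutant theorem it therefore suffices to show that $M^\Phi$ is closed under multiplication. We plan to deduce this from the Kadison--Schwarz inequality combined with faithfulness of $\varphi$, via the multiplicative domain of $\Phi$.

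Fix $x\in M^\Phi$. Kadison--Schwarz gives $\Phi(x^*x)-\Phi(x)^*\Phi(x)\ge 0$, and since $\Phi(x)=x$ this positive element equals $\Phi(x^*x)-x^*x$. Applying $\varphi$ and using $\varphi\circ\Phi=\varphi$ gives
\[
\varphi\bigl(\Phi(x^*x)-x^*x\bigr)=\varphi(x^*x)-\varphi(x^*x)=0 .
\]
By faithfulness of $\varphi$ we get $\Phi(x^*x)=x^*x=\Phi(x)^*\Phi(x)$. Since $x^*\in M^\Phi$ as well, the same argument gives $\Phi(xx^*)=\Phi(x)\Phi(x)^*$.

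Next we invoke Choi's multiplicative domain theorem (see \cite{paulsenCompletelyBoundedMaps2003}). If $\Phi(x^*x)=\Phi(x)^*\Phi(x)$, then $\Phi(yx)=\Phi(y)\Phi(x)$ for all $y\in M$. If $\Phi(xx^*)=\Phi(x)\Phi(x)^*$, then $\Phi(xy)=\Phi(x)\Phi(y)$ for all $y\in M$. If one wants a self-contained argument, the standard route is to apply the Cauchy--Schwarz inequality to the positive sesquilinear form $(a,b)\mapsto \Phi(b^*a)-\Phi(b)^*\Phi(a)$, which is positive by $2$-positivity of $\Phi$ applied to $\begin{pmatrix}a^*a&a^*b\\ b^*a&b^*b\end{pmatrix}$.

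Now take $x,y\in M^\Phi$. Since $x$ lies in the multiplicative domain, $\Phi(xy)=\Phi(x)\Phi(y)=xy$, so $xy\in M^\Phi$. Hence $M^\Phi$ is a unital $\sigma$-weakly closed $*$-subalgebra, that is, a von Neumann subalgebra of $M$.

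The only non-formal step is the passage from equality in Kadison--Schwarz to multiplicativity, i.e. the multiplicative domain theorem. It is standard, so we expect no real obstacle there. The place where the hypotheses genuinely enter is faithfulness of $\varphi$, which converts the vanishing expectation into equality in Kadison--Schwarz.
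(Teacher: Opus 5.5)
Your proof is correct and follows the paper's argument: Kadison--Schwarz together with $\varphi\circ\Phi=\varphi$ and faithfulness of $\varphi$ gives $\Phi(x^*x)=x^*x$ and $\Phi(xx^*)=xx^*$ for $x\in M^\Phi$. Hence $M^\Phi$ lies in the multiplicative domain of $\Phi$, so it is a unital $*$-subalgebra, and it is $\sigma$-weakly closed because $\Phi$ is normal (Remark~\ref{RmkNormal}). You simply make explicit the steps the paper leaves implicit, namely Choi's multiplicative-domain theorem, self-adjointness of $\Phi$, and the appeal to the bicommutant theorem.
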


\begin{proof}
Note that, for all $x\in M^\Phi$, one has $x^*x=\Phi(x)^*\Phi(x)\leq\Phi(x^*x)$. Hence, $\Phi(x^*x)-x^*x\geq 0$. Applying the faithful state $\varphi$, we deduce that $\Phi(x^*x)=x^*x$. One shows similarly that $\Phi(xx^*)=xx^*$. Hence, $M^\Phi$ is a subset of the multiplicative domain of $\Phi$, which easily implies that $M^\Phi$ is a unital $*$-subalgebra. It is a von Neumann subalgebra of $M$ since $\Phi$ is normal, by Remark \ref{RmkNormal}.\end{proof}

In the sequel, a ucp map $\Phi\,:\,M\rightarrow M$ will be called \textbf{idempotent} whenever $\Phi^2:=\Phi\circ\Phi=\Phi$.

\begin{lem}\label{LemmaIMRN1}
Let $M$ be a von Neumann algebra equipped with a fixed faithful normal state $\varphi$. If $\Phi:M\rightarrow M$ be an idempodent $\varphi$-preserving ucp map then,
$$\Phi=\mathbb{E}_{M^\Phi}^\varphi.$$
\end{lem}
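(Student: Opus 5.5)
The plan is to show that the image $\Phi(M)$ coincides with the fixed point algebra $M^\Phi$, that $\Phi$ is then a conditional expectation onto it, and finally to invoke the uniqueness part of Takesaki's theorem recalled above.

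First, idempotence gives $\Phi(M)\subseteq M^\Phi$: for $x\in M$ we have $\Phi(\Phi(x))=\Phi(x)$. The reverse inclusion $M^\Phi\subseteq \Phi(M)$ is trivial. Hence $\Phi(M)=M^\Phi=:P$. By the preceding Lemma (\cite[Lemma 6.4]{bannonNoncommutativeJoinings2018}), $P$ is a von Neumann subalgebra of $M$. So $\Phi$ is a ucp map $M\to P$ with $\Phi(x)=x$ for all $x\in P$; that is, a norm-one projection onto $P$. By Tomiyama's theorem, $\Phi$ is then a conditional expectation onto $P$. Alternatively, one can argue directly from the multiplicative domain: the proof of the previous lemma shows that $P$ lies in the multiplicative domain of $\Phi$, so by Choi's theorem $\Phi(axb)=a\Phi(x)b$ for $a,b\in P$.

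It remains to identify $\Phi$ with $\E^\varphi_P$. Since $\Phi$ is a $\varphi$-preserving conditional expectation onto $P$, Takesaki's theorem shows that $P$ is globally invariant under $\sigma^\varphi$. Hence $\E^\varphi_P$ exists. By the uniqueness statement recalled at the start of this section ($\E^\varphi_P$ is the unique ucp map $M\to P$ that preserves $\varphi$ and fixes $P$ pointwise), we conclude $\Phi=\E^\varphi_P$.

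The only point needing care is the passage from ``ucp idempotent onto a subalgebra'' to the existence of a $\varphi$-preserving expectation. This is covered either by Takesaki's criterion applied to $\Phi$ itself, or more simply by observing that $\Phi$ directly satisfies the defining properties used in the uniqueness characterization. So no real obstacle is expected. Normality of $\Phi$, needed for $P$ to be a von Neumann algebra, comes from Remark \ref{RmkNormal}.
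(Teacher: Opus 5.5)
Your proposal is correct and follows essentially the same route as the paper: idempotence gives $\Phi(M)=M^\Phi$, so $\Phi$ is a $\varphi$-preserving conditional expectation onto $M^\Phi$, and uniqueness of $\E^\varphi_{M^\Phi}$ concludes. Your appeals to Tomiyama's theorem (or the multiplicative-domain argument) and to Takesaki's invariance criterion only spell out details the paper leaves implicit; the Takesaki step is redundant, since $\Phi$ itself already witnesses the existence of a $\varphi$-preserving expectation onto $M^\Phi$.
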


\begin{proof}
Since $\Phi$ is idempodent, we have $\Phi(M)= M^\Phi$. Hence $\Phi$ is a $\varphi$-preserving conditional expectation onto $M^\Phi$. The lemma follows by uniqueness $\mathbb{E}_{M^\Phi}^\varphi$.
\end{proof}
 
Let us now denote by $\mathcal{E}_\varphi\subset\mathcal{S}(M)$ the set of von Neumann subalgebras of $M$ that posses $\varphi$-preserving conditional expectations. By Lemma \ref{LemmaIMRN1} there is an obvious bijective correspondence between elements in $\mathcal{E}_{\varphi}$ and $\varphi$-preserving conditional expectations from $M$ onto its appropriate subalgebras. In the following Theorem, we view $\mathcal{E}_\varphi\subset \onelip_1(M_\varphi,M_\varphi)$.

Whenever $N\subseteq M$, we view $\LL^2(N,\varphi)\subseteq\LL^2(M,\varphi)$ via the isometric map $x\Omega\mapsto x\Omega_\varphi$, where $\Omega\in\LL^2(N,\varphi)$ and $\Omega_\varphi\in\LL^2(M,\varphi)$ are the cyclic vectors. Let $e_N\in\textbf{B}(\LL^2(M,\varphi))$ be the orthogonal projection onto $\LL^2(N,\varphi)$ and note that, if $N\in\mathcal{E}_\varphi$, then $e_N$ is given by $e_N(x\Omega_\varphi)=\mathbb{E}_N^\varphi(x)\Omega_\varphi$ and one has $\mathbb{E}_N^\varphi(x)=e_Nxe_N$, for all $x\in M$.

\begin{thm}\label{Subalgpolish}
Let $M$ be a von Neumann algebra equipped with a faithful normal state $\varphi$. The following holds.
\begin{enumerate}
\item $\mathcal{E}_\varphi$ is a closed subspace of $\onelip_1(M_\varphi,M_\varphi)$. In particular,
$\mathcal{E}_\varphi$ is a Polish space with topology induced from the topology of $\onelip_1(M_\varphi,M_\varphi)$.
\item This topology on $\mathcal{E}_\varphi$ is the smallest topology for which the maps
$$\mathcal{E}_\varphi\rightarrow [0,+\infty),\quad A\mapsto\Vert \mathbb{E}^\varphi_A(x)\Vert_{2,\varphi}$$
are continuous, for all $x\in M_1$.
\end{enumerate}
\end{thm}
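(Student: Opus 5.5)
For (1), I will identify $\mathcal{E}_\varphi$ with the set of idempotent elements of $\ucp\varphi\varphi MM$. Lemma \ref{LemmaIMRN1} shows that every idempotent $\varphi$-preserving ucp map is $\E^\varphi_{M^\Phi}$. Conversely, every $\E_N^\varphi$ is idempotent. So it suffices to show that idempotents form a closed subset of $\ucp\varphi\varphi MM$, which is closed in $\onelip_1(M_\varphi,M_\varphi)$ by Proposition \ref{ConservativeucpisPolish}.

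Let $\Phi_n\to\Phi$ pointwise in $\norm\cdot_{2,\varphi}$ on $M_1$, with each $\Phi_n$ idempotent, and fix $x\in M_1$. We estimate
$$\norm{\Phi(\Phi(x))-\Phi(x)}_2\leq \norm{\Phi(\Phi(x))-\Phi_n(\Phi(x))}_2+\norm{\Phi_n(\Phi(x))-\Phi_n(\Phi_n(x))}_2+\norm{\Phi_n(x)-\Phi(x)}_2 .$$
The first term tends to $0$ by pointwise convergence at $\Phi(x)\in M_1$. The second term is at most $\norm{\Phi(x)-\Phi_n(x)}_2$, because $\Phi_n$ is $\norm\cdot_2$-contractive. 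The third term tends to $0$ directly. Hence $\Phi^2=\Phi$, and Polishness follows since closed subsets of Polish spaces are Polish.

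For (2), let $\tau$ be the weak topology generated by the maps $A\mapsto\norm{\E_A^\varphi(x)}_2$. Each of these maps is continuous for the $\onelip$ topology, so $\tau$ is coarser. For the converse, consider a sequence or net with $\norm{\E_{A_n}(x)}_2\to\norm{\E_A(x)}_2$ for all $x\in M_1$. By homogeneity this holds for all $x\in M$. Since $\norm{\E_A^\varphi(x)}_2=\norm{e_A x\Omega_\varphi}$ and $M\Omega_\varphi$ is dense, a density argument gives $\norm{e_{A_n}\xi}\to\norm{e_A\xi}$ for every $\xi\in \LL^2(M,\varphi)$. The approximation error is controlled uniformly because the $e$'s are projections, hence contractions. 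Lemma \ref{lem: equiv cv for proj} then gives $e_{A_n}\to e_A$ strongly. Consequently
$$\E_{A_n}(x)\Omega_\varphi=e_{A_n}x\Omega_\varphi\to e_Ax\Omega_\varphi=\E_A(x)\Omega_\varphi,$$
which is exactly convergence in $\onelip_1(M_\varphi,M_\varphi)$.

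To conclude with nets rather than sequences, I will check convergence of nets, which handles both topologies without any countability issue. The main point needing care is the passage from convergence of norms to convergence of the conditional expectations. This is handled by the projections $e_N$ together with Lemma \ref{lem: equiv cv for proj}, and everything else is routine.
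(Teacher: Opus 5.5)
Your proposal is correct and follows essentially the same route as the paper: for (1) you prove that the idempotents are closed in $\ucp\varphi\varphi MM$ and conclude with Lemma \ref{LemmaIMRN1}. Your three-term splitting is the paper's identity $\Phi^2(x)-\Psi^2(x)=\Phi(\Phi(x)-\Psi(x))+(\Phi-\Psi)(\Psi(x))$ with $\Phi=\Phi_n$ and $\Psi$ the limit. For (2) you pass to the Jones projections $e_A$ and apply Lemma \ref{lem: equiv cv for proj}, exactly as the paper does, and your homogeneity-plus-uniform-contraction argument for extending norm convergence from $M_1\Omega_\varphi$ to all of $\LL^2(M,\varphi)$ spells out a step the paper only indicates via the closed span of $M_1\Omega_\varphi$.
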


\begin{proof}	
$(1)$. Note that the set $\mathcal{E}_{\varphi}$ is in particular a subspace of the space of conservative u.c.p. maps, which is Polish with respect to the topology of pointwise convergence in $\norm{\cdot}_{2,\varphi}$, as shown in Proposition \ref{ConservativeucpisPolish}. We show now that in fact the set $\mathcal{E}_{\varphi}$ is a closed subspace of space of conservative u.c.p. maps, whence it also follows that $\mathcal{E}_{\varphi}$ is closed in the space $\onelip_1(M_\varphi,M_\varphi)$ and thus that it is Polish. For this, note that if $\Phi,\Psi:M\rightarrow M$ are u.c.p. maps, then
\begin{align*}
\Phi^2(x)-\Psi^2(x)=\Phi(\Phi(x)-\Psi(x)) + (\Phi-\Psi)(\Psi(x)), \text{ }x\in M,
\end{align*}
which shows that $\Phi\mapsto \Phi^2$ is a continuous function from the space of conservative u.c.p. maps on $M$ to itself and thus, the conservative idempodents are the fixed points of this function, forming a closed set with respect to the topology of pointwise convergence in $\norm{\cdot}_{2,\varphi}$. By Lemma \ref{LemmaIMRN1} the proof is complete.

\vspace{0.2cm}

\noindent$(2)$. Let $\tau_1$ be the smallest topology on $\mathcal{E}_\varphi$ for which the maps $A\mapsto\Vert \mathbb{E}^\varphi_A(x)\Vert_{2,\varphi}$ are continuous, for all $x\in M_1$, and $\tau_2$ the topology on $\mathcal{E}_\varphi$ induced from $\onelip_1(M_\varphi,M_\varphi)$. By definition, the maps $A\mapsto\Vert \mathbb{E}^\varphi_A(x)\Vert_{2,\varphi}$ are $\tau_2$-continuous so $\tau_1\subseteq\tau_2$. Let us now show that $\tau_2\subseteq\tau_1$. It suffices to show that the maps $\mathcal{E}_\varphi\rightarrow (M_1,\Vert\cdot\Vert_{2,\varphi})$, $A\mapsto\mathbb{E}^\varphi_A(x)$ are $\tau_1$-continuous, for all $x\in M_1$.

Let $A_i\in\mathcal{E}_\varphi$ be a net $\tau_1$-converging to $A\in\mathcal{E}_\varphi$. Then, by the discussion before Theorem \ref{Subalgpolish}, $\Vert e_{A_i}(x\Omega_{\varphi})\Vert\rightarrow_i\Vert e_{A}(x\Omega_{\varphi})\Vert$ for all $x\in M_1$ hence, since $\overline{{\rm Span}}(M_1\Omega_\varphi)=\LL^2(M,\varphi)$, we have that $\Vert e_{A_i}\xi\Vert\rightarrow\Vert e_A\xi\Vert$ for all $\xi\in \LL^2(M,\varphi)$. It follows from Lemma \ref{lem: equiv cv for proj} that $e_{A_i}\rightarrow e_A$ strongly. Hence,
$$\Vert \mathbb{E}^\varphi_{A_i}(x)-\mathbb{E}^\varphi_{A}(x)\Vert_{2,\varphi}=\Vert e_{A_i}(x\Omega_\varphi)-e_A(x\Omega_\varphi)\Vert\rightarrow_i 0\text{ for all }x\in M.$$
It concludes the proof.\end{proof}

\begin{remark}\label{RmkGrassmanian}
Recall that the set $\mathcal G(\LL^2(M,\varphi))$ of closed subspaces of $\LL^2(M,\varphi)$ can be endowed with a natural Polish topology for which convergence of subspaces is equivalent to strong convergence of their orthogonal projections (by Riesz' Theorem and \cite[Poposition 4.6]{FLMMP24}). We note that the map $\mathcal E_\varphi\to \mathcal G(\LL^2(M,\varphi))$ which maps $N$ to $\LL^2(N,\varphi)$ is a homeomorphism onto its image. Indeed, since $\mathbb{E}_N^\varphi(x)\Omega_\varphi =e_N(x\Omega_\varphi)$ and because $M\Omega_\varphi\subset\LL^2(M,\varphi)$ is dense, the pointwise convergence in $2$-norm of $\mathbb E^\varphi_N$ is equivalent to the convergence of corresponding $e_N$ in the strong topology.
\end{remark}

Next, we show that on $\mathcal E_\varphi$, the topology of pointwise convergence in $2$-norm is in fact equivalent to topologies of pointwise convergence in several other topologies on $M$.

\begin{lem}\label{lem: pointwise in 2 norm equiv pointwise weak}
	On the space $\mathcal E_\varphi$, the following topologies coincide
	\begin{enumerate}
		\item  pointwise convergence in strong-$\ast$ topology on $M_1$.
		\item  pointwise convergence in strong topology on $M_1$.
		\item  pointwise convergence in $2$-norm on $M_1$.
		\item  pointwise convergence in weak topology on $M_1$.
		\item  pointwise convergence in $\LL^2$-weak topology on $M_1$.
        \item pointwise convergence in ultraweak topology on $M_1$.
	\end{enumerate}
\end{lem}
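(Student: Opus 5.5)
The approach is to show that the strongest of the six topologies, (1), and the weakest, (6), coincide on $\mathcal E_\varphi$. All the others then sit in between. On $M_1$ the trivial comparisons give: strong-$\ast$ is finer than strong, strong is finer than weak, and strong is finer than ultraweak. By Lemma~\ref{lem: topologies on ball are the same}, the strong topology agrees with the $2$-norm topology on $M_1$, and the weak topology agrees with the $\LL^2$-weak topology on $M_1$. Moreover, the weak and ultraweak topologies coincide on bounded sets. Since every conditional expectation maps $M_1$ into $M_1$, pointwise convergence on $M_1$ only involves elements of the unit ball. Hence
\[
(1)\Rightarrow(2)\Leftrightarrow(3)\Rightarrow(4)\Leftrightarrow(5)\Leftrightarrow(6),
\]
and it remains to prove $(5)\Rightarrow(1)$. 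It suffices to argue with nets.

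For $(5)\Rightarrow(3)$, let $A_i\to A$ in $\mathcal E_\varphi$ with $\mathbb E^\varphi_{A_i}(x)\to\mathbb E^\varphi_A(x)$ $\LL^2$-weakly for all $x\in M_1$. Using $\mathbb E^\varphi_N(x)\Omega_\varphi=e_N(x\Omega_\varphi)$, this says $\langle e_{A_i}x\Omega_\varphi,\eta\rangle\to\langle e_Ax\Omega_\varphi,\eta\rangle$ for all $\eta\in\LL^2(M,\varphi)$ and all $x\in M_1$. The vectors $M_1\Omega_\varphi$ span a dense subspace, and the projections are uniformly bounded by $1$, so an $\varepsilon/3$ argument gives $e_{A_i}\to e_A$ weakly. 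Lemma~\ref{lem: equiv cv for proj} then upgrades this to strong convergence, i.e.\ $\Vert\mathbb E^\varphi_{A_i}(x)-\mathbb E^\varphi_A(x)\Vert_{2,\varphi}\to0$ for all $x$. This is exactly the argument of Theorem~\ref{Subalgpolish}(2), with weak convergence replacing convergence of norms.

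For $(3)\Rightarrow(1)$, conditional expectations commute with the adjoint, so $\mathbb E(x)^*=\mathbb E(x^*)$. Pointwise $2$-norm convergence on $M_1$ therefore also gives convergence of $\mathbb E^\varphi_{A_i}(x)^*$ to $\mathbb E^\varphi_A(x)^*$ in $2$-norm, since $x^*\in M_1$. By Lemma~\ref{lem: topologies on ball are the same} applied on the unit ball, this yields strong convergence of both $\mathbb E^\varphi_{A_i}(x)$ and its adjoint, i.e.\ strong-$\ast$ convergence. This is the same reasoning as in Proposition~\ref{proposition: equivalent topologies on ucp}.

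The only nontrivial step is $(5)\Rightarrow(3)$: upgrading weak to strong convergence is precisely where the idempotent structure is used, through Lemma~\ref{lem: equiv cv for proj} for the projections $e_N$. One small point needs care: the identification $\mathbb E_N^\varphi(x)\Omega_\varphi=e_N x\Omega_\varphi$ must be used so that weak convergence of the operators $e_{A_i}$ is obtained on the whole space $\LL^2(M,\varphi)$. This follows by density together with the uniform bound $\Vert e_{A_i}\Vert\le 1$.
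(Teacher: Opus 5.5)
Your proof is correct and follows essentially the same route as the paper. The only substantive step is upgrading pointwise $\LL^2$-weak convergence to pointwise $2$-norm convergence, via weak convergence of the projections $e_N$ and Lemma~\ref{lem: equiv cv for proj}; the remaining equivalences come from Lemma~\ref{lem: topologies on ball are the same} and the adjoint argument of Proposition~\ref{proposition: equivalent topologies on ucp}. Your use of nets, rather than the sequences used in the paper, is if anything slightly cleaner. That is because coincidence of topologies is characterised by nets in general.
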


\begin{proof}
 It is straightforward to see that the first three topologies coincide as a special case of Proposition \ref{proposition: equivalent topologies on ucp}. By Lemma \ref{lem: topologies on ball are the same}, it follows that topologies $(4)$ and $(5)$ also coincide. So to show that topologies $(1)$ to $(5)$ coincide we only need to show that if $(N_n)$ is a sequence of elements of $\mathcal E_\varphi$ then $\mathbb E_{N_n}^\varphi\to \mathbb E_N^\varphi$ pointwise $\LL^2$-weakly if and only if $\mathbb E_{N_n}^\varphi\to \mathbb E_N^\varphi$ pointwise in $2$-norm. For this, we note that since the standard vacuum vector $\Omega_{\varphi}$ is cyclic for $M$ in $\LL^2(M,\varphi)$, the fact that $\mathbb E_{N_n}^\varphi(x)\to \mathbb E_N^\varphi(x)$ pointwise in $\LL^2$-weak topology implies that $e_{N_n}\xi\to e_N\xi$ $\LL^2$-weakly for all $\xi\in\LL^2(M,\varphi)$, equivalently that $e_{N_n}\to e_N$ weakly in $\textbf{B}(\LL^2(M,\varphi))$, where for $N\in\mathcal S(M)$, we denote by $e_N$ the orthogonal projection onto $\LL^2(N,\varphi)$.  But, since $e_{N_n}$ are orthogonal projections, we then have $e_{N_n}\to e_N$ strongly, so in particular we have $\mathbb E_{N_n}^\varphi(x)\to\mathbb E_N^\varphi(x)$ in $2$-norm for all $x\in M$, as desired. The converse is clear. Finally it is clear that pointwise $\LL^2$ convergence implies pointwise ultraweak which implies pointwise weak convergence.
\end{proof}

\begin{remark}
There is another way to prove that $\mathcal{E}_\varphi$ is Polish: it is not difficult to show that $\onelip_1(M_\varphi,M_\varphi)$ with the pointwise ultraweak topology is compact. Hence, it suffices to show that $\{\Phi\in\onelip_1(M_\varphi,M_\varphi):\Phi^2=\Phi\}$ is $G_\delta$ for the pointwise ultraweak topology. Since the map $\onelip_1(M_\varphi,M_\varphi)\times \onelip_1(M_\varphi,M_\varphi)\to \onelip_1(M_\varphi,M_\varphi)$ is separately continuous for the pointwise ultraweak, it is Baire class $1$. Hence $\Phi\to \Phi^2$ is Baire class $1$ for the pointwise ultraweak topology. This shows that $\mathcal{E}_\varphi$ is Polish for the induced pointwise ultraweak topology, which coincides with the usual topology on $\mathcal{E}_\varphi$, by Lemma \ref{lem: pointwise in 2 norm equiv pointwise weak}.
\end{remark}

We now reformulate a result of Tsukuda as rephrased by Haagerup and Winslow and provide a different proof.

\begin{thm}[{Tsukada \cite{tsukadaStrongLimitNeumann1985}, see also \cite[Remarks~2.11]{haagerupEffrosMarechalTopologySpace1998}}]\label{thm: marechal same as one lip}
	Let $M$ be a separable von Neuman algebra equipped with a faithful normal state $\varphi$. Then the space $\mathcal E_\varphi$ is a closed subset of $\mathcal S(M)$, and the induced topology coincides with the topology induced on it from $\onelip_1(M_\varphi,M_\varphi)$.
\end{thm}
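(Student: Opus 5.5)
The plan has three parts. First, $\mathcal E_\varphi$ is closed in $\mathcal S(M)$, via modular invariance. Second, convergence in $\onelip_1(M_\varphi,M_\varphi)$ implies Maréchal convergence. Third, conversely, Maréchal convergence inside $\mathcal E_\varphi$ implies pointwise $2$-norm convergence of the conditional expectations, which is where the real difficulty lies. All spaces involved are Polish (Theorem~\ref{Subalgpolish} and Maréchal's theorem), so sequences suffice throughout.

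\emph{Closedness.} For $t\in\R$ the map $N\mapsto\sigma_t^\varphi(N)$ is a homeomorphism of $\mathcal S(M)$, since $\Vert\omega\vert_{\sigma^\varphi_t(N)}\Vert=\Vert(\omega\circ\sigma_t^\varphi)\vert_N\Vert$ and $\omega\circ\sigma_t^\varphi\in M_*$. Let $N_n\in\mathcal E_\varphi$ with $N_n\to N$. By Takesaki's theorem $\sigma_t^\varphi(N_n)=N_n$, so $N_n\to\sigma_t^\varphi(N)$ as well. Since $\mathcal S(M)$ is Hausdorff, $\sigma^\varphi_t(N)=N$ for all $t$, and Takesaki's theorem gives $N\in\mathcal E_\varphi$.

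\emph{Pointwise convergence implies Maréchal convergence.} Assume $\E^\varphi_{N_n}\to\E^\varphi_N$ pointwise in $2$-norm, hence strongly (Proposition~\ref{proposition: equivalent topologies on ucp}), and fix $\omega\in M_*$.
\begin{itemize}
\item[(a)] \emph{Lower bound.} Pick $x\in N_1$ with $\Vert\omega\vert_N\Vert=\vert\omega(x)\vert$. Then $\E^\varphi_{N_n}(x)\in(N_n)_1$ converges strongly to $x$, so $\liminf_n\Vert\omega\vert_{N_n}\Vert\geq\Vert\omega\vert_N\Vert$.
\item[(b)] \emph{Upper bound.} Pick $x_n\in(N_n)_1$ with $\vert\omega(x_n)\vert=\Vert\omega\vert_{N_n}\Vert$, and pass to a subsequence realizing the $\limsup$ and converging ultraweakly to some $x\in M_1$. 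For $y\in M$,
$$\langle x_n\Omega_\varphi,y\Omega_\varphi\rangle=\langle x_n\Omega_\varphi,e_{N_n}y\Omega_\varphi\rangle\to\langle x\Omega_\varphi,e_Ny\Omega_\varphi\rangle,$$
because $e_{N_n}\to e_N$ strongly and $(x_n\Omega_\varphi)$ is bounded and weakly convergent. Hence $x\Omega_\varphi\in\LL^2(N,\varphi)$, so $\E_N^\varphi(x)\Omega_\varphi=x\Omega_\varphi$. Since $\Omega_\varphi$ is separating, $x=\E^\varphi_N(x)\in N$, and therefore $\limsup_n\Vert\omega\vert_{N_n}\Vert\leq\Vert\omega\vert_N\Vert$.
\end{itemize}

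\emph{Maréchal convergence implies pointwise convergence.} Let $N_n\to N$ in $\mathcal S(M)$ with all $N_n,N\in\mathcal E_\varphi$. By a subsequence argument it suffices to show that every weak cluster point $T$ of $(e_{N_n})$ equals $e_N$; Lemma~\ref{lem: equiv cv for proj} then upgrades weak to strong convergence, and hence to pointwise $2$-norm convergence of the expectations.
\begin{itemize}
\item[(a)] \emph{Weak limits of $(N_n)_1$ lie in $N$.} Let $y_n\in(N_n)_1$ converge ultraweakly to $y$. If $y\notin N$, Hahn--Banach gives $\omega\in M_*$ with $\omega\vert_N=0$ and $\omega(y)\neq0$. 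But $\vert\omega(y_n)\vert\leq\Vert\omega\vert_{N_n}\Vert\to0$, a contradiction.
\item[(b)] \emph{$T\leq e_N$.} Applying (a) to $y_n=\E^\varphi_{N_n}(x)$ shows that $TxT$-type limits, i.e.\ weak limits of $e_{N_n}x\Omega_\varphi$, lie in $\LL^2(N,\varphi)$. By density of $M\Omega_\varphi$ this gives $e_NT=T$, hence $T=e_NTe_N\leq e_N$.
\item[(c)] \emph{$T\geq e_N$: the main obstacle.} We must show $\liminf_n\Vert e_{N_n}\xi\Vert\geq\Vert\xi\Vert$ for $\xi=x\Omega_\varphi$ with $x\in N$. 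I would use the functional $\omega(y)=\varphi(x^*y)=\langle y\Omega_\varphi,x\Omega_\varphi\rangle$. For $y\in(N_n)_1$,
$$\vert\omega(y)\vert=\vert\langle y\Omega_\varphi,e_{N_n}x\Omega_\varphi\rangle\vert\leq\Vert e_{N_n}x\Omega_\varphi\Vert,$$
so $\Vert\omega\vert_{N_n}\Vert\leq\Vert\E^\varphi_{N_n}(x)\Vert_{2,\varphi}$. Letting $n\to\infty$ and testing against $x/\Vert x\Vert$ yields the lower bound $\liminf_n\Vert e_{N_n}\xi\Vert\geq\Vert x\Vert_{2,\varphi}^2/\Vert x\Vert$.
\end{itemize}
The bound in (c) is lossy. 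To improve it, I would apply the same estimate to $\sigma^\varphi$-analytic elements together with the positivity $0\leq T\leq e_N$: the quantity $\langle T\xi,\xi\rangle$ is controlled from below on a cone in $\LL^2(N,\varphi)$ that should be rich enough to force $e_N-T=0$. Alternatively, one could invoke the $\liminf$ description of Maréchal limits from \cite{FLMMP24}, which provides strong approximants $x_n\in(N_n)_1$ of each $x\in N_1$. Closing the gap in (c) is where I expect most of the effort to go.
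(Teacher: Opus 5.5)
\paragraph{Verdict.} The proposal has a genuine gap at step (c) of the converse direction. The gap can be closed with your own estimate, applied to unitaries.

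\paragraph{What is correct.} Your closedness argument is the paper's. Your $\liminf$/$\limsup$ proof that pointwise convergence of the expectations implies Mar\'echal convergence is correct. It differs from the paper, which bounds $\Vert\omega\circ\E^\varphi_N-\omega\circ\E^\varphi_{N_n}\Vert$ using $\langle\E^\varphi_N(x)\xi,\xi\rangle=\langle xe_N\xi,e_N\xi\rangle$. That identity holds for $\xi\in\LL^2(N,\varphi)$ but not for general $\xi$ (take $N=\C 1$). Your route is therefore the more robust one there. Steps (a) and (b) of the converse are also fine.

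\paragraph{The gap.} In step (c) you never establish $T\geq e_N$. The bound $\Vert x\Vert_{2,\varphi}^2/\Vert x\Vert$ is too weak for general $x\in N$. The ``cone of analytic elements'' is not an argument. The strong approximants from \cite{FLMMP24} are only named, not used. Yet (c) is exactly the nontrivial content of this direction: every element of $N$ must be $2$-norm approximable from $\LL^2(N_n,\varphi)$. This is also where the paper needs a real input. It shows that a weak cluster point $y\in N$ of $\E^\varphi_{N_n}(x)$ is the $\Vert\cdot\Vert_{2,\varphi}$-closest point of $N$ to $x$. To do so it uses the Mar\'echal-to-strong$^*$ continuous selectors $x_l\colon\mathcal S(M)\to M_1$ of \cite[Theorem~3.9]{haagerupEffrosMarechalTopologySpace1998}, which is essentially your second suggestion.

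\paragraph{How to close it.} Apply your estimate to unitaries, for which it is sharp.
\begin{enumerate}
\item For a unitary $u\in N$, put $\omega=\varphi(u^*\,\cdot\,)$. Then $\Vert\omega\Vert\leq1$ and $\omega(u)=1$, so $\Vert\omega\vert_N\Vert=1$.
\item For $y\in(N_n)_1$ you showed $\vert\omega(y)\vert\leq\Vert e_{N_n}u\Omega_\varphi\Vert\leq1$.
\item Mar\'echal convergence then forces $\Vert e_{N_n}u\Omega_\varphi\Vert\to1$, that is, $\Vert u\Omega_\varphi-e_{N_n}u\Omega_\varphi\Vert^2=1-\Vert e_{N_n}u\Omega_\varphi\Vert^2\to0$.
\item Unitaries span $N$, $N\Omega_\varphi$ is dense in $\LL^2(N,\varphi)$, and $\Vert e_{N_n}\Vert\leq1$. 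Hence $e_{N_n}\xi\to\xi$ for all $\xi\in\LL^2(N,\varphi)$.
\item So every weak cluster point $T$ satisfies $Te_N=e_N$. Together with your (b), which gives $T=e_NTe_N$, this yields $T=e_N$.
\end{enumerate}
Lemma~\ref{lem: equiv cv for proj} then finishes the argument as you planned. This gives a proof of the theorem that uses only the definition of the Mar\'echal topology, avoiding the selector theorem on which the paper's argument relies.
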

\begin{proof} It is clear that, for any $\alpha\in{\rm Aut}(M)$, the map $\mathcal{S}(M)\rightarrow\mathcal{S}(M)$, $N\mapsto\alpha(N)$ is continuous. Since $\mathcal{E}_\varphi=\underset{t\in\R}{\bigcap}\{N\in\mathcal{S}(M)\,:\,\sigma_t^\varphi(N)=N\}$, we deduce that $\mathcal{E}_\varphi$ is closed.
	
Let us now show that the two topologies that we have coincide.

Assume that $N_n\to N$ in the Maréchal topology and let us show that for any $x\in M_1$, $\norm{\mathbb E^\varphi_{N_n}(x)-\mathbb E^\varphi_N(x)}_{2,\varphi}\to 0$.

By Lemma \ref{lem: pointwise in 2 norm equiv pointwise weak}, we only need to show that $\mathbb E^\varphi_{N_n}(x)\to \mathbb E^\varphi_N(x)$ weakly for all $x\in M_1$. By compactness of $M_1$ in the weak topology, this amounts to showing that $\mathbb E^\varphi_N(x)$ is the only accumulation point of the sequence $(\mathbb E^\varphi_{N_n}(x))_{n\in \mathbb{N}}$. For this, recall first that $\mathbb E^\varphi_N(x)$ is the unique element of $N$ which is the closest to $x$ in $2$-norm (it follows from the discussion preceding Theorem \ref{Subalgpolish}). Now we take an increasing sequence of integers $\phi(n)$ and $y\in M_1$ such that $\mathbb E^\varphi_{N_{\phi(n)}}(x)\to y$ weakly. If $y\notin N$ then there exists $\omega\in M_*$ such that $\omega\vert_N=0$ and $\omega(y)\neq 0$. However, $$\Vert\omega\vert_{N_{\phi(n)}}\Vert\geq\vert\omega(\E_{N_{\phi(n)}}^\varphi(x))\vert\rightarrow\vert\omega(y)\vert>0$$
and, since $N_{N_{\phi(n)}}\rightarrow N$, we have $\Vert\omega\vert_{N_{\phi(n)}}\Vert\rightarrow\Vert\omega\vert_N\Vert=0$. It implies that $y\in N$.

Hence, it remains to show that $\Vert x-y\Vert_{2,\varphi}\leq\Vert x-z\Vert_{2,\varphi}$ for all $z\in N$. Since $x-\mathbb{E}_{N_{\phi(n)}}^\varphi(x)\rightarrow x-y$ weakly, we have
$$\Vert x-y\Vert_{2,\varphi}\leq\limsup\Vert x-\E^\varphi_{N_{\phi(n)}}(x)\Vert_{2,\varphi}.$$

By \cite[Theorem 3.9]{haagerupEffrosMarechalTopologySpace1998} (see also \cite[Theorem 6.5]{FLMMP24}) there exists, for all $n\in\N$, a Maréchal to strong* continuous map $x_n\,:\,\mathcal{S}(M)\rightarrow M_1$ such that, for all $N\in\mathcal{S}(M)$, $\{x_n(N)\,:\,n\in\N\}$ is strong* dense in $N_1$. Now, fix $z\in N_1$ and $\epsilon>0$. Then, there exists $l\in\N$ such that $\Vert z-x_l(N)\Vert_{2,\varphi}<\epsilon$. Write, for all $n\in\N$,
\begin{eqnarray*}
\Vert x-\E^\varphi_{N_{\phi(n)}}(x)\Vert_{2,\varphi}
&\leq&\Vert x-x_l(N_{\phi(n)})\Vert_{2,\varphi}\\
&\leq&\Vert x-z\Vert_{2,\varphi}+\Vert z-x_l(N)\Vert_{2,\varphi}+\Vert x_l(N)-x_l(N_{\phi(n)})\Vert_{2,\varphi}\\
&<&\Vert x-z\Vert_{2,\varphi}+\epsilon+\Vert x_l(N)-x_l(N_{\phi(n)})\Vert_{2,\varphi}
\end{eqnarray*}
By continuity of $x_l$, we deduce that:
$$\Vert x-y\Vert_{2,\varphi}\leq\limsup\Vert x-\E^\varphi_{N_{\phi(n)}}(x)\Vert_{2,\varphi}\leq \Vert x-z\Vert_{2,\varphi}+\epsilon.$$
Since this holds for all $\epsilon>0$ and all $z\in N$, we deduce that $y=\mathbb{E}_N^\varphi(x)$.

Assume now that $N_n\rightarrow N$ in the topology induced from $\onelip_1(M_\varphi,M_\varphi)$. Let us show that $N_n\rightarrow N$ in the Maréchal topology. Let $\omega\in M_*$ and let us show that $\Vert\omega\vert_{N_n}\Vert\rightarrow\Vert\omega\vert_N\Vert$. We may and will assume that $\omega$ is positive. Then, viewing $M\subset\textbf{B}(H)$, where $H={\rm L}^2(M,\psi)$, there is a vector $\xi\in H$ such that $\omega(x)=\langle x\xi,\xi\rangle$ for all $x\in M$. Note that, for any $A\in\mathcal{E}_\varphi$, one has $\Vert\omega\vert_A\Vert_{A_*}=\Vert\omega\circ\mathbb{E}_A^\varphi\Vert_{M_*}$. Hence,
\begin{eqnarray*}
    \big\vert\Vert\omega\vert_N\Vert-\Vert\omega\vert_{N_n}\Vert\big\vert&=&\big\vert\Vert\omega\circ \mathbb{E}_N^\varphi\Vert-\Vert\omega\circ \mathbb{E}_{N_n}^\varphi\Vert\big\vert\leq\Vert\omega\circ\mathbb{E}_N^\varphi-\omega\circ\mathbb{E}_{N_n}^\varphi\Vert\\
    &=&\underset{x\in M_1}{\sup}\big\vert\langle \mathbb{E}_N^\varphi(x)\xi,\xi\rangle-\langle \mathbb{E}_{N_n}^\varphi(x)\xi,\xi\rangle\big\vert\\
    &=&\underset{x\in M_1}{\sup}\big\vert\langle xe_N\xi,e_N\xi\rangle-\langle xe_{N_n}\xi,e_{N_n}\xi\rangle\big\vert\\
    &\leq&\underset{x\in M_1}{\sup}\vert\langle x(e_N-e_{N_n})\xi,e_N\xi\rangle\vert+\underset{x\in M_1}{\sup}\vert\langle xe_{n_n}\xi,(e_N-e_{N_n})\xi\rangle\vert\\
    &\leq&2\Vert\xi\Vert\,\Vert e_N\xi-e_{N_n}\xi\Vert\rightarrow_n 0,
\end{eqnarray*}
where the convergences to $0$ follows from Remark \ref{RmkGrassmanian}.\end{proof}

\section{Topological properties of sets of subalgebras}\label{sec:subalgebras}

\subsection{Amenable subalgebras}

As is well known, following seminal work of Connes \cite{Co76}, the notion of amenability for von-Neumann algebras has several equivalent definitions. For our purpose, the following definition of semi-discreteness of von-Neumann algebras, which is equivalent to amenability, will prove useful.

\begin{df}A von Neumann algebra $M$ is \textbf{amenable} (or semidiscrete) if given $\eta\in M_\ast, \varepsilon >0$ and finitely many $x_1, x_2,...,x_k\in M_1$, there exist normal ucp maps $\Phi:M\rightarrow M_n(\mathbb{C})$ and $\Psi: M_n(\mathbb{C})\rightarrow M$ such that
$$|\eta(\Psi\circ\Phi(x_j)-x_j)|<\varepsilon\text{ for all }j\in \{1, 2,...,k\}.$$
\end{df}

In the following, we study the set of amenable subalgebras having a $\varphi$-preserving conditional expectation and show that it is closed in $\mathcal{E}_\varphi$. This generalises a result of \cite{TYH25} from the tracial case with a different and simpler proof. 
It implies that it is not possible to approximate a non-amenable Connes-embeddable ${\rm II}_1$ factor $N$ (eg $N=\LL(\mathbb{F}_n)$, $n\geq 2$) ``from inside'', i.e. using amenable subalgebras of $N$.
This is in sharp contrast to the result Haagerup-Winslow \cite[Theorem 5.8]{haagerupEffrosMarechalTopology2000}: they show that every Connes-embeddable ${\rm II}_1$-factor $N\subset\textbf{B}(H)$ is is approximable in the Maréchal topology on $\textbf{B}(H)$ by amenable von-Neumann algebras.

\begin{prop}\label{prop: amenable is closed v1}
 Let $M$ be a von-Neumann algebra with a nfs $\varphi$. The set of amenable subalgebras in $\mathcal{E}_\varphi$ is closed in $\mathcal{E}_\varphi$.  
\end{prop}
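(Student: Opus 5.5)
Take a sequence $(N_k)$ of amenable subalgebras in $\mathcal E_\varphi$ converging to $N\in\mathcal E_\varphi$; since $\mathcal E_\varphi$ is Polish (Theorem~\ref{Subalgpolish}), sequences suffice. The goal is to show that $N$ is semidiscrete in the sense of the Definition. By Theorem~\ref{Subalgpolish} and Lemma~\ref{lem: pointwise in 2 norm equiv pointwise weak}, the hypothesis gives $\mathbb E^\varphi_{N_k}(x)\to\mathbb E^\varphi_N(x)$ strongly for every $x\in M$. The strategy is to transport approximate factorizations through matrix algebras from $N_k$ to $N$, composing with conditional expectations on both sides.

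Fix $\eta\in N_*$, $\varepsilon>0$ and $x_1,\dots,x_m\in N_1$. Extend $\eta$ to $\tilde\eta:=\eta\circ\mathbb E^\varphi_N\in M_*$. Set $y_j^{(k)}:=\mathbb E^\varphi_{N_k}(x_j)\in (N_k)_1$. Since $x_j=\mathbb E^\varphi_N(x_j)$, we have $y_j^{(k)}\to x_j$ strongly, hence ultraweakly, as $k\to\infty$. Also $\mathbb E^\varphi_N\to$ identity on $N$ in the sense that $\mathbb E^\varphi_N(y_j^{(k)})\to x_j$ ultraweakly, since $\mathbb E^\varphi_N$ is normal. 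Choose $k$ so large that $|\tilde\eta(\mathbb E^\varphi_N(y^{(k)}_j)-x_j)|<\varepsilon/2$ for all $j$.

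Next apply amenability of $N_k$ to the normal functional $\eta_k:=\tilde\eta|_{N_k}$, the tolerance $\varepsilon/2$ and the elements $y^{(k)}_j\in (N_k)_1$. This yields normal ucp maps $\Phi_k:N_k\to M_n(\C)$ and $\Psi_k:M_n(\C)\to N_k$ with $|\tilde\eta(\Psi_k\Phi_k(y_j^{(k)})-y_j^{(k)})|<\varepsilon/2$. Now define the candidate maps for $N$:
\[
\Phi:=\Phi_k\circ\mathbb E^\varphi_{N_k}|_N:N\to M_n(\C),\qquad \Psi:=\mathbb E^\varphi_N\circ\Psi_k:M_n(\C)\to N .
\]
Both are normal ucp, by Remark~\ref{RmkNormal} for the conditional expectations. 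Since $\mathbb E^\varphi_{N_k}(x_j)=y_j^{(k)}$, we get
\[
\eta(\Psi\Phi(x_j)-x_j)=\tilde\eta\bigl(\Psi_k\Phi_k(y^{(k)}_j)-y_j^{(k)}\bigr)+\tilde\eta\bigl(\mathbb E^\varphi_N(y^{(k)}_j)-x_j\bigr).
\]
The first term uses $\tilde\eta\circ\mathbb E^\varphi_N=\tilde\eta$, which lets us rewrite $\tilde\eta(\mathbb E^\varphi_N(\cdot))$ as $\tilde\eta(\cdot)$. The total is therefore less than $\varepsilon$, so $N$ is semidiscrete.

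The main subtlety, which is mild, is the ordering of choices. First fix $k$ using only the convergence $\mathbb E^\varphi_N(\mathbb E^\varphi_{N_k}(x_j))\to x_j$; only afterwards invoke amenability of $N_k$, with the functional and elements depending on $k$. One should also check that the Definition allows an arbitrary normal functional. It does, so restricting $\tilde\eta$ to $N_k$ is legitimate. The key point is that the existence of $\varphi$-preserving, hence normal, expectations onto both $N_k$ and $N$ is exactly what makes the transfer possible.
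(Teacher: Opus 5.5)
Your proposal is correct and takes essentially the same approach as the paper. You extend $\eta$ to $\eta\circ\mathbb E^\varphi_N$, choose the index so that $\mathbb E^\varphi_{N_k}(x_j)$ is close to $x_j$ against this functional, apply semidiscreteness of $N_k$, and then transport the factorization to $N$ via $\Phi_k\circ\mathbb E^\varphi_{N_k}|_N$ and $\mathbb E^\varphi_N\circ\Psi_k$.
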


\begin{proof}
Let  $(N_n)_n$ be a sequence in $\mathcal{E}_\varphi$ converging to $N\in\mathcal{E}_\varphi$ and suppose that each $N_n$ is amenable. Let $x_1, x_2,...,x_k\in N, \eta\in N_\ast, \varepsilon>0$. Since $N_n\rightarrow N$, we have that $\mathbb{E}^\varphi_{N_n}(x_j)\rightarrow\mathbb{E}_N^\varphi(x_j)=x_j$, weakly for all $j$. Hence, we can find $i$ such that $|\eta\circ \mathbb E_N^\varphi(\mathbb{E}^\varphi_{N_i}(x_j)-x_j)|<\varepsilon/2$ for all $j$.  Now, since $N_i$ is amenable, hence for $\mathbb{E}^\varphi_{N_i}(x_1),...,\mathbb{E}^\varphi_{N_i}(x_k)\in N_i, \eta\circ \mathbb{E}^\varphi_N\vert_{N_i}, \varepsilon/2$, we can find normal ucp maps $\Phi:N_i\rightarrow M_n(\mathbb{C})$ and $\Psi:M_n(\mathbb{C})\rightarrow N_i$ such that 
 $$|\eta\circ \mathbb{E}_N^\varphi(\Psi\circ\Phi(\mathbb{E}^\varphi_{N_i}(x_j))-\mathbb{E}^\varphi_{N_i}(x_j))|<\varepsilon/2$$
 for all $j$. Let us now consider the normal ucp maps $\Phi_0=\Phi\circ \mathbb{E}^\varphi_{N_i}:N\rightarrow M_n(\mathbb{C})$ and $\Psi_0=\mathbb{E}^\varphi_{N}\circ\Psi : M_n(\mathbb{C})\rightarrow N$. It is then easy to see that
 $$|\eta(\Psi_0\circ\Phi_0(x_j)-x_j)|<\varepsilon\text{ for all }j.$$ 
 It completes the proof.\end{proof}

This result can in fact be further generalised. Assume that $N\subseteq M$ is a von Neumann subalgebra of $M$, equipped with a faithful normal state $\phi$ and with $\mathbb{E}_N^{\phi}$ denoting the $\phi$ preserving conditional expectation onto $N$. Recall the following definition \cite{Is19, OzawaPopa}.

\begin{df}
    We say that $M$ is \textbf{amenable relative to $N$} if there exists a conditional expectation $\theta:\langle M,N\rangle\to M$ such that $\theta|_M=id_M$.

\end{df}

\begin{remark}
Let $P\subseteq N\subseteq M$, with fixed faithful $\phi$-invariant conditional expectations from $M$ onto $N$ and $P$ denoted $\mathbb{E}_N^\phi$ and $\mathbb{E}_P^\phi$ respectively, the notion of relative amenability of $N$ relative to $P$ in $M$ is given by the existence a conditional expectation $\Theta:\langle M,P\rangle\to N$ such that $\Theta|_M=\mathbb{E}_N^\phi$ \cite{Is19}, denoted as $N\lessdot_M P$. However, in this case, it can be shown that $N$ is amenable relative to $P$ if and only if $N$ is amenable relative to $P$ in $M$. To see this, assume first that $N$ is amenable relative to $P$. Thus, there is a conditional expectation $\theta:\langle N,P\rangle\to N$ such that $\theta|_N=id_N$. Let $e_N:\LL^2(M,\phi)\to \LL^2(N,\phi)$ be the Jones projection associated with $\E^\phi_N$. Define the UCP map  $\Gamma_N:\langle M,P\rangle\to \langle N,P\rangle$
by $\Gamma_N(T)=e_NTe_N\big|_{L^2(N,\phi)}$. It is easy to see that on the algebraic span of $M e_P M$, we have $\Gamma_N(xe_Py)=\mathbb{E}^\phi_N(x)e_P \mathbb{E}^\phi_N(y),
x,y\in M$. Indeed, let $z\in N$. Then $e_Nxe_Pyz\widehat 1
= e_Nx\mathbb{E}^\phi_P(yz)\widehat{1}
= \mathbb{E}^\phi_N(x\mathbb{E}^\phi_P(yz))\widehat{1}$. Since $\mathbb{E}^\phi_P(yz)\in P\subseteq N$, this equals $\mathbb{E}^\phi_N(x)\mathbb{E}^\phi_P(yz)\widehat{1}$. On the other hand, $\mathbb{E}^\phi_N(x)e_P\mathbb{E}^\phi_N(y)z\widehat 1= \mathbb{E}^\phi_N(x)\mathbb{E}^\phi_P(\mathbb{E}^\phi_N(y)z)\widehat{1}=\mathbb{E}^\phi_N(x)\mathbb{E}^\phi_P(yz)\widehat{1}$, since $z\in N$
and since $\mathbb{E}^\phi_P\circ \mathbb{E}^\phi_N=\mathbb{E}^\phi_P$. Thus, we have
$$
e_Nxe_Py e_N
=
\mathbb{E}^\phi_N(x)e_P\mathbb{E}^\phi_N(y)
$$
on $\LL^2(N,\phi)$.

In particular, for $x\in M$, $\Gamma_N(x)=\mathbb{E}^\phi_N(x)$. Now, we define
$$
\Phi:=\theta\circ \Gamma_N:\langle M,P\rangle\to N.
$$
Then $\Phi$ is UCP, its range is contained in $N$, and for $x\in M$, $\Phi(x)=\theta(\mathbb{E}^\phi_N(x))=\mathbb{E}^\phi_N(x)$. Also, if $n\in N$, then $\Phi(n)=\mathbb{E}^\phi_N(n)=n$. Hence $\Phi$ is a norm-one projection from $\langle M,P\rangle$ onto $N$,
so it is a conditional expectation, and $\Phi|_M=\mathbb{E}^\phi_N$. Thus $(N,E_N)\lessdot_M P$.

Conversely, assume $(N,E_N)\lessdot_M P$. Thus there is a conditional expectation $\Phi:\langle M,P\rangle\to N$ such that $\Phi|_M=\mathbb{E}^\phi_N$. Since $P\subseteq N\subseteq M$, we have a natural inclusion $\langle N,P\rangle\subseteq \langle M,P\rangle$. Restricting $\Phi$ to $\langle N,P\rangle$, we note that for every $n\in N,\Phi(n)=\mathbb{E}^\phi_N(n)=n$. Therefore
$$
\Phi|_{\langle N,P\rangle}:\langle N,P\rangle\to N
$$
is a conditional expectation fixing $N$ pointwise, thus $N$ is amenable relative to $P$.
\end{remark}

\begin{thm}\label{thm:rel-amenable-closed-core-proof}
Let $M$ be a von Neumann algebra equipped with a faithful normal state $\phi$.
Let $P\in \mathcal E_\phi$. Then the set
$$
\mathcal A_P
:=
\{\,N\in \mathcal E_\phi
\mid
P\subseteq N\subseteq M
\text{ and } N \text{ is amenable relative to } P\,\}
$$
is closed in $\mathcal E_\phi$. Since $\mathcal E_\phi$ is closed in $\mathcal{S}(M)$, it is
also closed as a subset of $\mathcal S (M)$.
\end{thm}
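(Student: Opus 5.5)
Take a sequence $N_n\in\mathcal A_P$ converging in $\mathcal E_\phi$ to some $N\in\mathcal E_\phi$; we must show that $N\in\mathcal A_P$. First, $P\subseteq N$ holds because of Lemma \ref{LemInclusion} combined with Theorem \ref{thm: marechal same as one lip}: the set of pairs $(A,B)$ with $A\subseteq B$ is closed in $\mathcal S(M)\times\mathcal S(M)$, and the two topologies agree on $\mathcal E_\phi$. So the real work is relative amenability. By the preceding Remark, it suffices to build a ucp map $\Theta:\langle M,P\rangle\to N$ with $\Theta|_M=\mathbb E^\phi_N$. Such a map is automatically a conditional expectation onto $N$ fixing $N$, hence $N\lessdot_M P$, which is equivalent to $N$ being amenable relative to $P$. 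Here $\langle M,P\rangle=(M\cup\{e_P\})''\subseteq \mathbf B(\LL^2(M,\phi))$, a fixed algebra independent of $n$.

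For each $n$, the Remark (applied with $N_n$ in place of $N$) provides a conditional expectation $\Theta_n:\langle M,P\rangle\to N_n\subseteq M$ with $\Theta_n|_M=\mathbb E^\phi_{N_n}$. View $\Theta_n$ as a ucp map $\langle M,P\rangle\to M\subseteq \mathbf B(\LL^2(M,\phi))$. The set of ucp maps $\langle M,P\rangle\to\mathbf B(\LL^2(M,\phi))$ is compact in the point-ultraweak (BW) topology. Pass to a cluster point $\Theta$, either along a subnet or via a free ultrafilter $\omega$ with $\Theta(T)=\lim_\omega\Theta_n(T)$ weakly. Then $\Theta$ is ucp, and since $M$ is ultraweakly closed, $\Theta$ takes values in $M$.

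Next we check $\Theta|_M=\mathbb E^\phi_N$. For $x\in M$, $\Theta_n(x)=\mathbb E^\phi_{N_n}(x)\to\mathbb E^\phi_N(x)$ strongly, because convergence in $\mathcal E_\phi$ is pointwise strong convergence (Lemma \ref{lem: pointwise in 2 norm equiv pointwise weak}). Hence $\Theta(x)=\mathbb E^\phi_N(x)$.

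The main obstacle is showing that the range of $\Theta$ lies in $N$ and not merely in $M$. I would handle this in the same spirit as the first half of the proof of Theorem \ref{thm: marechal same as one lip}. Take $\omega_0\in M_*$ with $\omega_0|_N=0$. Since $\Theta_n(T)\in (N_n)$ with norm at most $\|T\|$, we get $|\omega_0(\Theta_n(T))|\le\|T\|\,\|\omega_0|_{N_n}\|$. The right-hand side tends to $\|T\|\,\|\omega_0|_N\|=0$ by Maréchal convergence. Hence $\omega_0(\Theta(T))=0$ for all such $\omega_0$, and by the bipolar theorem $\Theta(T)\in N$.

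Finally, $\Theta$ fixes $N$ because $\Theta|_N=\mathbb E^\phi_N|_N=\mathrm{id}$. So $\Theta$ is a conditional expectation $\langle M,P\rangle\to N$ restricting to $\mathbb E^\phi_N$ on $M$. The Remark then gives that $N$ is amenable relative to $P$.

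Closedness in $\mathcal S(M)$ follows because $\mathcal E_\phi$ is closed there (Theorem \ref{thm: marechal same as one lip}). Since all spaces involved are Polish, sequences suffice.
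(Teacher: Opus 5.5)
Your argument is correct up to the last step, and it takes a different and more elementary route than the paper. The maps $\Theta_n$ exist by the forward half of the Remark (compression by $e_{N_n}$ followed by $\theta_n$). A point-ultraweak cluster point $\Theta$ is ucp with values in $M$, and $\Theta|_M=\mathbb E^\phi_N$. Your estimate $|\omega_0(\Theta_n(T))|\le\|T\|\,\|\omega_0|_{N_n}\|$, combined with Theorem~\ref{thm: marechal same as one lip}, does force $\Theta(T)\in N$. So you have proved $N\lessdot_M P$; the paper reaches the same point only through continuous cores, Isono's Theorems~3.2 and~A.6, and Proposition~\ref{prop: weak-containment-closed}.

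\textbf{The gap is the final sentence.} To pass from $N\lessdot_M P$ to ``$N$ is amenable relative to $P$'' (a conditional expectation $\langle N,P\rangle\to N$ fixing $N$), you invoke the converse half of the Remark, and that half is not valid. Its ``natural inclusion'' $\langle N,P\rangle\subseteq\langle M,P\rangle$ is only the non-unital corner identification $\langle N,P\rangle\cong e_N\langle M,P\rangle e_N$, under which $n\in N$ corresponds to $ne_N$. Restricting $\Theta$ to this corner therefore gives $n\mapsto n\,\Theta(e_N)$, where $\Theta(e_N)\in\mathcal Z(N)$, and nothing forces $\Theta(e_N)=1$. The paper's own proof ends with the same appeal (``equivalent to $(N,\mathbb E^\phi_N)\lessdot_M P$, which gives $N\in\mathcal A_P$''), so it does not close this gap either.

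In fact no argument can close it, because the statement fails for the intrinsic notion of relative amenability. Recall that for $\Lambda\le H$, $\LL(H)$ is amenable relative to $\LL(\Lambda)$ if and only if $\Lambda$ is co-amenable in $H$.
\begin{itemize}
\item Take $\Gamma=\mathbb F_2=\langle a,b\rangle$ and $\Lambda=\langle a,bab^{-1}\rangle$. This subgroup is finitely generated of infinite index, hence not co-amenable, so $\LL(\Gamma)$ is not amenable relative to $\LL(\Lambda)$.
\item Let $\alpha:\Gamma\to\Lambda$ be the isomorphism $a\mapsto a$, $b\mapsto bab^{-1}$, and set $G=\langle\Gamma,t_1,t_2,\dots\mid t_k^{-1}\gamma t_k=\alpha(\gamma)\rangle$ and $\Gamma_k=\langle\Gamma,t_k\rangle$.
\item Britton's lemma gives $\Gamma_k\to\Gamma$ in $\Sub(G)$: an element of $G\setminus\Gamma$ lies in $\Gamma_k$ only if $t_k$ is the only stable letter in its reduced form.
\item Since $t_k^{-1}\Gamma t_k=\Lambda\subseteq\Gamma$, the means $\frac1n\sum_{j<n}\delta_{t_k^j\Gamma}$ on $\Gamma_k/\Gamma$ are fixed by $a$ and $b$, and are $2/n$-invariant under $t_k$. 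Hence $\Gamma=t_k\Lambda t_k^{-1}$, and therefore $\Lambda$, is co-amenable in $\Gamma_k$.
\item So in $M=\LL(G)$ with its trace, $N_k=\LL(\Gamma_k)\in\mathcal A_{\LL(\Lambda)}$, while $N_k\to\LL(\Gamma)\notin\mathcal A_{\LL(\Lambda)}$ by Proposition~\ref{PropGroupsAlgebras}.
\end{itemize}
This is consistent with what you did prove: $\LL(\Gamma)\lessdot_M\LL(\Lambda)$ does hold. The vector state at $\delta_{t_1}$ on $\langle M,\LL(\Lambda)\rangle=\rho(\Lambda)'$ is $\operatorname{Ad}(\Gamma)$-invariant because $t_1\Lambda t_1^{-1}=\Gamma$, and it restricts to $\tau$ on $M$. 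It also vanishes on $e_{\LL(\Gamma)}$, so the associated expectation has exactly $\Theta(e_N)=0$.

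What your proof, and the paper's, actually establishes is that $\{N\in\mathcal E_\phi: P\subseteq N,\ N\lessdot_M P\}$ is closed. If the theorem is read with relative amenability inside $M$, your argument is complete once the final appeal to the Remark is dropped.
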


\begin{proof}

First note that since $\E_Q^\phi$ is $\phi$-preserving, Takesaki's theorem gives $\sigma_t^\phi(Q)=Q$ and $\E_Q^\phi\circ\sigma_t^\phi=\sigma_t^\phi\circ \E_Q^\phi$ for all $t\in\mathbb R$. Hence the crossed product $\widetilde Q=Q\rtimes_{\sigma^\phi|_Q}\mathbb R$
is naturally represented as a von Neumann subalgebra of $\widetilde M=M\rtimes_{\sigma^\phi}\mathbb R$. Moreover, in the standard crossed-product representation, the orthogonal
projection $1\otimes e_Q:
\LL^2(\mathbb R,\LL^2(M,\phi))
\to
\LL^2(\mathbb R,\LL^2(Q,\phi|_Q))$ implements a faithful normal conditional expectation $\widetilde \E_Q:\widetilde M\to\widetilde Q$
characterized by $\widetilde{\E}_Q(x\lambda_t)=\E_Q^\phi(x)\lambda_t$ for all $x\in M$ and $t\in\R$.

Let now $(N_k)_k$ be a sequence in $\mathcal A_P$ such that $N_k\to N$ in $\mathcal E_\phi$. We want to show that $N\in\mathcal A_P$. To that end, first note that it is immediate that that $P\subseteq N$.

Next, by \cite{haagerupEffrosMarechalTopologySpace1998} (Theorem~6.19), we have continuity for continuous cores, so the convergence $N_k\to N$ in the Effros--Mar\'echal topology implies $\widetilde N_k=c_\phi(N_k)\to c_\phi(N)=\widetilde N$ in the Effros--Mar\'echal topology. We now choose a faithful normal state $\chi$ on $\widetilde P= c_\phi(P)$ and define $\omega:=\chi\circ \widetilde \E_P$ on $\widetilde M$. Since $P\subseteq N_k$ for all $k$, and $P\subseteq N$, we have $\E^\phi_P\circ \E^\phi_{N_k} = \E^\phi_P$ and $ \E^\phi_P\circ \E^\phi_N
= \E^\phi_P$ hence, $\widetilde \E_P\circ \widetilde \E_{N_k} = \widetilde \E_P$ and $\widetilde \E_P\circ \widetilde \E_N
= \widetilde \E_P$. Hence, $\omega\circ \widetilde \E_{N_k}=\omega$ and $\omega\circ \widetilde \E_N=\omega$. Thus $\widetilde N_k,\widetilde N\in\mathcal E_\omega(\widetilde M)$. Since $\widetilde N_k\to\widetilde N$, thus we have, by Theorem \ref{thm: marechal same as one lip}, that $\widetilde \E_{N_i}(X)\to \widetilde \E_N(X)$ in $\|\cdot\|_{2,\omega}$, hence ultraweakly, for every $X\in\widetilde M$.

Now, for any $k
\in \mathbb{N}$ and since $N_k\in\mathcal A_P$, $N_k$ is amenable relative to $P$. So, we have $(N_k,E_{N_k})\lessdot_M P$. By \cite{Is19} (Theorem~3.2), we have $(\widetilde N_k,\widetilde E_{N_k})\lessdot_{\widetilde M}\widetilde P$. So, there exists a conditional
expectation $\langle \widetilde N_k,\widetilde P\rangle\to \widetilde N_k$ fixing $\widetilde N_k$ pointwise, by the previous remark. Since $\widetilde N_k$ is semifinite, so by \cite{Is19} (Appendix Theorem~A.6), relative amenability of $\widetilde N_k$ with respect to $\widetilde P$ (inside the ambient algebra $\widetilde N_k$), implies 
$$
{}_{\widetilde N_k}\LL^2(\widetilde N_k)_{\widetilde N_k}
\prec
{}_{\widetilde N_k}
\big(
\LL^2(\widetilde N_k)\otimes_{\widetilde P}\LL^2(\widetilde N_k)
\big)_{\widetilde N_k}.
$$

Note now that Connes tensor product with fixed correspondences preserves weak containment (follows from \cite[Prop 2.2.1]{PoCorrespondences}). Therefore, tensoring the preceding weak containment on the left with ${}_{\widetilde M}\LL^2(\widetilde M)_{\widetilde N_k}$ and on the right with ${}_{\widetilde N_k}\LL^2(\widetilde M)_{\widetilde M}$, we obtain ${}_{\widetilde M}
\big(
\LL^2(\widetilde M)\otimes_{\widetilde N_k}\LL^2(\widetilde M)
\big)_{\widetilde M}
\prec
{}_{\widetilde M}
\big(
\LL^2(\widetilde M)\otimes_{\widetilde P}\LL^2(\widetilde M)
\big)_{\widetilde M}$. Here, we use that $\LL^2(\widetilde M)\otimes_{\widetilde N_k}
\LL^2(\widetilde N_k)\otimes_{\widetilde N_k}
\LL^2(\widetilde M)
\cong
\LL^2(\widetilde M)\otimes_{\widetilde N_k}\LL^2(\widetilde M)$ and that $\LL^2(\widetilde M)\otimes_{\widetilde N_k}
\big(
\LL^2(\widetilde N_k)\otimes_{\widetilde P}\LL^2(\widetilde N_k)
\big)
\otimes_{\widetilde N_k}
\LL^2(\widetilde M)
\cong
\LL^2(\widetilde M)\otimes_{\widetilde P}\LL^2(\widetilde M)$. Thus, equivalently, for every $k$, we have $\mathcal H_{\widetilde \E_{N_k}}\prec \mathcal H_{\widetilde \E_P}$ as $\widetilde M$-$\widetilde M$ correspondences. By Proposition \ref{prop: weak-containment-closed}, for fixed $\widetilde \E_P$ the set
$$
\left\{
\Psi\in {\rm UCP}_{\omega,\omega}(\widetilde M,\widetilde M)
\mid
{}_{\widetilde M}\mathcal H_\Psi{}_{\widetilde M}
\prec
{}_{\widetilde M}\mathcal H_{\widetilde \E_P}{}_{\widetilde M}
\right\}
$$
is closed under pointwise ultraweak convergence. Since each $\widetilde \E_{N_k}$ belongs to this set, thus the limit $\widetilde \E_N$ also
belongs to this set. Therefore, we have ${}_{\widetilde M}\mathcal H_{\widetilde \E_N}{}_{\widetilde M}
\prec
{}_{\widetilde M}\mathcal H_{\widetilde \E_P}{}_{\widetilde M}$.

Restricting the right action to $\widetilde N$ gives
$$
{}_{\widetilde M}
\big(
\LL^2(\widetilde M)\otimes_{\widetilde N}\LL^2(\widetilde M)
\big)_{\widetilde N}
\prec
{}_{\widetilde M}
\big(
\LL^2(\widetilde M)\otimes_{\widetilde P}\LL^2(\widetilde M)
\big)_{\widetilde N}.
$$
which implies that ${}_{\widetilde M}\LL^2(\widetilde M)_{\widetilde N}
\prec
{}_{\widetilde M}
\LL^2(\widetilde M)\otimes_{\widetilde P}\LL^2(\widetilde M)_{\widetilde N}$, using ${}_{\widetilde M}\LL^2(\widetilde M)_{\widetilde N}\cong {}_{\widetilde M}
\LL^2(\widetilde M)\otimes_{\widetilde N}\LL^2(\widetilde N)_{\widetilde N}
\subseteq
{}_{\widetilde M}
\LL^2(\widetilde M)\otimes_{\widetilde N}\LL^2(\widetilde M)_{\widetilde N}$ which follows from \cite[Lemma 1.3.3]{PoCorrespondences}. Now, by \cite{Is19} (Theorem A.6 (condition $(1)$ implies condition $(3)$)), we obtain a UCP map $\Psi:
\langle \widetilde M,\widetilde P\rangle
\to
\langle \widetilde M,\widetilde N\rangle$ which fixes $\widetilde M$ pointwise. Thus, it follows now from \cite{Is19}
(Theorem~3.2) that $(\widetilde N,\widetilde \E_N)\lessdot_{\widetilde M}\widetilde P$.

Finally, again by \cite{Is19} (Theorem~3.2), the core condition
$(\widetilde N,\widetilde \E_N)\lessdot_{\widetilde M}\widetilde P$ is equivalent to $(N,\E^\phi_N)\lessdot_M P$, which gives that $N\in\mathcal A_P$. Thus, $\mathcal A_P$ is closed in $\mathcal E_\phi$, thus in $\mathcal{S}(M)$ as well.\end{proof}

 \subsection{Subalgebras with the Haagerup approximation property}
We now study the space of subalgebras with expectation and possessing the (relative) Haagerup approximation property. The Haagerup property has been named after the seminal work of Haagerup \cite{Ha78}. We follow here the formulation of the relative Haagerup property for arbitrary von Neumann algebras given in \cite{CKSVW23}.

\medskip

\noindent Let $M$ be a von Neumann algebra with a nfs $\varphi$. Let $\Phi\,:\,M\rightarrow M$ be a normal ucp map and assume that $\varphi\circ\Phi\leq\varphi$ so that the map $\widehat{\Phi}\,:\,\LL^2(M,\varphi)\rightarrow\LL^2(M,\varphi)$, $x\Omega_\varphi\mapsto\Phi(x)\Omega_\varphi$ is bounded (actually a contraction).

\begin{df}[\cite{CKSVW23}] 
Let $\varphi$ be a nfs on $M$ and $N,P\in\mathcal{E}_\varphi$ such that $P\subseteq N$. We say that $M$ has the \textbf{Haagerup property relative to $N$} if for all $\varepsilon >0$, $k\geq 1$, $x_1,x_2,...,x_k\in M_1$, there exists a normal ucp $N$-$N$-bimodular map $\Phi:M\rightarrow M$ s.t.

\begin{enumerate}
\item $\varphi\circ\Phi\leq\varphi$ and $\widehat{\Phi}\in \mathcal{K}_\varphi(M,N)$,
\item $\Vert\Phi(x_j)-x_j\Vert_{2,\varphi}<\varepsilon$ for all $j$,
\end{enumerate}
where $\mathcal{K}_\varphi(M,N):=\overline{{\rm Span}}^{\Vert\cdot\Vert}\{xe_Ny:x,y\in M\}\subset\textbf{B}(\LL^2(M,\varphi))$.
\end{df}

We now show that the set of von Neumann subalgebras of $M$ which are images of $\varphi$ preserving conditional expectations and have Haagerup property relative to some $P\in \mathcal{E}_\varphi$ is closed. This generalises a result of Jolissaint-Valette \cite{jolisaintvaletteHAP} on closedness of subalgebras of a finite von Neumann algebra having the Haagerup property.

\begin{thm}\label{thm: rel haagerup is closed}
Let $M$ be a von Neumann algebra equipped with a faithful normal state $\varphi$,
and let $P\in \mathcal E_\varphi$. Then the set
$$
\mathcal H_P
:=
\{\,N\in \mathcal E_\varphi \mid P\subseteq N
\text{ and } N \text{ has the Haagerup property relative to } P\,\}
$$
is closed in $\mathcal E_\varphi$.
\end{thm}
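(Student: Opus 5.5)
Let $(N_k)_k$ be a sequence in $\mathcal H_P$ converging to $N\in\mathcal E_\varphi$; I will show directly that $N\in\mathcal H_P$. First, $P\subseteq N$ follows from Lemma~\ref{LemInclusion} together with Theorem~\ref{thm: marechal same as one lip}, or more simply from $\mathbb E^\varphi_{N_k}(p)=p\to\mathbb E^\varphi_N(p)$ for $p\in P$. The strategy mirrors Proposition~\ref{prop: amenable is closed v1}: transport the approximating maps of $N_k$ to $N$ by compressing with conditional expectations.

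Fix $\varepsilon>0$ and $x_1,\dots,x_m\in N_1$.
\begin{enumerate}
\item By Theorem~\ref{Subalgpolish}, choose $k$ with $\Vert\mathbb E^\varphi_{N_k}(x_j)-x_j\Vert_{2,\varphi}<\varepsilon/2$ for all $j$, and set $y_j:=\mathbb E^\varphi_{N_k}(x_j)\in (N_k)_1$.
\item Take a normal ucp $P$-$P$-bimodular map $\Phi_k:N_k\to N_k$ with $\varphi\circ\Phi_k\leq\varphi$, $\widehat{\Phi}_k\in\mathcal K_\varphi(N_k,P)$, and $\Vert\Phi_k(y_j)-y_j\Vert_{2,\varphi}<\varepsilon/2$.
\item Define
$$\Psi:=\mathbb E^\varphi_N\circ\Phi_k\circ\mathbb E^\varphi_{N_k}\big|_N : N\to N.$$
\end{enumerate}

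Then $\Psi$ is normal and ucp by Remark~\ref{RmkNormal}. It is $P$-$P$-bimodular because $P\subseteq N\cap N_k$ and both expectations are $P$-bimodular. It satisfies $\varphi\circ\Psi\leq\varphi$ because both expectations preserve $\varphi$. Since $\mathbb E^\varphi_N$ is a $2$-norm contraction and $x_j\in N$,
$$\Vert\Psi(x_j)-x_j\Vert_{2,\varphi}\leq\Vert\mathbb E^\varphi_N(\Phi_k(y_j)-y_j)\Vert_{2,\varphi}+\Vert\mathbb E^\varphi_N(y_j-x_j)\Vert_{2,\varphi}<\varepsilon.$$

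The main step, and the one requiring care, is compactness: showing $\widehat\Psi\in\mathcal K_\varphi(N,P)$. On $\LL^2(N,\varphi)$ we have $\widehat\Psi=e_N\widehat\Phi_k e_{N_k}$. Since $\widehat\Phi_k$ is a norm limit of finite sums of operators $ae_Pb$ with $a,b\in N_k$ (acting on $\LL^2(N_k,\varphi)$), and compression is norm continuous, it suffices to treat $e_Nae_Pbe_{N_k}$ restricted to $\LL^2(N,\varphi)$. Two identities handle this.
\begin{itemize}
\item For $y\in N$, using $\mathbb E^\varphi_P\circ\mathbb E^\varphi_{N_k}=\mathbb E^\varphi_P=\mathbb E^\varphi_P\circ\mathbb E^\varphi_N$,
$$e_Pbe_{N_k}(y\Omega_\varphi)=\mathbb E^\varphi_P(b\,\mathbb E^\varphi_{N_k}(y))\Omega_\varphi=\mathbb E^\varphi_P(by)\Omega_\varphi=\mathbb E^\varphi_P(\mathbb E^\varphi_N(b)y)\Omega_\varphi.$$
Hence $e_Pbe_{N_k}=e_P\,\mathbb E^\varphi_N(b)$ on $\LL^2(N,\varphi)$.
\item For $\eta\in P$, $e_Na(\eta\Omega_\varphi)=\mathbb E^\varphi_N(a)\eta\Omega_\varphi$. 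Hence $e_Nae_P=\mathbb E^\varphi_N(a)e_P$.
\end{itemize}
Combining, the compressed operator equals $\mathbb E^\varphi_N(a)\,e_P\,\mathbb E^\varphi_N(b)\in\mathcal K_\varphi(N,P)$. The point to verify carefully is that the convention identifying $\mathcal K_\varphi(N_k,P)$ as operators on $\LL^2(N_k,\varphi)\subseteq\LL^2(M,\varphi)$ is compatible with these compressions; the computation above is exactly that check. This yields $N\in\mathcal H_P$, so $\mathcal H_P$ is closed in $\mathcal E_\varphi$.
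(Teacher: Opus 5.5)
Your proposal is correct and follows the paper's proof essentially verbatim. It uses the same compressed map $\Psi=\mathbb E^\varphi_N\circ\Phi_k\circ\mathbb E^\varphi_{N_k}|_N$ with the same $2$-norm estimate, and the same verification that $e_N(ae_Pb)e_{N_k}|_{\LL^2(N,\varphi)}=\mathbb E^\varphi_N(a)e_P\mathbb E^\varphi_N(b)$, combined with norm approximation of $\widehat\Phi_k$. The only cosmetic differences are that you split the paper's single chain of equalities into two identities, and that normality of $\Phi_k$ comes from the hypothesis rather than from Remark~\ref{RmkNormal}.
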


\begin{proof}
 Let $(N_n)_n$ be a sequence in $\mathcal{H}_P$ and $N\in\mathcal{E}_\varphi$ and assume that $N_n\rightarrow N$ so that $P\subseteq N$. Let $x_1,x_2,...,x_k\in N_1$ and $\varepsilon >0$. Let $i$ be such that $\|\mathbb{E}^\varphi_{N_i}(x_j)-x_j\|_2 <\varepsilon/2$ for all $j$. Since $N_i\in\mathcal{H}_P$, there exist a normal ucp $P$-$P$-bimodular map $\Phi:N_i\rightarrow N_i$ such that $\varphi\circ \Phi\leq \varphi$, $\widehat{\Phi}\in\mathcal{K}_\varphi(N_i,P)$ and $\|\Phi(\mathbb{E}_{N_i}(x_j))-\mathbb{E}_{N_i}(x_j)\|_{2,\varphi}<\varepsilon/2$ for all $j$. Define the normal ucp map $\Psi=\mathbb{E}_N^\varphi\circ\Phi\circ\mathbb{E}^\varphi_{N_i}\vert_{N}: N\rightarrow N$. Since $\mathbb{E}_N^\varphi$ and $\mathbb{E}_{N_i}^\varphi$ are $\varphi$-preserving, we have $\varphi\circ \Psi\leq \varphi$ and since $P\subset N,N_i$, the map $\Psi$ is $P$-$P$-bimodular. Moreover, 
 \begin{eqnarray*}
 \|\Psi(x_j)-x_j\|_{2,\varphi}&=& \|\mathbb{E}_N^\varphi(\Phi(\mathbb{E}_{N_i}^\varphi(x_j)))-x_j\|_{2,\varphi} \\
&\leq& \|\mathbb{E}_N^\varphi(\Phi(\mathbb{E}_{N_i}^\varphi(x_j)))-\mathbb{E}_{N}^\varphi(\mathbb{E}_{N_i}^\varphi(x_j))\|_2+\|\mathbb{E}_{N}^\varphi(\mathbb{E}_{N_i}^\varphi(x_j))-x_j\|_2<\varepsilon
 \end{eqnarray*}
It remains to show that $\widehat{\Psi}\in\mathcal{K}_\varphi(N,P)$. By the discussion before Theorem \ref{Subalgpolish}, one has ${\widehat{\mathbb{E}}^\varphi_A}=e_A$, the orthogonal projection onto ${\rm L}^2(A,\varphi)\subset\LL^2(M,\varphi)$ for any $A\in\mathcal{E}_\varphi$. It follows that $\widehat{\Psi}=e_N\widehat{\Phi}e_{N_i}\vert_{{\rm L}^2(N)}$. Let $\delta>0$. Since $\widehat{\Phi}\in\mathcal{K}_\varphi(N_i,P)$, there exists $l$ and $a_1,\dots,a_l,b_1,\dots,b_l\in N_i$ such that, with $s:=\sum_{k=1}^la_ke_Pb_k\in\mathcal{K}_\varphi(N_i,P)$ one has $\Vert \widehat{\Phi}-s\Vert<\delta$. Note that, for all $x\in N$, denoting by $\Omega$ the cyclic vector,
\begin{eqnarray*}
e_Nse_{N_i}(x\Omega)&=&\sum_{k=1}^le_Na_ke_Pb_ke_{N_i}(x\Omega)=\sum_k\E^\varphi_N(a_k\E^\varphi_P(b_k\E^\varphi_{N_i}(x)))\Omega\\
&=&\sum_k\E^\varphi_N(a_k\E^\varphi_P(\E^\varphi_{N_i}(b_kx)))\Omega
=\sum_k\E^\varphi_N(a_k\E^\varphi_P(b_kx))\Omega\\
&=&\sum_k\E^\varphi_N(a_k)\E^\varphi_P(b_kx)\Omega=\sum_k\E^\varphi_N(a_k)\E^\varphi_P(\E_N^\varphi(b_k)x)\Omega\\
&=&\sum_k\E^\varphi_N(a_k)e_P\E_N^\varphi(b_k)(x\Omega)
\end{eqnarray*}
Hence, $t:=e_Nse_{N_i}\vert_{{\rm L}^2(N)}=\sum_k\E^\varphi_N(a_k)e_P\E_N^\varphi(b_k)\in\mathcal{K}_\varphi(N,P)$. Moreover,
$$\Vert\widehat{\Psi}-t\Vert=\Vert e_N\widehat{\Phi}e_{N_i}\vert_{\LL^2(N)}-e_Nse_{N_i}\vert_{{\rm L}^2(N)}\Vert\leq\Vert\widehat{\Phi}-s\Vert<\delta.$$
It follows that $\widehat{\Psi}\in\mathcal{K}_\varphi(N,P)$.\end{proof}

\begin{remark}
As observed in the amenable case, it is not true, in general, that the set of subalgebra with the Haagerup property is closed in $\mathcal{S}(M)$. Actually, for any infinite dimensional separable Hilbert space $H$, the set of subalgebras of $\textbf{B}(H)$ with the Haagerup property is not closed in $\mathcal{S}(\textbf{B}(H))$. Indeed, take any Connes-embeddable ${\rm II}_1$-factor $N\subset\textbf{B}(H)$ without the Haagerup property (eg. $N=\LL(\Gamma)\subset\textbf{B}(l^2(\Gamma))$ with $\Gamma$ an icc property $(T)$ group). By \cite[Theorem 5.8]{haagerupEffrosMarechalTopology2000} $N$ is approximable by amenable von neumann subalgebras in $\textbf{B}(H)$.
\end{remark}

\subsection{Weakly amenable subalgebras}\label{sec: weakly amenable} Recall that, given a linear map $\Phi\,:\, M\rightarrow M$ its completely bounded norm is defined by
$$\Vert\Phi\Vert_{cb}:=\underset{n\geq 1}{\sup}\Vert\Phi_n\Vert,$$
where $\Phi_n:={\rm id}\ot\Phi\,:\, M_n(\C)\ot M\rightarrow M_n(\C)\ot M$. Note that if $\Phi$ is ucp then $\Vert\Phi\Vert_{cb}=1$. Weak amenability has been introduced by Haagerup.

\begin{df}
Let $\varphi$ be a nfs on $M$. We say that $M$ is \textbf{weakly amenable} if there exists $C>0$ such that for all $k\geq 1$, all $x_1,x_2,...,x_k\in M_1$ and all $\varepsilon >0$, there exists a $\sigma$-weakly continuous linear finite rank map $\Phi:M\rightarrow M$ such that:
\begin{itemize}
\item $\|\Phi(x_j)-x_j\|_{2,\varphi}<\varepsilon$ for all $j$,
\item $\|\Phi\|_{cb}\leq C$.
\end{itemize}
\end{df}

Given a von-Neumann algebra $M$, the infimum over the set of all $C>0$ for which the statement above holds, is called the \textbf{Cowling-Haagerup constant of $M$} and is denoted by $\Lambda_{cb}(M)$. This constant actually does not depend of the choice of a nfs $\varphi$ on $M$.

\begin{prop}\label{thm:weak-amenability-closed}
The map $\Lambda_{cb}:\mathcal{E}_\varphi\rightarrow [0,+\infty]$, $N\mapsto \Lambda_{cb}(N)$
 is lower semicontinuous. In particular, the set of weakly amenable von Neumann subalgebra of $M$ with $\varphi$-preserving expectation is $F_\sigma$ in $\mathcal{E}_\varphi$.
\end{prop}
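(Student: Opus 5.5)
To prove lower semicontinuity, it suffices to show that for every $C\geq 0$ the sublevel set $\{N\in\mathcal E_\varphi : \Lambda_{cb}(N)\leq C\}$ is closed in $\mathcal E_\varphi$. Since $\mathcal E_\varphi$ is Polish (Theorem \ref{Subalgpolish}), hence metrizable, sequences suffice. The $F_\sigma$ statement then follows at once: the set of weakly amenable subalgebras is $\bigcup_{m\in\N}\{N:\Lambda_{cb}(N)\leq m\}$.

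So fix $C$ and a sequence $N_n\to N$ in $\mathcal E_\varphi$ with $\Lambda_{cb}(N_n)\leq C$. I would mimic the proof of Theorem \ref{thm: rel haagerup is closed}, transporting approximating maps through conditional expectations. Fix $x_1,\dots,x_k\in N_1$, $\varepsilon>0$ and $C'>C$.

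\begin{enumerate}
\item By Theorem \ref{Subalgpolish}, $\mathbb E^\varphi_{N_n}(x_j)\to \mathbb E^\varphi_N(x_j)=x_j$ in $\norm\cdot_{2,\varphi}$. Choose $i$ with $\norm{\mathbb E^\varphi_{N_i}(x_j)-x_j}_{2,\varphi}<\varepsilon/2$ for all $j$.
\item The elements $y_j:=\mathbb E^\varphi_{N_i}(x_j)$ lie in $(N_i)_1$, and $\Lambda_{cb}(N_i)\leq C<C'$. Hence there is a normal finite rank map $\Phi:N_i\to N_i$ with $\norm\Phi_{cb}\leq C'$ and $\norm{\Phi(y_j)-y_j}_{2,\varphi}<\varepsilon/2$. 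Here I use that the $2$-norm of $\varphi|_{N_i}$ is the restriction of $\norm\cdot_{2,\varphi}$, and that $\Lambda_{cb}$ does not depend on the chosen state.
\item Set $\Psi:=\mathbb E^\varphi_N\circ\Phi\circ\mathbb E^\varphi_{N_i}|_N : N\to N$. It is normal and of finite rank. Since the two conditional expectations are ucp, hence complete contractions, $\norm\Psi_{cb}\leq C'$.
\item Using that $\mathbb E^\varphi_N$ is a $\norm\cdot_{2,\varphi}$-contraction fixing $x_j$,
\[
\norm{\Psi(x_j)-x_j}_{2,\varphi}\leq \norm{\Phi(y_j)-y_j}_{2,\varphi}+\norm{\mathbb E^\varphi_N(y_j)-x_j}_{2,\varphi}<\varepsilon .
\]
\end{enumerate}

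This gives $\Lambda_{cb}(N)\leq C'$ for every $C'>C$, hence $\Lambda_{cb}(N)\leq C$.

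One subtlety concerns the definition of $\Lambda_{cb}$ as an infimum, so that $\Lambda_{cb}(N_i)\leq C$ may only be witnessed by constants $C'>C$; the argument handles this by working with arbitrary $C'>C$. Another concerns the convention for non-weakly amenable algebras, namely $\Lambda_{cb}=+\infty$, for which the sublevel statement is trivial. I expect no real obstacle. The only point needing care is the normality and $\sigma$-weak continuity of $\Psi$, which holds because $\varphi$-preserving conditional expectations are normal (Remark \ref{RmkNormal}).
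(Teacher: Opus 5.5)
Your proposal is correct and follows essentially the same argument as the paper. You show that the sublevel sets $\{N:\Lambda_{cb}(N)\leq C\}$ are closed by compressing an approximating map for $N_i$ to $\Psi=\mathbb{E}^\varphi_N\circ\Phi\circ\mathbb{E}^\varphi_{N_i}|_N$, and your slack constant $C'>C$ plays exactly the role of the paper's $\delta$.
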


\begin{proof}
Fix $C>0$. It suffices to show that $\{N\in \mathcal{E}_\varphi\,:\, \Lambda_{cb}(N)\leq C\}$ is closed. Let $N_n,N\in\mathcal{E}_\varphi$ such that $N_n\rightarrow N$ and assume that $\Lambda_{cb}(N)\leq C$ for all $n$. Let $x_1,x_2,...,x_k\in N_1$ and $\varepsilon >0$. Let $i$ be such that $\|\mathbb{E}^\varphi_{N_i}(x_j))-x_j\|_{2,\varphi} \leq \varepsilon/2$ for all $j=1,\dots,k$. Now consider the elements $\mathbb{E}^\varphi_{N_i}(x_j)$ in $N_i$. Since $\Lambda_{cb}(N_i)\leq C$, there exists a $\sigma$-weakly continuous linear finite rank map $\Phi:N_i\rightarrow N_i$ such that $\|\Phi(\mathbb{E}^\varphi_{N_i}(x_j))-\mathbb{E}^\varphi_{N_i}(x_j)\|_{2,\varphi}<\varepsilon/2$ for all $j$ and further, $\|\Phi\|_{cb} < C+\delta$, with $\delta>0$ chosen arbitrarily. Consider now the map $\Psi=\mathbb{E}^\varphi_{N}\circ\Phi\circ \mathbb{E}^\varphi_{N_i}\vert_{N}:N\rightarrow N$. Note that $\Psi$ is $\sigma$-weakly continuous, linear of finite rank and $\|\Psi\|_{cb}\leq\|\Phi\|_{cb} < C+\delta$. Also, 
 $$\|\Psi(x_j)-x_j\|_{2,\varphi}\leq \|\mathbb{E}^\varphi_{N}(\Phi(\mathbb{E}^\varphi_{N_i}(x_j)))-\mathbb{E}^\varphi_{N}(\mathbb{E}^\varphi_{N_i}(x_j))\|_2+\|\mathbb{E}^\varphi_{N}(\mathbb{E}^\varphi_{N_i}(x_j))-x_j\|_2 <\varepsilon$$
 for all $j$. Since our choice of $\delta$ was arbitrary, it hence follows that $\Lambda_{cb}(N)\leq C$.\end{proof}

 \begin{remark}
It is not true, in general, that the map $\Lambda_{cb}\,:\,\mathcal{S}(M)\rightarrow[0,+\infty]$ is lower semi-continuous. Actually it is not lower semi-contiunous for $M=\textbf{B}(H)$ ($H$ any infinite dimensional separable Hilbert space). Indeed, by \cite[Theorem 5.8]{haagerupEffrosMarechalTopology2000}, $\{N\in\mathcal{S}(\textbf{B}(H))\,:\,\Lambda_{cb}(N)\leq 1\}$ is not closed since it contains all the amenable subalgebras of $\textbf{B}(H)$ but does not contain any Connes-embeddable and not weakly amenable subalgebra such as $\LL(\Z^2\rtimes{\rm SL}_2(\Z))$ which is not weakly amenable by is a result of Haagerup \cite{Ha16} and Connes-embeddable since $\Z^2\rtimes{\rm SL}_2(\Z)$ is sofic, by \cite{ES11} and because $\Z^2\rtimes{\rm SL}_2(\Z)\simeq \Z^2\rtimes\Z/4\Z\underset{\Z^2\rtimes\Z/2\Z}{*}\Z^2\rtimes\Z/6\Z$. 
 \end{remark}

 We now provide examples showing that the $\Lambda_{cb}$ map $\mathcal{E}_\varphi\rightarrow[0,+\infty]$, $N\mapsto \Lambda_{cb}(N)$ is not continuous in general and the set of weakly amenable subalgebras in $\mathcal{E}_\varphi$ is neither closed or open in general. We need some preparation.

 Let ${\rm Sub}(\Gamma)$ be the set of subgroups of a given discrete countable group $\Gamma$ and view ${\rm Sub}(\Gamma)\subset \{0,1\}^\Gamma$ by identifying a subgroup $\Lambda$ with its characteristic function $1_\Lambda\in\{0,1\}^\Gamma$. It is easy to see that ${\rm Sub}(\Gamma)$ is closed in $\{0,1\}^\Gamma$ hence it is a compact space (with the induced product topology). The following is well know \cite[Proposition 4.1]{TYH25} but we provide a simpler proof.

 \begin{prop}\label{PropGroupsAlgebras}
  The map ${\rm Sub}(\Gamma)\rightarrow\mathcal{S}(\LL(\Gamma))$, $\Lambda\mapsto{\rm L}(\Lambda)$ is continuous.   
 \end{prop}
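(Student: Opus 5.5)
The plan is to route everything through the space $\mathcal E_\tau$, where $\tau$ is the canonical trace on $\LL(\Gamma)$. Every $\LL(\Lambda)$ admits the $\tau$-preserving conditional expectation $\mathbb E_{\LL(\Lambda)}\big(\sum_g a_g u_g\big)=\sum_{g\in\Lambda}a_g u_g$, so it lies in $\mathcal E_\tau$. By Theorem~\ref{thm: marechal same as one lip}, the Maréchal topology on $\mathcal E_\tau$ coincides with the topology induced from $\onelip_1$, that is, with pointwise $\norm\cdot_{2,\tau}$-convergence of conditional expectations. By Remark~\ref{RmkGrassmanian}, this is the same as strong convergence of the Jones projections $e_{\LL(\Lambda)}$ on $\ell^2(\Gamma)$. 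Since ${\rm Sub}(\Gamma)$ is metrizable, it suffices to treat sequences: assume $\Lambda_n\to\Lambda$ in $\{0,1\}^\Gamma$, and prove that $e_{\LL(\Lambda_n)}\to e_{\LL(\Lambda)}$ strongly.

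This step is a direct computation. The projection $e_{\LL(\Lambda)}$ is the orthogonal projection of $\ell^2(\Gamma)$ onto $\ell^2(\Lambda)$, i.e.\ multiplication by $1_\Lambda$. Convergence $\Lambda_n\to\Lambda$ means $1_{\Lambda_n}(g)\to 1_\Lambda(g)$ for every $g\in\Gamma$. Hence $e_{\LL(\Lambda_n)}\delta_g\to e_{\LL(\Lambda)}\delta_g$ for each basis vector $\delta_g$; in fact these vectors are eventually equal. The projections are uniformly bounded by $1$, so strong convergence on the dense span of $\{\delta_g\}$ extends to all of $\ell^2(\Gamma)$ by a standard $\varepsilon/3$ argument. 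The same conclusion also follows from the second part of Theorem~\ref{thm: one lip is polish} with $D$ the unit ball of $\C\Gamma$, which is $\norm\cdot_2$-dense in the unit ball of $\LL(\Gamma)$ by Kaplansky. On such $x=\sum a_gu_g$ (a finite sum) we get $\mathbb E_{\LL(\Lambda_n)}(x)=\mathbb E_{\LL(\Lambda)}(x)$ for all large $n$.

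Chaining these gives the result: the map ${\rm Sub}(\Gamma)\to\mathcal E_\tau$ is continuous for the $\onelip_1$ topology. Theorem~\ref{thm: marechal same as one lip} identifies this topology with the Maréchal topology restricted to $\mathcal E_\tau$, which is a subspace of $\mathcal S(\LL(\Gamma))$, so the map into $\mathcal S(\LL(\Gamma))$ is continuous.

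The only point needing care is the passage from the dense set to all of $\LL(\Gamma)_1$. This is exactly what the $1$-Lipschitz framework, or equivalently the uniform boundedness of the projections, handles. I therefore expect no real obstacle; the substantive work is already contained in Theorem~\ref{thm: marechal same as one lip}.
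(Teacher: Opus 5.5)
Your proof is correct and follows essentially the same route as the paper. Both arguments reduce, via the identification of the Mar\'echal topology on $\mathcal E_\tau$ with pointwise $\norm\cdot_2$-convergence of conditional expectations (Theorem~\ref{thm: marechal same as one lip}, which is indeed the right result to invoke here), to checking convergence on the elements $\lambda_\gamma$, where $\mathbb E_{\LL(\Lambda)}(\lambda_\gamma)=1_\Lambda(\gamma)\lambda_\gamma$, and then extend by contractivity. The only difference is phrasing: the paper shows that the set of $x$ for which $\Lambda\mapsto\mathbb E_{\LL(\Lambda)}(x)$ is continuous is a $\norm\cdot_2$-closed subspace (using uniform limits), while you show strong convergence of the diagonal projections on $\ell^2(\Gamma)$ given by multiplication by $1_{\Lambda_n}$.
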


 \begin{proof}
 Let $\tau$ be the canonical trace on $\LL(\Gamma)$. By Lemma \ref{lem: pointwise in 2 norm equiv pointwise weak}, it suffices to show that, for all $x\in \LL(\Gamma)$, the map ${\rm Sub}(\Gamma)\rightarrow \LL^2(M)$, $\Lambda\mapsto\mathbb{E}_{\LL(\Lambda)}(x)$ is continuous, where $\mathbb{E}_N$ denotes the unique $\tau$-preserving conditional expectation onto $N\subset\LL(\Gamma)$.

 Let $X:=\{x\in \LL(\Gamma)\,:\,\Lambda\mapsto\mathbb{E}_{\LL(\Lambda)}(x)\text{ is }\LL^2\text{ continuous}\}$. Then $X$ is clearly a subspace of $\LL(\Gamma)$ and contains $\{\lambda_\gamma\,:\,\gamma\in\Gamma\}$ since $\mathbb{E}_{\LL(\Lambda)}(\lambda_\gamma)=1_\Lambda(\gamma)\lambda_\gamma$. Hence, it suffices to show that $X$ is $\Vert\cdot\Vert_2$ closed. Let $x_n\in X$ be a sequence and $x\in\LL(\Gamma)$ such that $\Vert x_n-x\Vert_2\rightarrow 0$. Since, for all $\Lambda\in{\rm Sub}(\Gamma)$,
 $$\Vert\mathbb{E}_{\LL(\Lambda)}(x)-\mathbb{E}_{\LL(\Lambda)}(x_n)\Vert_2\leq\Vert x_n-x\Vert_2,$$
 it follows that the sequence of continuous functions $(\Lambda\mapsto \mathbb{E}_{\LL(\Lambda)}(x_n))$ converges uniformly on ${\rm Sub}(\Gamma)$ to the function $(\Lambda\mapsto \mathbb{E}_{\LL(\Lambda)}(x))$ hence, $x\in X$.\end{proof}

 In the sequel we write $\Lambda_{cb}(\Gamma):=\Lambda_{cb}(\LL(\Gamma))$.

 \begin{remark}\label{RmkNotWA}
Let $\Gamma$ be a countable discrete group. From Proposition \ref{PropGroupsAlgebras} we deduce the following.
\begin{itemize}
    \item If ${\rm Sub}(\Gamma)\rightarrow[0,+\infty]$, $G\mapsto\Lambda_{cb}(G)$ is not continuous then $\mathcal{S}(\LL(\Gamma))\rightarrow [0,+\infty]$, $N\mapsto \Lambda_{cb}(N)$ is not continuous.
    \item If the set of weakly amenable subgroups of $\Gamma$ is not closed (resp. open) in $\LL(\Gamma)$ then the set of weakly amenable von Neumann subalgebras of $\LL(\Gamma)$ is not closed (resp. open) in $\mathcal{S}(\LL(\Gamma))$.
\end{itemize}
 \end{remark}

 We are now producing explicit examples of groups satisfying the conditions of Remark \ref{RmkNotWA}

 Let $\Lambda$ and $\Sigma$ be a countable groups and define $\Gamma=\Sigma\wr\Lambda:=\Sigma^{(\Lambda)}\rtimes\Lambda$, where $\Sigma^{(\Lambda)}$ denotes $\bigoplus_\Lambda\Sigma$ and the $\Lambda$ action is by left translation of coordinates.

 \begin{lem}\label{WreathContinuity}
     The map ${\rm Sub}(\Sigma)\rightarrow{\rm Sub}(\Gamma)$, $G\mapsto G\wr\Lambda$ is continuous.
 \end{lem}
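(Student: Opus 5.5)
Since $\mathrm{Sub}(\Gamma)$ carries the topology induced from the product topology on $\{0,1\}^\Gamma$, it suffices to check that for every fixed $\gamma\in\Gamma$ the map $G\mapsto 1_{G\wr\Lambda}(\gamma)$ is continuous on $\mathrm{Sub}(\Sigma)$, i.e.\ locally constant. Here $G\wr\Lambda$ is the subgroup $G^{(\Lambda)}\rtimes\Lambda$ of $\Sigma^{(\Lambda)}\rtimes\Lambda$.

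First I will describe membership explicitly. Write $\gamma=(f,\lambda)$ with $f\in\Sigma^{(\Lambda)}$ a finitely supported function $\Lambda\to\Sigma$ and $\lambda\in\Lambda$. Then $\gamma\in G\wr\Lambda$ if and only if $f(\mu)\in G$ for every $\mu\in\Lambda$. Since $1\in G$ always, only the finitely many values on $\mathrm{supp}(f)=\{\mu_1,\dots,\mu_r\}$ matter. Hence
$$1_{G\wr\Lambda}(\gamma)=\prod_{i=1}^r 1_G(f(\mu_i)),$$
with the empty product equal to $1$.

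Each factor $G\mapsto 1_G(\sigma)$, for fixed $\sigma\in\Sigma$, is a coordinate projection $\{0,1\}^\Sigma\to\{0,1\}$ restricted to $\mathrm{Sub}(\Sigma)$, so it is continuous by definition of the product topology. A finite product of continuous $\{0,1\}$-valued functions is continuous. Thus every coordinate of the map $G\mapsto G\wr\Lambda$ is continuous, and so is the map itself.

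There is no real obstacle here. The only point needing care is verifying that $G\wr\Lambda$ is indeed the set of pairs $(f,\lambda)$ with $f$ taking values in $G$, and that this set is a subgroup. Both hold because the shift action of $\Lambda$ preserves $G^{(\Lambda)}$, which is what makes the semidirect product description valid.
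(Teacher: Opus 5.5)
Your proof is correct and is essentially the paper's argument: both rest on the identity $1_{G^{(\Lambda)}}(f)=\prod_{\mu\in\mathrm{supp}(f)}1_G(f(\mu))$ together with continuity of coordinate projections on $\{0,1\}^\Sigma$. The paper first reduces to the map $G\mapsto G^{(\Lambda)}$ into $\mathrm{Sub}(\Sigma^{(\Lambda)})$. You instead treat the $\Lambda$-coordinate directly by observing that $(f,\lambda)\in G\wr\Lambda$ if and only if $f\in G^{(\Lambda)}$, which makes explicit a step the paper leaves implicit.
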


 \begin{proof}
Note that if $H_1$ is a subgroup of $H_2$ then the canonical map ${\rm Sub}(H_1)\rightarrow {\rm Sub}(H_2)$ is clearly continuous. Hence, it suffices to show that the map ${\rm Sub}(\Sigma)\rightarrow {\rm Sub}(\Sigma^\Lambda)$, $G\mapsto G^{(\Lambda)}$ is continuous. Fix $g=(g_\lambda)_{\lambda\in\Lambda}\in G^{(\Lambda)}$ and let $F\subset\Lambda$ be a finite subset such that $g_\lambda=1$ for all $\lambda\notin F$. Note that $1_{G^{(\Lambda)}}(g)=\prod_{\lambda\in F}1_G(g_\lambda)$ which implies that $G\mapsto 1_{G^{(\Lambda)}}(g)$ is continuous.\end{proof}

 \begin{prop}\label{PropExampleWeaklyAmenable}
 Suppose that the following holds.
 \begin{itemize}
     \item $\Lambda$ is non-amenable but weakly amenable.
     \item The trivial subgroup is not isolated in ${\rm Sub}(\Sigma)$.
 \end{itemize}
Then, the $\Lambda_{cb}$ map is not continuous on ${\rm Sub}(\Gamma)$ and the set of weakly amenable subgroups of $\Gamma$ is not open in ${\rm Sub}(\Gamma)$.
 \end{prop}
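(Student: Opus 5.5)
Since the trivial subgroup $\{1\}$ is not isolated in ${\rm Sub}(\Sigma)$, we pick a sequence of subgroups $G_n\in{\rm Sub}(\Sigma)$ with $G_n\neq\{1\}$ and $G_n\to\{1\}$. By Lemma~\ref{WreathContinuity}, $G_n\wr\Lambda\to\{1\}\wr\Lambda=\Lambda$ in ${\rm Sub}(\Gamma)$, where $\Lambda$ is identified with the complement subgroup of the semidirect product $\Gamma=\Sigma^{(\Lambda)}\rtimes\Lambda$. The limit $\Lambda$ is weakly amenable by hypothesis, so $\Lambda_{cb}(\Lambda)<\infty$. The plan is to show $\Lambda_{cb}(G_n\wr\Lambda)=+\infty$ for every $n$. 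Then $\Lambda_{cb}$ fails to be continuous at $\Lambda$, and $\Lambda$ is a weakly amenable subgroup that is a limit of non-weakly amenable subgroups, so the set of weakly amenable subgroups is not open.

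The key step is the group-theoretic fact that a wreath product $H\wr\Lambda$ with $H$ nontrivial and $\Lambda$ non-amenable is not weakly amenable. This is the result of Ozawa (\emph{Examples of groups which are not weakly amenable}, Kyoto J.\ Math.\ 2012), which I would cite directly. If one prefers a more elementary route, it suffices to find a non-weakly amenable subgroup, because weak amenability passes to subgroups with $\Lambda_{cb}(H')\le\Lambda_{cb}(H)$ (restrict multipliers, or equivalently use the $\tau$-preserving conditional expectation onto $\LL(H')$ as in the proof of Proposition~\ref{thm:weak-amenability-closed}). Since $G_n\neq\{1\}$, it contains a nontrivial cyclic subgroup $C$, and $C\wr\Lambda\le G_n\wr\Lambda$. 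Hence the claim reduces to showing that $C\wr\Lambda$ is not weakly amenable for $C$ nontrivial cyclic and $\Lambda$ non-amenable, which is exactly the case covered by Ozawa's theorem (it applies to any nontrivial base group).

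Finally, Remark~\ref{RmkNotWA} and Proposition~\ref{PropGroupsAlgebras} transfer the conclusion to the von Neumann algebra level, although the statement itself only asks for the group level. I expect the main obstacle to be the justification that $G_n\wr\Lambda$ is not weakly amenable: this genuinely needs Ozawa's theorem, and no elementary argument from the paper's tools seems available. Everything else is the continuity lemma combined with monotonicity of $\Lambda_{cb}$ under passing to subgroups.
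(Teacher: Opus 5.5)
Your proposal is correct and matches the paper's proof. The paper also picks nontrivial $\Sigma_n\to\{1\}$, applies Lemma~\ref{WreathContinuity} to get $\Sigma_n\wr\Lambda\to\Lambda$, and invokes \cite[Corollary 4]{Oz12} to get $\Lambda_{cb}(\Sigma_n\wr\Lambda)=+\infty$ while $\Lambda_{cb}(\Lambda)<\infty$. Your reduction to cyclic subgroups is harmless but unnecessary, since Ozawa's corollary already applies to any nontrivial base group.
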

 \begin{proof}
Since $\{1\}\in{\rm Sub}(\Sigma)$ is not isolated, there exists a sequence of non-trivial subgroups $\Sigma_n<\Sigma$ such that $\Sigma_n\rightarrow\{1\}$. By Lemma \ref{WreathContinuity}, we have $\Sigma_n\wr\Lambda\rightarrow \Lambda$. However, since each $\Sigma_n$ is non-trivial, \cite[Corollary 4]{Oz12} implies that $$\Lambda_{cb}(\Sigma_n\wr\Lambda)=+\infty\text{ for all }n.$$ Since $\Lambda$ is assumed to be weakly amenable, it ends the proof.
 \end{proof}

 \begin{example}
     An example of the situation above is $\Lambda=\mathbb{F}_n$ the free group on $n\geq 2$ generators and $\Sigma=\Z$. The trivial subgroup $\{0\}<\Z$ is not isolated since it is easily seen to be the limit of the sequence $(p^n\Z)_n$, for any prime number $p$.
 \end{example}

 Another family of examples is described in the next proposition.

 \begin{prop}
     Let $\Gamma_k$ be a sequence of groups such that $\inf_{k\in\N}\Lambda_{cb}(\Gamma_k)>1$ and define $\Gamma:=\oplus_k\Gamma_k$. Then, the $\Lambda_{cb}$ map is not continuous on ${\rm Sub}(\Gamma)$. If we assume moreover that $\Gamma_k$ is weakly amenable for all $k$ then, the set of weakly amenable subgroups of $\Gamma$ is not closed in ${\rm Sub}(\Gamma)$.
 \end{prop}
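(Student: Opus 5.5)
The plan is to approximate $\Gamma$ by the finite partial sums $G_n:=\oplus_{k\leq n}\Gamma_k$, viewed as subgroups of $\Gamma$. Since $\Gamma=\bigcup_n G_n$ is an increasing union, $1_{G_n}(g)\to 1_\Gamma(g)$ for every $g\in\Gamma$: any $g$ has finite support, so it lies in $G_n$ for $n$ large. Thus $G_n\to\Gamma$ in ${\rm Sub}(\Gamma)$. For the second statement I will instead use the "tails" $T_n:=\oplus_{k\geq n}\Gamma_k$. These decrease to the trivial subgroup, so $1_{T_n}(g)\to 0$ for $g\neq 1$ and $T_n\to\{1\}$. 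Alternatively, I could use a single factor $\Gamma_n\to\{1\}$, since $1_{\Gamma_n}(g)=0$ eventually for each $g\neq 1$.

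First I handle non-continuity. The key input is that $\Lambda_{cb}$ of a direct product is multiplicative, $\Lambda_{cb}(G\times H)=\Lambda_{cb}(G)\Lambda_{cb}(H)$ (Cowling--Haagerup). For the infinite sum I will need $\Lambda_{cb}(\Gamma)\geq \Lambda_{cb}(G_n)$ for all $n$, which holds because $G_n$ is a subgroup: restricting multipliers, or composing with the conditional expectation as in the proof of Proposition \ref{thm:weak-amenability-closed}, gives monotonicity under subgroups. Hence $\Lambda_{cb}(G_n)=\prod_{k\leq n}\Lambda_{cb}(\Gamma_k)\geq c^n$, where $c:=\inf_k\Lambda_{cb}(\Gamma_k)>1$, and so $\Lambda_{cb}(\Gamma)=+\infty$. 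To get a discontinuity, the single-factor sequence $\Gamma_n\to\{1\}$ works: $\Lambda_{cb}(\Gamma_n)\geq c>1$ while $\Lambda_{cb}(\{1\})=1$. So $\Lambda_{cb}$ does not converge along this sequence and is not continuous. It is not even upper semicontinuous at $\{1\}$, although lower semicontinuity (Proposition \ref{thm:weak-amenability-closed} combined with Proposition \ref{PropGroupsAlgebras}) is consistent with this.

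Next I show the set of weakly amenable subgroups is not closed, assuming each $\Gamma_k$ is weakly amenable. Each $G_n$ is a finite direct product of weakly amenable groups, so by multiplicativity $\Lambda_{cb}(G_n)=\prod_{k\leq n}\Lambda_{cb}(\Gamma_k)<\infty$ and $G_n$ is weakly amenable. But $G_n\to\Gamma$ and $\Lambda_{cb}(\Gamma)=+\infty$ by the previous step, so the limit is not weakly amenable, and the set is not closed. By Remark \ref{RmkNotWA}, both conclusions transfer to $\mathcal{S}(\LL(\Gamma))$.

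The main obstacle is justifying the two $\Lambda_{cb}$ facts at the group level. The first is the product formula, in particular the lower bound $\Lambda_{cb}(G\times H)\geq\Lambda_{cb}(G)\Lambda_{cb}(H)$, which is the nontrivial direction due to Cowling--Haagerup, and I will cite it rather than reprove it. The second is monotonicity under passing to subgroups, which I will obtain as in Proposition \ref{thm:weak-amenability-closed} by compressing with $\mathbb{E}_{\LL(G_n)}$.
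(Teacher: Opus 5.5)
Your proposal is correct and follows essentially the same route as the paper: the single factors $\Gamma_n\to\{1\}$ show discontinuity, and the partial sums $\oplus_{k\leq n}\Gamma_k\to\Gamma$ show non-closedness. For the latter you use, as the paper does, the Cowling--Haagerup product formula together with monotonicity under subgroups to get $\Lambda_{cb}(\Gamma)=+\infty$.
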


 \begin{proof}
We first observe that, viewing each $\Gamma_k\in{\rm Sub}(\Gamma)$ in the obvious way, we have $\Gamma_k\rightarrow \{1\}$. Indeed, let $g=(g_k)_k\in\Gamma\setminus\{1\}$ and let $i$ be such that $g_i\neq 1$. Then, for all $k>i$ one has $1_{\Gamma_k}(g)=0$. Since $\Lambda_{cb}(\{1\})=1$, this shows that $\Lambda_{cb}$ is not continuous. Next, consider the sequence of subgroups defined by $$\Gamma_{\leq n}:=\oplus_{k=1}^n\Gamma_k\simeq \Gamma_1\times\dots\times\Gamma_n.$$
Then, by \cite{CH89}, one has, for all $n\in\N$, $\Lambda_{cb}(\Gamma)\geq\Lambda_{cb}(\Gamma_{\leq n})=\prod_{k=1}^n\Lambda_{cb}(\Gamma_k)\geq C^n$, where $C:=\inf_{k\in\N}\Lambda_{cb}(\Gamma_k)>1$. It follows that $\Lambda_{cb}(\Gamma)=+\infty$. To conclude the proof, it suffices to show that $\Gamma_{\leq n}\rightarrow\Gamma$. Let $g=(g_k)_k\in\Gamma$ and $F\subset\N$ be a finite subset such that $g_k=1$ for all $k\notin F$. Then, for all $n>\max F$, one has $1_{\Gamma_{\geq n}}(g)=1$. It shows that $\Gamma_{\leq n}\rightarrow\Gamma$ and concludes the proof.\end{proof}
 
\begin{example}
To get an explicit example of the situation above one could, by \cite{CH89}, take the groups $\Gamma_k$ to be any lattices in ${\rm Sp}(1,n)$ $(n\geq 2)$ or in ${\rm F}_{4(-20)}$  .
\end{example}

\subsection{Applications to Kechris spaces of nonsingular equivalence relations}

We now briefly mention applications of our results to non-singular equivalence relations.
For a more detailed account, the reader should consult \cite{lemaitreDenseorbits2026}. 

Given a (countable) non-singular equivalence relation $\mathcal R$ on a standard 
probability space $(X,\mu)$, define its \textbf{Kechris space} as the space 
$\Sub(\mathcal R)$ of (measurable) subequivalence relations up to a null set. 
Recall that $\mathcal R$ can be endowed with the (left) counting measure 
$M$ defined by $M(A)=\int_X \abs{A_x}d\mu(x)$, which is $\sigma$-finite. 
The measure algebra of $(\mathcal R,M)$ (space of Borel subsets of $\mathcal R$ up to a null set)
is thus a Polish space, a compatible complete metric being given by 
$d_m(A,B)=m(A\bigtriangleup B)$ where $m$ is a fixed probability measure equivalent to $M$.
We can then view $\Sub(\mathcal R)$ as a subset of the measure algebra of $(\mathcal R,M)$, 
and it is actually closed therein, hence Polish for the induced topology, which 
we call the \textbf{Kechris topology}. 

Given a non-singular equivalence relation $\mathcal R$, 
one defines its von Neumann algebra $\LL\mathcal R$, and 
the map $\Sub(\mathcal R)\to \mathcal S(\LL\mathcal R)$ is a homeomorphism
onto its image, which is exactly the set of subalgebras containing $\LL^\infty(X,\mu)$, 
and only consists of subalgebras which are expected upon
with respect to the canonical state $\varphi$ associated to the measure $M$,
namely $\varphi(x)=\la x 1_{\Delta X}, 1_{\Delta_X}\ra_{\LL^2(\mathcal R,M)}$
(see \cite[Section~7]{zhouNoncommutativeTopologicalBoundaries2025}).

    In \cite{zhouNoncommutativeTopologicalBoundaries2025} it is mentioned that Kechris' result 
that the
space of amenable subequivalence relations of a given pmp equivalence relation is closed
also follows from the fact that amenable subalgebras form a closed subset in a finite von Neumann algebra,
and that an equivalence relation is amenable iff its von Neumann algebra is amenable. 
Let us remark here that the same argument works in the non pmp case 
as a consequence of our Proposition~\ref{prop: amenable is closed v1}
and of the fact that von Neumann algebras of subequivalence relations are contained in $\mathcal E_{\varphi}$:

\begin{prop}
    The space of amenable subequivalence relations of a given non-singular equivalence relation $\mathcal R$
    is closed in the Kechris space $\Sub(\mathcal R)$. \qed
\end{prop}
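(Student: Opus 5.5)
The plan is to transport the problem into $\mathcal E_\varphi$ via the embedding $\Sub(\mathcal R)\to\mathcal S(\LL\mathcal R)$, $\mathcal S\mapsto \LL\mathcal S$, and then apply Proposition~\ref{prop: amenable is closed v1}. Let $\mathcal A\subseteq \Sub(\mathcal R)$ be the set of amenable subequivalence relations. We must show that $\mathcal A$ is closed.

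First we fix the ingredients recalled above. The map $\iota:\Sub(\mathcal R)\to\mathcal S(\LL\mathcal R)$ is a homeomorphism onto its image. Its image consists of subalgebras containing $\LL^\infty(X,\mu)$, and all of these lie in $\mathcal E_\varphi$ for the canonical state $\varphi(x)=\la x1_{\Delta_X},1_{\Delta_X}\ra$. By Theorem~\ref{thm: marechal same as one lip}, the Maréchal topology restricted to $\mathcal E_\varphi$ is the Polish topology studied in Section~\ref{sec:cond_exp}. Hence $\iota$ is a homeomorphism of $\Sub(\mathcal R)$ onto a subspace of $\mathcal E_\varphi$.

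Second, we identify $\mathcal A$ inside this picture. By the Connes--Feldman--Weiss / Zimmer theorem, a non-singular equivalence relation $\mathcal S$ is amenable if and only if $\LL\mathcal S$ is an amenable (equivalently, injective) von Neumann algebra. This holds for the subrelation as an equivalence relation in its own right, and the identification $\LL\mathcal S\subseteq\LL\mathcal R$ is compatible with its intrinsic von Neumann algebra. Therefore
$$\mathcal A=\iota^{-1}\bigl(\{N\in\mathcal E_\varphi: N \text{ amenable}\}\bigr).$$

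Third, Proposition~\ref{prop: amenable is closed v1} states that the set of amenable $N\in\mathcal E_\varphi$ is closed in $\mathcal E_\varphi$. Its preimage under the continuous map $\iota:\Sub(\mathcal R)\to\mathcal E_\varphi$ is therefore closed, which gives the claim.

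The only delicate point is the second step: checking that $\LL\mathcal S$, viewed as a subalgebra of $\LL\mathcal R$, is isomorphic to the intrinsic von Neumann algebra of $\mathcal S$. This is what lets us apply the amenability equivalence. The cited description in \cite[Section~7]{zhouNoncommutativeTopologicalBoundaries2025} supplies exactly this identification. Continuity of $\iota$ into $\mathcal E_\varphi$ with its own topology needs no extra work, because $\mathcal E_\varphi$ is closed in $\mathcal S(\LL\mathcal R)$ and carries the induced topology.
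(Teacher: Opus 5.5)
Your proposal is correct and follows essentially the paper's argument. The paper likewise combines the homeomorphic embedding $\Sub(\mathcal R)\to\mathcal S(\LL\mathcal R)$, whose image lies in $\mathcal E_\varphi$, with the equivalence between amenability of a relation and amenability of its von Neumann algebra, and then applies Proposition~\ref{prop: amenable is closed v1}.
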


We now make similar remarks concerning the Haagerup property (see \cite{kidaSplittingOrbitEquivalence2017} for the definition). 
Indeed, by \cite{anantharaman-delarocheHaagerupPropertyDiscrete2013}, a non-singular equivalence relation $\mathcal S$ has 
the Haagerup property if and only if $\LL\mathcal S$ has the Haagerup property relative to $\LL^\infty(X,\mu)$. 
Combining this with \cite[Theorem~7.2]{zhouNoncommutativeTopologicalBoundaries2025}, we arrive at the following result.

\begin{prop}
    Given a non-singular equivalence relation $\mathcal R$,
    the space of subequivalence relations with the Haagerup property 
    is closed in  $\Sub(\mathcal R)$. \qed
\end{prop}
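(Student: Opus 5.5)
The statement to prove is that, for a non-singular equivalence relation $\mathcal R$ on $(X,\mu)$, the set of subequivalence relations with the Haagerup property is closed in $\Sub(\mathcal R)$. The plan is to transport it into $\mathcal E_\varphi$ inside $\LL\mathcal R$, where $\varphi$ is the canonical state attached to $M$, and then apply Theorem~\ref{thm: rel haagerup is closed} with $P=A:=\LL^\infty(X,\mu)$.

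First I record the ingredients quoted before the statement. By \cite[Theorem~7.2]{zhouNoncommutativeTopologicalBoundaries2025}, the map $\iota:\Sub(\mathcal R)\to\mathcal S(\LL\mathcal R)$, $\mathcal S\mapsto \LL\mathcal S$, is a homeomorphism onto its image. This image is the set of subalgebras containing $A$, and it lies in $\mathcal E_\varphi$. The subalgebra $A$ itself is expected upon: it equals $\LL(\Delta_X)$, and $\E^\varphi_A$ is restriction to the diagonal. So $A\in\mathcal E_\varphi$. By Theorem~\ref{thm: marechal same as one lip}, the Maréchal topology on $\mathcal E_\varphi$ agrees with the topology of pointwise $2$-norm convergence of expectations. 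Hence $\iota$ is also a homeomorphism onto its image when $\mathcal E_\varphi$ carries the Polish topology of Theorem~\ref{Subalgpolish}.

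Next I identify the Haagerup subrelations. By \cite{anantharaman-delarocheHaagerupPropertyDiscrete2013}, $\mathcal S$ has the Haagerup property if and only if $\LL\mathcal S$ has the Haagerup property relative to $A$. In the notation of Theorem~\ref{thm: rel haagerup is closed}, this says exactly that
$$\iota(\{\mathcal S\in\Sub(\mathcal R): \mathcal S \text{ has the Haagerup property}\})=\mathcal H_A\cap \iota(\Sub(\mathcal R)).$$

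Now I conclude. By Theorem~\ref{thm: rel haagerup is closed}, $\mathcal H_A$ is closed in $\mathcal E_\varphi$. Intersecting with the image of $\iota$ gives a relatively closed set. Pulling it back by the homeomorphism $\iota$ gives the result.

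The main obstacle is a convention mismatch. The relative Haagerup property of \cite{anantharaman-delarocheHaagerupPropertyDiscrete2013} is formulated for the inclusion $A\subseteq \LL\mathcal S$, and one must check it agrees with the \cite{CKSVW23} formulation used in Theorem~\ref{thm: rel haagerup is closed}. That formulation requires $\widehat\Phi\in\mathcal K_\varphi(\LL\mathcal S,A)$, measured with respect to the state $\varphi$ restricted to $\LL\mathcal S$. I would first try to check that the two notions of relative compactness agree for Cartan-type inclusions with expectation. If needed, I would use that the property is independent of the chosen faithful normal state preserved by the expectation, as shown in \cite{CKSVW23}.
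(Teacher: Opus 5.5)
Your proposal is correct and is essentially the paper's argument. The paper likewise combines Theorem~\ref{thm: rel haagerup is closed} (with $P=\LL^\infty(X,\mu)$), the characterization from \cite{anantharaman-delarocheHaagerupPropertyDiscrete2013}, and the result from \cite{zhouNoncommutativeTopologicalBoundaries2025} that $\mathcal S\mapsto \LL\mathcal S$ is a homeomorphism onto the subalgebras containing $\LL^\infty(X,\mu)$, all of which lie in $\mathcal E_\varphi$; the paper never checks separately the compatibility of relative Haagerup conventions that you flag and simply takes it for granted through the citation, and pulling back a closed set only needs continuity of $\iota$.
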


\begin{remark}
The fact that the space of amenable subequivalence relations is always closed 
was originally proven by Kechris in the pmp case by combining the following two facts
(also true in the non-singular case, see \cite{lemaitreDenseorbits2026} for details): 
\begin{itemize}
    \item Every converging sequence of equivalence relations actually converges 
    to the liminf of one of its subsequences
    \item The class of amenable equivalence relations is closed under directed 
    countable unions and taking subequivalence relations. 
\end{itemize}
A similar approach can be used to recover our result on the Haagerup property.
Similar applications may also be expected for weak amenability, however the corresponding theory for equivalence relations, and more generally for groupoids, is not yet sufficiently developed \cite{pachecoWeaklyAmenableGroupoids2025}, so we have not pursued them here.
\end{remark}

\section{On subalgebras without conditional expectation}\label{sec:no_cond_exp}

Let $\mathcal{E}_\varphi^{c}$ denote set of von-Neumann subalgebras of $M$ which lack $\varphi$-invariant conditional expectations i.e the complement of $\mathcal{E}_{\varphi}$ in $\mathcal{S}(M)$. We shall use the following version of the characterization of finite von Neumann algebras due to Takesaki \cite[Section 6]{takesakiConditionalExpectationsNeumann1972}.

\begin{lem}\label{LemFinite}
$M$ is finite if and only if $\underset{\varphi\text{ nfs on }M}{\bigcap}\mathcal{E}_\varphi^c=\emptyset$.
\end{lem}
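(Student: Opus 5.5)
The statement says: $\bigcap_\varphi \mathcal E_\varphi^c=\emptyset$ exactly when every von Neumann subalgebra $N\subseteq M$ admits, for \emph{some} faithful normal state $\varphi$ on $M$, a $\varphi$-preserving conditional expectation. By Takesaki's theorem recalled at the start of Section~\ref{sec:cond_exp}, this is the same as: for every $N\in\mathcal S(M)$ there is a nfs $\varphi$ with $\sigma_t^\varphi(N)=N$ for all $t$. I will prove the two implications separately.

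\emph{Finite $\Rightarrow$ empty intersection.} Assume $M$ is finite with separable predual. Then $M$ carries a faithful normal tracial state $\tau$; separability of the predual is used here, since a countably decomposable finite algebra admits a faithful normal trace. Its modular group is trivial, $\sigma_t^\tau=\mathrm{id}$, so every $N\in\mathcal S(M)$ is globally invariant. Takesaki's theorem then gives $N\in\mathcal E_\tau$. Hence $\mathcal E_\tau^c=\emptyset$, and therefore so is the intersection.

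\emph{Empty intersection $\Rightarrow$ finite.} I argue by contraposition. Suppose $M$ is not finite. Then there is a nonzero projection $p\in M$ that is properly infinite, or at least infinite. I would build a specific subalgebra $N$ that admits no faithful normal conditional expectation at all; by Takesaki's theorem this forces $N\in\mathcal E_\varphi^c$ for every nfs $\varphi$. The standard candidate comes from an isometry $v\in pMp$ with $vv^*=q<p$. A natural choice is a subalgebra isomorphic to $\mathbf B(\ell^2)$, or an abelian one, placed so that $N$ contains the infinite projection $p$ but no faithful normal expectation onto $N$ exists. This is the classical Takesaki example from \cite[Section 6]{takesakiConditionalExpectationsNeumann1972}.

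A cleaner criterion is the following. If $E:M\to N$ is a faithful normal conditional expectation and $\varphi$ is a nfs on $M$, then $\varphi\circ E$ is a nfs whose modular group leaves $N$ invariant. So it suffices to find $N$ that is not globally invariant under the modular group of any nfs of $M$. Concretely, I would take a copy of $\mathbf B(\ell^2)\bar\otimes A\subseteq M$ coming from properly infinite pieces of $M$. Inside it I would take $N$ to be a unital subalgebra generated by a non-unitary isometry $v$ and the center, arranged so that any expectation would give a faithful normal state $\psi$ on $N$ with $\psi(vv^*)=\psi(v^*v)$. That equality forces $v^*v-vv^*=0$ by faithfulness, a contradiction. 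The required equality should come from $\sigma^\psi$-invariance together with $v$ lying in the centralizer of the restricted state.

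The main obstacle is this second direction: actually exhibiting, in an arbitrary non-finite $M$, a subalgebra without any faithful normal expectation, and justifying that $v$ lies in the centralizer. My fallback is to cite Takesaki's original construction directly, since the paper attributes the lemma to \cite[Section 6]{takesakiConditionalExpectationsNeumann1972} and presents it as a reformulation, and then only check the translation through Takesaki's theorem above.
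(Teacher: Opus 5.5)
Once you take your fallback, your argument is the paper's proof. The finite direction is identical: a faithful normal tracial state has trivial modular group, so every subalgebra lies in $\mathcal E_\tau$. For the non-finite direction the paper simply cites \cite[Section 6]{takesakiConditionalExpectationsNeumann1972} for a maximal abelian subalgebra $A\subset M$ with no normal conditional expectation, so that $A\notin\mathcal E_\varphi$ for every nfs $\varphi$. Your translation through faithful normal expectations is also correct.

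You should, however, discard the explicit construction you sketch before the fallback, because it cannot produce an example.

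\emph{Type I subfactors never obstruct.} A unital type I subfactor $F\subseteq M$ always splits as $M\cong F\bar\otimes(F'\cap M)$. Then $\mathrm{id}\otimes\omega$, with $\omega$ a nfs on $F'\cap M$, is a faithful normal expectation onto $F$. So a copy of $\mathbf B(\ell^2)$ is never a counterexample, and in $\mathbf B(\ell^2)$ the unilateral shift already generates the whole algebra.

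\emph{The isometry itself never obstructs.} The Wold decomposition splits $W^*(v)\vee\mathcal Z(M)$, by a projection central in it, into two pieces. One is generated by a pure isometry and the center; it has the form $F\bar\otimes\mathcal Z(Q)$ inside a corner $F\bar\otimes Q$. It therefore always carries a faithful normal expectation: tensor $\mathrm{id}$ with a state-preserving expectation of $Q$ onto its center. The other piece is abelian. Any obstruction must therefore come from an abelian subalgebra, which is exactly the masa mechanism behind Takesaki's example (e.g.\ the diffuse masa $\LL^\infty[0,1]\subset\mathbf B(\LL^2[0,1])$).

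\emph{The centralizer step has no mechanism.} Nothing about $E$ could place $v$ in the centralizer of $\psi$. No faithful state ever has a non-unitary isometry in its centralizer, so that step is the whole difficulty restated, not a route around it.
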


\begin{proof}
Since any von Neumann subalgebra of a finite von Neumann subalgebra $M$ is in $\mathcal{E}_\tau$, where $\tau$ is a faithful normal trace on $M$, one implication is clear. Assume now that $M$ is not finite. By \cite[Section 6]{takesakiConditionalExpectationsNeumann1972} there exists a MASA $A\subset M$ such that there is no normal conditional expectation $M\rightarrow A$. In particular, $A\notin\mathcal{E}_\varphi$, for any nfs $\varphi\in M_*$.
\end{proof}

In the sequel, we denote by $M_\varphi:=\{x\in M\,:\,\sigma^\varphi_t(x)=x\text{ for all }t\in\R\}$ the centraliser of a nfs $\varphi$ on $M$.

\begin{thm}\label{Nocondexpecdense}
Let $M$ be a purely infinite von Neumann algebra. Then, for any nfs $\varphi\in M_*$ such that $M_\varphi$ is infinite dimensional the set $\mathcal{E}_\varphi^c$ is open and dense in $\mathcal{S}(M)$.   
\end{thm}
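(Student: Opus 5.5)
Openness is immediate from Theorem~\ref{thm: marechal same as one lip}: $\mathcal E_\varphi$ is closed in $\mathcal S(M)$, so its complement $\mathcal E_\varphi^c$ is open. The real content is density. I would prove it by showing that every $N\in\mathcal S(M)$ is a Maréchal limit of subalgebras that are \emph{not} $\sigma^\varphi$-invariant. Since $\mathcal E_\varphi^c$ is open, it suffices to approximate elements $N\in\mathcal E_\varphi$. Indeed, the closure of $\mathcal E_\varphi^c$ then contains $\mathcal E_\varphi$, and it trivially contains $\mathcal E_\varphi^c$.

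\emph{Reduction.} Fix $N\in\mathcal E_\varphi$. Since $M_\varphi$ is infinite dimensional, I would pick an infinite family of mutually orthogonal nonzero projections $(p_k)_k$ in $M_\varphi$, with $\varphi(p_k)\to 0$; this is possible because an infinite dimensional von Neumann algebra contains infinitely many orthogonal projections. The idea is that, $M$ being purely infinite, each corner $p_kMp_k$ is again purely infinite. In particular $p_kMp_k$ is not finite, so by Lemma~\ref{LemFinite} (or directly by Takesaki's result) it contains a MASA $A_k\subset p_kMp_k$ without any normal conditional expectation. I would then build the perturbation
$$N_k:=\bigl((1-p_k)N(1-p_k)\bigr)\oplus A_k \quad\text{or rather}\quad N_k:=\{x\in N: x \text{ commutes with } p_k\}p_k^\perp+A_k .$$
The concrete choice is $N_k=(q_kNq_k+\C(1-q_k))\vee A_k$-type algebras, where $q_k=1-p_k\in M_\varphi$. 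The point is that $N_k$ differs from $N$ only on a corner of small $\varphi$-measure.

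\emph{Key steps.} First I would show $N_k\notin\mathcal E_\varphi$. Suppose a $\varphi$-preserving expectation $E$ onto $N_k$ existed. Since $p_k\in N_k\cap M_\varphi$ is central in $N_k$ by construction, compressing $E$ by $p_k$ would give a normal ($\varphi_{p_k}$-preserving) conditional expectation $p_kMp_k\to A_k$, which contradicts the choice of $A_k$. Second I would show $N_k\to N$ in the Maréchal topology, i.e.\ $\Vert\omega|_{N_k}\Vert\to\Vert\omega|_N\Vert$ for all $\omega\in M_*$. Here I would compare both algebras with the compressions $q_kNq_k$. Since $p_k\to0$ strongly (because $\varphi(p_k)\to 0$ and $\varphi$ is faithful), for $x\in N_1$ the elements $q_kxq_k$ converge strongly to $x$. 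This gives the lower semicontinuity $\liminf\Vert\omega|_{N_k}\Vert\ge\Vert\omega|_N\Vert$. For the other direction, any $y\in(N_k)_1$ has the form $y=a+b$ with $b\in p_kMp_k$. Then $|\omega(b)|\le\Vert\omega(p_k\cdot p_k)\Vert\to0$, and $a$ is close to an element of $N$.

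\emph{Main obstacle.} The difficulty is to make $N_k$ a genuine von Neumann algebra that is simultaneously close to $N$ and has $p_k$ central. Indeed, $q_kNq_k$ is not an algebra unless $q_k$ commutes with $N$. To handle this, I would first try replacing $N$ by a nearby expected subalgebra that commutes with $p_k$. Otherwise I would use the Haagerup--Winsløw characterisation of Maréchal convergence via $\liminf/\limsup$ of sets (strong-$*$ limits of bounded sequences) rather than via norms of restrictions. Under that characterisation, it suffices to check two things: every $x\in N$ is a strong-$*$ limit of elements $x_k\in N_k$, and every weak cluster point of a bounded sequence from $N_k$ lies in $N$. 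The second holds because the $A_k$-parts vanish strongly and the remaining parts lie in algebras generated by $N$ and $q_k$. I would expect this step, namely choosing $N_k$ so that it is an algebra and the upper limit stays inside $N$, to require the most care. If the direct construction fails, I would replace $A_k$ by a non-expected subalgebra of $p_kMp_k$ and take $N_k=(N\cap\{p_k\}')q_k\oplus A_k$, after first choosing the $p_k$ inside $N'\cap M_\varphi$ when that is possible.
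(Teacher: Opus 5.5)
Your outline matches the paper's strategy: openness from Theorem~\ref{thm: marechal same as one lip}, small projections $p_k$, a subalgebra $A_k\subset p_kMp_k$ with no normal expectation, and $p_k$ central in $N_k$ to rule out expectations. There is, however, a genuine gap exactly where you place the main obstacle: you never exhibit a von Neumann subalgebra $N_k$ with $p_k$ central and $N_k\to N$, and none of your candidates does this.
\begin{itemize}
\item $q_kNq_k$ is not an algebra, as you note yourself.
\item $(N\cap\{p_k\}')q_k\oplus A_k$ is an algebra, but it only retains the relative commutant $N\cap\{p_k\}'$. For $p_k\notin N'$ this can be far smaller than $N$: it is $\C$ if $N=\C p\oplus\C(1-p)$ and $p_kp\neq pp_k$. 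Choosing $p_k\in N'\cap M_\varphi$ is impossible as soon as, say, $N'\cap M=\C$.
\item The algebra generated by $q_kNq_k$ and $A_k$ does make $p_k$ central. But then nothing supports your upper estimate (``$a$ is close to an element of $N$''), since that algebra is in general not a copy of $N$ and can be strictly larger (e.g.\ non-commutative when $N$ is abelian).
\item The $\liminf/\limsup$ reformulation does not help, because the problem is constructing the algebras, not testing convergence.
\end{itemize}
Note that the paper's own proof of this theorem sets $B_n=A_n\oplus q_nBq_n$ and calls it a unital subalgebra. It therefore glosses over precisely this point, and its choice $y_n=p_n\oplus yq_n\in B_n'$ suffers from the same defect.

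The missing idea is to transport $N$ into the corner by isometries instead of compressing it. Suppose $v_k\in M$ satisfy $v_k^*v_k=1$, $v_kv_k^*=1-p_k$ and $v_k\to 1$ strongly$^*$, and put $N_k:=v_kNv_k^*\oplus A_k$. Since $\mathrm{Ad}\,v_k$ is an injective normal $*$-homomorphism, $v_kNv_k^*$ is a von Neumann algebra with unit $1-p_k$. So $N_k$ is a unital von Neumann subalgebra in which $p_k$ is central, and
$$\Vert\omega|_{N_k}\Vert=\Vert(\omega\circ\mathrm{Ad}\,v_k)|_N\Vert+\Vert\omega|_{A_k}\Vert\longrightarrow\Vert\omega|_N\Vert,$$
because $\omega\circ\mathrm{Ad}\,v_k\to\omega$ in norm and $\Vert\omega|_{A_k}\Vert\leq\Vert\omega|_{p_kMp_k}\Vert\to 0$.

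To obtain such $v_k$, drop the requirement $p_k\in M_\varphi$. Your own compression argument never uses it: $A_k$ admits no normal conditional expectation at all, so $N_k\notin\mathrm{Exp}(M)\supseteq\mathcal E_\varphi$. Since $M$ is purely infinite, hence properly infinite, $M\simeq\mathbf{B}(\LL^2(\R_+))\bar{\otimes}P$. Then $p_k=1_{(0,1/k)}\otimes 1$ and $v_k=S_{1/k}\otimes 1$, with $S_t$ the right shift, do the job. This is exactly the construction in the proof of Theorem~\ref{thm: no condex is dense in finite}. It gives density of $\mathrm{Exp}(M)^c\subseteq\mathcal E_\varphi^c$ for every non-finite $M$, so the density assertion holds even without the hypothesis on $M_\varphi$.
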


\begin{proof}
We already proved that $\mathcal{E}_\varphi^c$ is open (see Theorem~\ref{thm: marechal same as one lip}). Since $\mathcal{S}(M)$ is a Polish space, it is enough to show that if $B\in \mathcal{S}(M)$, then there exists $B_n\in \mathcal{E}_\varphi^c$ such that $B_n\rightarrow B$.

Let $q_n\in M_\varphi$, $n\in \N$, be an increasing sequence of projections such that $q_n\rightarrow 1$ strongly. Let $p_n=1-q_n$, $n\in\N$. Let $\varphi_{p_n}(.)=\frac{\varphi(p_n . p_n)}{\varphi(p_n)}$ and 
$\varphi_{q_n}(.)=\frac{\varphi(q_n . q_n)}{\varphi(q_n)}$ denote the natural reduced faithful normal states on the von Neumann algebras $p_nMp_n$ and $q_nMq_n$ respectively. 

Since $M$ is purely infinite, $p_nMp_n$ is not finite and, by Lemma \ref{LemFinite}, there exists $A_n\subset p_nMp_n$ such that $A_n\in \mathcal{E}_{\varphi_{p_n}}^c$. Consider now $B_n=A_n\oplus q_nBq_n$, which is a unital subalgebra of $M$. It can be checked that $B_n\in \mathcal{E}_\varphi^c$. To see this, first we note that since $p_n\in M_{\varphi}$, the modular group of $\varphi_{p_n}$ is given by $\sigma_t^{\varphi_{p_n}}(.)=p_n\sigma_t^{\varphi}(.)p_n$, for every $n\in\N$ and $t\in\R$. So, suppose now that $B_n\in \mathcal{E}_{\varphi}$, then $B_n$ is invariant under the action of the automorphism group $\sigma_t^{\varphi}$. In particular then, $\sigma_t^\varphi(A_n)\subseteq B_n$ but since $p_nq_n=0$, we have that $A_n$ is invariant under the action of the automorphism group $\sigma_t^{\varphi_{p_n}}$ on $p_nMp_n$, implying that $A_n\in \mathcal{E}_{\varphi_{p_n}}$, which is a contradiction. Hence, we have $B_n\in \mathcal{E}_\varphi^c$ for all $n$.

Finally, let us show that $B_n\rightarrow B$. Let $\omega\in M_*$, take $x\in B_1$ such that $\Vert\omega\vert_B\Vert=\vert\omega(x)\vert$ and define $x_n := p_n\oplus q_nxq_n\in B_n$. Then, $x_n\rightarrow x$ strong$^\ast$ so $$\Vert\omega\vert_B\Vert=\vert\omega(x)\vert=\lim\vert\omega(x_n)\vert\leq\liminf\Vert\omega\vert_{B_n}\Vert.$$

Next, let $x_n\in (B_n)_1$ be such that $\Vert\omega\vert_{B_n}\Vert=\vert\omega(x_n)\vert$. Let $\phi\,:\,\N\rightarrow\N$ be an increasing map such that $\vert\omega(x_{\phi(n)})\vert\rightarrow\limsup\Vert\omega\vert_{B_{n}}\Vert$. Since $M_1$ is weakly compact, we may and will assume that $x_{\phi(n)}\rightarrow x\in M_1$ weakly. Fix $y\in B'$ and define $y_n = p_n\oplus yq_n\in B_n^\prime$, then $y_n\rightarrow y$ in the strong$^\ast$ topology. Since the multiplication $\mathbf{B}(H)_1\times \mathbf{B}(H)_1\rightarrow \mathbf{B}(H)_1$ is both strong$\times$weak to weak and weak$\times$strong to weak continuous, it follows that $x_ny_n\rightarrow xy$ and $y_nx_n\rightarrow yx$ weakly. Hence, $xy=yx$ and since this holds for all $y\in B'$ we deduce that $x\in B''=B$. It implies that:
$$
\Vert\omega\vert_B\Vert\geq\vert\omega(x)\vert=\lim\vert\omega(x_{\phi(n)})\vert=\limsup\Vert\omega\vert_{B_n}\Vert.\qedhere
$$
\end{proof}

\begin{remark}\label{remarkoncentralizerhypothesis}
Note that Theorem \ref{Nocondexpecdense} is valid for all type $\rm{III}_\lambda$ factors for $\lambda\in [0,1)$ and for all faithful normal states \cite{Co73}. It is also valid for type $\rm{III}_1$ factors with faithful normal states having infinite dimensional centralisers. However, it is known that the generic state on a type ${\rm III}_1$ factor is faithful and has trivial centraliser \cite[Theorem A]{MV24}.\end{remark}

\begin{thm}\label{thm: no condex is dense in finite}
Let $M$ be a von Neumann algebra with separable predual. Put
$$
        \mathrm{Exp}(M)
        =
        \{N\in \mathcal{S}(M):\text{ there exists a normal conditional expectation }
        M\to N\}.
$$
The following are equivalent:
\begin{enumerate}
    \item $\mathrm{Exp}(M)^c$ is dense in $\mathcal{S}(M)$,
    \item $M$ is not finite.
    \end{enumerate}
\end{thm}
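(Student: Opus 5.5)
The implication (1)$\Rightarrow$(2) is handled by contraposition. If $M$ is finite, fix a faithful normal trace $\tau$. Every $N\in\mathcal S(M)$ then admits the $\tau$-preserving conditional expectation, so $\mathrm{Exp}(M)=\mathcal S(M)$. Hence $\mathrm{Exp}(M)^c=\emptyset$, which is not dense since $\mathcal S(M)\neq\emptyset$.

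For (2)$\Rightarrow$(1), assume $M$ is not finite. I would adapt the construction in the proof of Theorem~\ref{Nocondexpecdense}, replacing ``no $\varphi$-preserving expectation'' by ``no normal expectation at all''. The key input is the one from Lemma~\ref{LemFinite}: by \cite[Section 6]{takesakiConditionalExpectationsNeumann1972}, any non-finite von Neumann algebra contains a MASA with no normal conditional expectation onto it. Since $M$ is not finite, there is a nonzero properly infinite projection $e\in M$, and (in the separable-predual setting) a decreasing sequence of projections $p_n$ with $p_n\to 0$ strongly and each $p_nMp_n$ not finite. To build these, I would first reduce to the central support of $e$. There the relevant summand is properly infinite, so one can halve repeatedly: $1\sim 1-f$ with $f$ infinite. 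I would choose the $p_n$ to be properly infinite, decreasing and tending to $0$ strongly, possibly with $p_n$ taken inside $eMe$ and compressed against a fixed sequence dense in $\LL^2$. Set $q_n=1-p_n$.

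By the cited result, choose $A_n\subset p_nMp_n$ with no normal conditional expectation $p_nMp_n\to A_n$. Given $B\in\mathcal S(M)$, put $B_n=A_n\oplus q_nBq_n$. Here $q_nBq_n$ should be read as a von Neumann algebra on $q_n$; to be safe I would instead use $B_n=A_n\oplus (\{q_n\}'\cap\ldots)$ exactly as in Theorem~\ref{Nocondexpecdense}, treating $q_nBq_n$ identically. Then $B_n\notin\mathrm{Exp}(M)$. Indeed, if $E:M\to B_n$ were a normal conditional expectation, then $p_n\in B_n$, and by bimodularity $x\mapsto p_nE(x)p_n=E(p_nxp_n)$ restricted to $p_nMp_n$ would be a normal conditional expectation onto $p_nB_np_n=A_n$, a contradiction.

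The convergence $B_n\to B$ would follow verbatim from the two-sided estimate in the proof of Theorem~\ref{Nocondexpecdense}. For the lower bound, take $x_n=p_n\oplus q_nxq_n\to x$ strong$^*$. For the upper bound, take a weak cluster point of norming elements and commutant elements $y_n=p_n\oplus yq_n\in B_n'$. That argument only used $q_n\to1$ strongly, so it transfers. Both this step and the bimodularity argument silently need $q_n$ to commute with $B$ so that $q_nBq_n$ is an algebra, and this is the main obstacle. In Theorem~\ref{Nocondexpecdense} the same issue is present, and I would handle it the same way the paper does there. If that fails, I would replace $q_nBq_n$ by the von Neumann algebra generated by $q_nBq_n$ on $q_n\mathcal H$ and recheck the commutant argument. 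The second delicate point is producing the $p_n\to0$ with $p_nMp_n$ non-finite when $M$ has a finite summand. I would handle it by working in the properly infinite summand $zM$ and letting $p_n\le z$, with $q_n\ge 1-z$.
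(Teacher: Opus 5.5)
Your proof of (1)$\Rightarrow$(2), your choice of $A_n$, and your argument that $p_n$ central in $B_n$ would yield a normal expectation $p_nMp_n\to A_n$ all match the paper. The genuine gap is the one you flag yourself and leave open. $B_n=A_n\oplus q_nBq_n$ is in general not a von Neumann algebra, because $q_nBq_n$ is not closed under products unless $q_n\in B'$. Since $B\in\mathcal S(M)$ is arbitrary, this cannot be arranged by choosing the $p_n$ cleverly: if $B'\cap M=\C 1$, no nontrivial projection of $M$ commutes with $B$. Deferring to Theorem~\ref{Nocondexpecdense} does not help, because that proof uses the same compression without addressing the point; there $y_n=p_n\oplus yq_n$ need not even leave $q_n\mathcal H$ invariant. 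Your fallback also fails. The von Neumann algebra $C_n$ generated by $q_nBq_n$ can be as large as $q_nMq_n$ (think of a MASA in $\mathbf B(\mathcal H)$ and $q_n$ in general position), and then $A_n\oplus C_n\to M\neq B$. At the level of the argument, $q_nyq_n$ with $y\in B'$ does not commute with $q_nbq_n$, so the upper bound $\limsup\|\omega|_{B_n}\|\le\|\omega|_B\|$ is lost.

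The missing idea is to transport $B$ into the corner by conjugation rather than compression. The paper takes a central $z$ with $zM$ properly infinite and writes $zM\simeq\mathbf B(\LL^2(\R_+))\bar\otimes P$. It sets $p_n=1_{(0,1/n)}\otimes 1_P$, so that $p_nMp_n$ is properly infinite and $p_n\to 0$ strongly. It then puts $v_n=(1-z)+S_{1/n}\otimes 1_P$, where $S_t$ is the right shift by $t$. Then $v_n^*v_n=1$, $v_nv_n^*=1-p_n$, and $v_n,v_n^*\to 1$ strongly. The algebra $v_nBv_n^*$ is a genuine von Neumann algebra with unit $1-p_n$, being the image of $B$ under the normal injective $*$-homomorphism $\operatorname{Ad}v_n$. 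Hence $N_n=v_nBv_n^*\oplus A_n$ is a unital subalgebra with $p_n$ central, and your non-expectation argument applies verbatim. Convergence then needs no commutant argument: $\|\omega|_{N_n}\|=\|(\omega\circ\operatorname{Ad}v_n)|_B\|+\|\omega|_{A_n}\|$. Here $\omega\circ\operatorname{Ad}v_n\to\omega$ in norm because $v_n,v_n^*\to 1$ strongly, and $\|\omega|_{A_n}\|\le\|\omega|_{p_nMp_n}\|\to 0$. The specific model is not essential; isometries with range $1-p_n$ converging strongly$^*$ to $1$ can also be built from a halving decomposition $z=\sum_k e_k$ with $e_k\sim z$. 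Some such device, however, is indispensable.
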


\begin{proof}$(1)\Rightarrow(2)$ is clear. Indeed, if $M$ is finite then $\mathrm{Exp}(M)^c=\emptyset$ is not dense. Let us show $(2)\Rightarrow(1)$. If $M$ is not finite then there is a nonzero central projection $z\in \mathcal{Z}(M)$ such that
$zM$ is properly infinite. Since $M$ has separable predual, $zM$ is
countably decomposable.
Since $zM$ is properly infinite and countably decomposable, there exists a von Neumann algebra $P$ such that $zM\simeq \textbf{B}({\rm L}^2(\mathbb R_+))\otimes P$. Under this identification, define $p_n:=1_{(0,1/n)}\otimes 1_P$ and $q_n:=z-p_n=1_{(1/n,\infty)}\otimes 1_P$. Let $S_t$ be the right-shift isometry on ${\rm L}^2(\mathbb R_+)$,
$$
        (S_t\xi)(s)=
        \begin{cases}
        0, & 0<s<t,\\
        \xi(s-t), & s\ge t.
        \end{cases}
$$
Then $S_t^*S_t=1$ and $S_tS_t^*=1_{(t,\infty)}$. Moreover, $S_t\to1$, $S_t^*\to1$ strongly as $t\to0$. Put $w_n:=S_{1/n}\otimes1_P\in zM$ then $w_n^*w_n=z$, $w_nw_n^*=z-p_n$ and $w_n\to z$, $w_n^*\to z$ strongly. Now define $v_n:=(1-z)+w_n\in M$ then $v_n^*v_n=1$, $v_nv_n^*=1-p_n$ and $v_n\to1$, $v_n^*\to1$ strongly.

\medskip

\noindent Since each corner $p_nMp_n$ is properly infinite, hence non-finite,
we may choose, by \cite[Section 6]{takesakiConditionalExpectationsNeumann1972}, $A_n\subset p_nMp_n$ such that there is no normal conditional expectation $p_nMp_n\to A_n$. Now fix $N\in \mathcal{S}(M)$. Define $q_n:=1-p_n$ and,
$$
        N_n:=v_nNv_n^*\oplus A_n
        \subset q_nMq_n\oplus p_nMp_n=M.
$$
Thus $N_n$ is a unital von Neumann subalgebra of $M$. We claim first that $N_n\notin \mathrm{Exp}(M)$ for all $n$. Indeed, suppose that there were a faithful normal conditional expectation $E:M\to N_n$. Since $p_n\in \mathcal{Z}(N_n)$, the compression
$$E_n:p_nMp_n\to p_nN_np_n=A_n,\quad E_n(x):=p_nE(x)p_n$$
is a normal conditional expectation. This contradicts the choice of $A_n$.
Therefore $N_n\notin \mathrm{Exp}(M)$. It remains to prove that $N_n\to N$. Fix $\omega\in M_*$. Since $N_n$ is the direct sum of its two central corners
$q_nN_nq_n=v_nNv_n^*$ and $p_nN_np_n=A_n$, we have
$$
        \|\omega|_{N_n}\|
        =
        \|\omega|_{v_nNv_n^*}\|
        +
        \|\omega|_{A_n}\|.
$$
For the first term, $\|\omega|_{v_nNv_n^*}\| = \|(\omega\circ\operatorname{Ad}v_n)|_N\|$ where, $(\omega\circ\operatorname{Ad}v_n)(x)=\omega(v_nxv_n^*)$. Since $v_n\to1$ and $v_n^*\to1$ strongly, and since $\omega$ is normal, $\omega\circ\operatorname{Ad}v_n\to \omega$ in norm in $M_*$. Hence $\|\omega|_{v_nNv_n^*}\|=\|(\omega\circ\operatorname{Ad}v_n)|_N\|\to \|\omega|_N\|$. For the second term, since $A_n\subset p_nMp_n$, we have $\|\omega|_{A_n}\|\leq\|\omega|_{p_nMp_n}\|$. Because $p_n\downarrow0$ strongly and $\omega$ is normal, $\|\omega|_{p_nMp_n}\|\to0$. Indeed, if $\omega\geq0$, then $\|\omega|_{p_nMp_n}\|=\omega(p_n)\to0$, and the general case follows by decomposing $\omega$ as a linear combination
of positive normal functionals. Therefore $\|\omega|_{N_n}\|=\|\omega|_{v_nNv_n^*}\|+\|\omega|_{A_n}\|\to\|\omega|_N\|$. Thus $N_n\to N$ in $\mathcal{S}(M)$.\end{proof}

\begin{remark}As the set $\mathrm{Exp}(M)$ is clearly analytic (in the sense of descriptive set theory, see 
\cite[Chapter~3]{kechrisClassicalDescriptiveSet1995}), it is natural to ask whether it is a Borel set when $M$ is not finite, 
and whether it is generic.
\end{remark}

\bibliographystyle{alphaurl}
\bibliography{biblio}

\end{document}